\theoremstyle{plain}
\newtheorem{thm}{Theorem}[section]  
\newtheorem{lem}[thm]{Lemma}
\newtheorem{prop}[thm]{Proposition}
\theoremstyle{definition}
\newtheorem{dfn}{Definition}[section]
\theoremstyle{remark}
\newtheorem{rem}{Remark}[section]
\newcommand\DN{\newcommand}
\DN\lref[1]{Lemma~\ref{#1}}
\DN\tref[1]{Theorem~\ref{#1}}
\DN\pref[1]{Proposition~\ref{#1}}
\DN\sref[1]{Section~\ref{#1}}
\DN\dref[1]{Definition~\ref{#1}}
\DN\rref[1]{Remark~\ref{#1}} 
\DN\correff[1]{Corollary~\ref{#1}}
\DN\eref[1]{Example~\ref{#1}}
\DN\map[3]{#1\!:\!#2\!\to\!#3}
\DN\limi[1]{\lim_{#1\to\infty}} 	
\DN\limz[1]{\lim_{#1\to 0}} 	
\DN\lsupi[1]{\limsup_{#1\to\infty}} 	
\DN\linfi[1]{\liminf_{#1\to\infty}} 	
\DN\lsupz[1]{\limsup_{#1\to 0}} 	
\DN\linfz[1]{\liminf_{#1\to 0}} 	
\DN\PD[2]{\frac{\partial#1}{\partial#2}}
\numberwithin{equation}{section}
\newcounter{Const} \setcounter{Const}{0}
\def\Ct{\refstepcounter{Const}c_{\theConst}}
\def\cref#1{c_{\ref{#1}}}
\title{Interacting Brownian motions \\ in infinite dimensions \\ with 
7logarithmic interaction potentials II: \\ Airy random point field }
\author{Hirofumi Osada 
\\Faculty of Mathematics, Kyushu University, \\ 
Fukuoka 819-0395, Japan
} 
\begin{document}

\maketitle

\footnote{{\bf MSC 2000:}  60J60\quad  60K35 \quad  82B21 \quad  82C22

\noindent 
{\bf Keyword:} Interacting Brownian particles, Random matrices, 
  Coulomb potentials,   Infinitely many particle systems, Diffusions, 
Airy random point field, quasi-Gibbs property 
\noindent \\
{\bf E-mail:} osada@math.kyushu-u.ac.jp  \quad 
{\bf Phone and Fax:} 0081-92-802-4489}
\begin{abstract}
We give a new sufficient condition of the quasi-Gibbs property. 
This result is a refinement of one given in a previous paper (\cite{o.rm}), 
and will be used in a forth coming paper to prove 
the quasi-Gibbs property of Airy random point fields (RPFs) and 
other RPFs appearing under soft-edge scaling. 
The quasi-Gibbs property of RPFs is one of the key ingredients 
to solve the associated infinite-dimensional stochastic differential equation (ISDE). 
Because of the divergence of the free potentials and 
the interactions of the finite particle approximation under soft-edge scaling, 
the result of the previous paper excludes the Airy RPFs, 
although Airy RPFs are the most significant RPFs 
appearing in random matrix theory. 
We will use the result of the present paper to solve the ISDE 
for which the unlabeled equilibrium state is the $\mathrm{Airy}_{\beta }$  RPF with $ \beta = 1,2,4 $.  \end{abstract}

\DN\Aq{\mathsf{A}_{rs,l}^{n,q}}

\DN\Srm{\SSS _{r}^{m}}
\DN\Srhat{\tilde{ S }_r}
\DN\SSrhat{\tilde{\SSS }_r}
\DN\Srhatn{\tilde{ S }_r^n}
\DN\SSrhatn{\tilde{\SSS }_r^n}

\DN\Sr{ S _{r}} 
\DN\Ss{ S _{s}} 
\DN\Srs{ S _{rs}} 
\DN\Srinfty{ S _{r\infty}} 
\DN\Ssinfty{ S _{s\infty}} 
\DN\Sti{\tilde{ S }}
\DN\Sri{ S _{r\infty}}
\DN\Ssi{ S _{s\infty}}
\DN\Stri{\Sti _{r\infty}}
\DN\Dr{\Sti _r} 
\DN\Ds{\Sti _s} 
\DN\Drs{\Sti _{rs}} 
 \DN\Rr{\Sti _{1r}} 
 \DN\Rrr{ S _{r(r+1)}}  
 \DN\Rrrr{ S _{(r+1)\infty }}

\DN\n{\mathsf{n}}
\DN\nN{N_{\mathsf{n}}}

  \DN\4{\sum_{\y _i \in \Srs }}
\DN\9{\sum_{\y _i \in \Srs } \frac{1}{{ y }_i^{\ell } }}

\DN\vellrs{\mathsf{v}_{\ell ,rs}}
\DN\vellri{\mathsf{v}_{\ell ,r\infty }}
\DN\vellsi{\mathsf{v}_{\ell ,s\infty }}
\DN\vellbr{\mathsf{v}_{\ell ,\br }}
\DN\vellbs{\mathsf{v}_{\ell ,\bs }}
\DN\vellbri{\mathsf{v}_{\ell ,\br \infty }}
\DN\vellbsi{\mathsf{v}_{\ell ,\bs \infty }}

\DN\DeltaN{\Delta ^{\n ,n}}

\DN\supN{\sup_{\n \in \mathbb{N}   }}
\DN\supP{\sup_{\mathsf{p} \in \mathbb{N}   }}
\DN\infN{\inf_{\n \in \mathbb{N}   }}
\DN\sups{\sup_{r< s\in  \mathbb{N}   }}
\DN\lsupN{\limsup_{\n \to\infty}}
\DN\linfN{\liminf_{\n \to\infty}}
\DN\capN{\bigcap_{\n \in \mathbb{N}   }}
\DN\cupN{\bigcup_{\n \in \mathbb{N}   }}

\DN\limsupN{\limsup_{\n \to \infty }}
\DN\sumN[1]{\sum_{#1 = 0}^{\n -1}}

\DN\x{x} 
\DN\y{y}
\DN\xx{\mathsf{x}} 

\DN\p{\mathsf{p}}
\DN\q{\mathsf{q}}

\DN\HH{\mathsf{H}}
\DN\Hrk{\HH _{r,k}}
\DN\Hsl{\HH _{s,l}}

\DN\PhiN{\Phi ^{\n } }

\DN\PsiN{\Psi ^{\n }}   
\DN\PsiNr{\PsiN _{r}} 
\DN\PsiNrs{\PsiN _{r,rs}} 
\DN\PsiNrstu{\PsiN _{rs,tu}} 
\DN\PsiNrsst{\PsiN _{rs,st}} 
\DN\PsiNrst{\PsiN _{r,st}} 
\DN\PsiNrsi{\PsiN _{r,s\infty }} 

\DN\wPsiN{\widetilde{\Psi }^{\n }}   
\DN\wPsiNr{\wPsiN _{r}} 
\DN\wPsiNrs{\wPsiN _{r,rs}} 
\DN\wPsiNrstu{\wPsiN _{rs,tu}} 
\DN\wPsiNrst{\wPsiN _{r,st}} 
\DN\wPsiNrsi{\wPsiN _{r,s\infty }} 
\DN\wPsiNssi{\wPsiN _{rs,s\infty }} 
\DN\PsiNssi{\PsiN _{rs,s\infty }} 
\DN\wPsiNsr{\wPsiN _{rs* }} 
\DN\wPsiNrsst{\wPsiN _{rs,st}}

\DN\ABN{_{r,k,\mathsf{s},rs}^{\n ,m}}
\DN\ABs{_{r,k,\mathsf{s},rs}^{m}}
\DN\ABss{_{r,k,\pi _{\Srs }(\mathsf{s}),rs}^{m}}

   \DN\mukNm{\mu ^{\n ,m}_{r,k}} 
   \DN\murk{\mu _{r,k}^{m}} 
   \DN\murky{\mu _{r,k,\mathsf{s}}^m} 
   \DN\mury{\mu _{r,\mathsf{s}}^m} 
   \DN\muN{\mu ^{\n }} 
\DN\muNl{\mu^{\n ,n}_{l}}
   \DN\muA{\mu _{r,k,\mathsf{s}}^{ m }} 
\DN\muABs{\mu \ABs }
\DN\muABN{\mu \ABN }

\DN\murmy{\mu _{r,k,\mathsf{s}}^m } 
\DN\murm{\mu _{r}^m } 
\DN\murmch{\check{\mu } _{r,k,\mathsf{s}}^m}
\DN\murmcha{\check{\mu } _{r,k,\mathsf{s},a_0 }^m}

\DN\rN{\rho _{\n }^{n}} 
\DN\rNone{\rho _{\n }^{1}} 
\DN\LAB{L^{\infty }(\SSS , \Lambda )}
\DN\LABone{L^{1}(\SSS , \Lambda )}
\DN\skABs{\sigma \ABs }
\DN\skABss{\sigma \ABss }
\DN\st{\widetilde{\sigma}} 

\DN\sAym{\sigma _{r,k,\mathsf{s}}^{ m }} 
\DN\sABN{\sigma \ABN }

\DN\ab{r,rs}
\DN\tC{\tau _{\ab }^{\n }}
\DN\tCNtilde{\widetilde{\tau }_{\ab }^{\n } (\mathsf{x},\mathsf{s})}
\DN\tCN{\tau _{\ab }^{\n } (\mathsf{x},\mathsf{s})}
\DN\tCNNtilde{\widetilde{\tau }_{\ab }^{\n } (\mathsf{x}',\mathsf{s})}
\DN\tCNN{\tau _{\ab }^{\n } (\mathsf{x}',\mathsf{s})}

\DN\piAc{\pi _{\SrNstar }}

\DN\SSS{\mathsf{S}}
\DN\Lm{L^2(\SSS , \mu )}
\DN\Ln{L^2(\SSS , \nu )}
\DN\LmN{L^2(\SSS , \muN )}
\DN\LmNone{L^1(\SSS , \muN )}
\DN\Lmk{L^2(\SSS , \murk )}

\DN\ak{k} 
\DN\br{b_r }\DN\bs{b_s }\DN\bt{b_t }
\DN\brr{b_{r+1}}
\DN\bsr{b_s }\DN\btr{b_t }
\DN\brrr{r }\DN\bsrr{s }\DN\btrr{t }

\DN\mA{\Lambda (d\mathsf{x})} 
\DN\mAN{\Lambda (d\mathsf{x})}

\DN\kk{\ell }
\DN\ellell{{\ell _0}}
\DN\kK{ 1 \le \ell < \ellell }

\DN\mmi{\mathfrak{m}^{\n }_{\infty}}
\DN\mms{\mathfrak{m}^{\n }_{s}}
\DN\mmr{\mathfrak{m}^{\n }_{r}}
\DN\mmt{\mathfrak{m}^{\n }_{t}}
\DN\mmu{\mathfrak{m}^{\n }_{u}}
\DN\mmmm{F^{\n }_{rs}(\mathsf{y})} 
\DN\mmm{\beta \big\{ \4 \frac{1}{ x _i } \big\} + (\mmr -\mms ) } 

\DN\xr{\sum_{x_i\in\Sr } x_i }
\DN\xrdash{\sum_{x'_i\in\Sr } x'_i }
\DN\xrs{\sum_{x_i\in\Srs } x_i }
\DN\xsi{\sum_{x_i\in\Ssi } x_i }

\DN\mairybeta{m _{\mathrm{Ai},\beta }}
\DN\muairybeta{\mu _{\mathrm{Ai},\beta }}


\section{Introduction} \label{s:1} 
Let $ \beta = 1,2,4$. 
The $\mathrm{Airy}_{\beta }$  random point field (RPF), denoted by 
$ \muairybeta $, is a probability measure 
on the configuration space over $ \mathbb{R} $, for which the 
$ n$-correlation function $ \rho _{\mathrm{Ai } ,2}^{n}$ is given by 
\begin{align}\label{:a11}&
\rho _{\mathrm{Ai } ,2}^{n} (x_1,\ldots,x_n) 
= \det  [K_{\mathrm{Ai } , 2}(x_i,x_j) ]_{i,j=1}^{n} 
\quad \text{ for }\beta = 2
.\end{align}
Here $ K_{{\mathrm{Ai } },2}(x,y)$ is a continuous kernel on $ \mathbb{R}^2$ defined by 
\begin{align*}&
K_{{\mathrm{Ai } },2}(x,y) =  
\frac{{\mathrm{Ai } }(x) {\mathrm{Ai } }'(y)-{\mathrm{Ai } }'(x) {\mathrm{Ai } }(y)}{x-y} \quad (x\not=y) 
,\end{align*}
where we set ${\mathrm{Ai } }'(x)=d {\mathrm{Ai } }(x)/dx$ with ${\mathrm{Ai } }(\cdot)$ 
denoting the Airy function 
\begin{align}\label{:11p}&
{\mathrm{Ai } }(z) = \frac{1}{2 \pi} \int_{\mathbb{R}} dk \,
e^{i(z k+k^3/3)},
\quad z\in \mathbb{C} 
.\end{align}
The correlation functions of $\mathrm{Airy}_{\beta }$ RPFs for $ \beta = 1,4 $ 
are given similarly by using the quaternion determinant or Pfaffians 
(see \cite{agz}, \cite{mehta}, \cite{for}). 
 
It is well known that $ \muairybeta  $ results in the thermodynamic limit 
of the distributions for the Gaussian ensembles ($ \beta = 1,2,4 $). 
Indeed, the distribution of eigenvalues of the Gaussian ensembles 
with size $\n \times \n $ is given by
\begin{equation}
m_{\mathrm{Gauss},\beta }^{\n }(d\mathbf{\x }_{\n })= 
\frac{1}{Z}\prod_{i<j}^{\n }|x_i-x_j|^\beta 
\exp\left\{-\frac{\beta}{4}\sum_{i=1}^{\n } |x_i|^2 \right\}d\mathbf{\x }_{\n },
\label{:11z}
\end{equation}
where $ \mathbf{\x }_{\n } = (x_1,\ldots,x_{\n }) \in \mathbb{R}^{\n }$. 
Here $\beta=1, 2$ and $4$ correspond respectively to the Gaussian 
orthogonal (GOE), 
unitary (GUE), and symplectic (GSE) ensembles. 
Thus, the probability density coincides with the Boltzmann factor 
for  log-gas systems at three special values of the inverse temperature, i.e., 
$\beta=1, 2$ and $4$. 

Let $ \mu _{\mathrm{Gauss},\beta }^{\n }  $ 
be the distribution of $ \n ^{-1} \sum \delta_{x_i} $ 
under $ m_{\mathrm{Gauss},\beta }^{\n }(d\mathbf{\x }_{\n }) $. 
Then the celebrated semi-circle law states that 
$ \mu _{\mathrm{Gauss},\beta }^{\n } $ converge to the nonrandom 
$ \sigma (x) dx $ weakly in the space of Radon measures  
over $ \mathbb{R } $ endowed with the vague topology. Here 
\begin{align}\label{:11a}&
\sigma (x) =  \frac{1}{2\pi }1_{[-2,2] } (x)  \sqrt{4 - x^2 }
.\end{align}

There exist two typical thermodynamic scalings in \eqref{:11z}, called 
bulk and soft-edge. 
The former (centered at the origin) is given by the correspondence 
$ x \mapsto x/\sqrt{\n }$, which yields the RPF $ \mu^{\n }_{\mathrm{bulk},\beta } $ 
with labeled density $ m_{\mathrm{bulk},\beta }^{\n } $ such that 
\begin{align}\label{:11b}& 
m_{\mathrm{bulk},\beta }^{\n }(d\mathbf{\x }_{\n })= 
\frac{1}{Z}\prod_{i<j}^{\n }|x_i-x_j|^\beta 
\exp\left\{-\frac{\beta}{4\n }\sum_{i=1}^{\n } |x_i|^2 \right\}d\mathbf{\x }_{\n },
\end{align}
and  $ \mu^{\n }_{\mathrm{bulk},\beta } $ converges weakly to 
 $ \mu _{\mathrm{bulk},\beta } $, the $ \text{Sine}_{\beta }$ RPF. 
The latter, in contrast, is centered at $ 2\sqrt{\n }$ given by the correspondence 
$ x \mapsto 2\sqrt{\n } + x \n ^{-1/6}$ with labeled density 
$ \mairybeta ^{\n } $ such that 
\begin{align}\label{:11c}&
\mairybeta ^{\n }(d\mathbf{\x }_{\n })=\frac{1}{Z}\prod_{i<j}^{\n }   |x_i-x_j|^\beta
\exp\bigg\{-\frac{\beta}{4}\sum_{i=1}^{\n } |2\sqrt{\n }+\n ^{-1/6}x_i|^2 \bigg\}
.\end{align}
The Airy RPF $ \muairybeta $ is the weak limit of 
 $ \muairybeta ^{\n }$ given by $ \mairybeta ^{\n }$ as $ \n \to \infty $. 
The finite particle approximation $ \{  \muairybeta ^{\n } \} $ will be used 
in a forth-coming paper to prove the quasi-Gibbs property for $ \muairybeta $.

\bigskip

Interacting Brownian motions (IBMs) in infinite dimensions 
 are diffusions $ \mathbf{X}_t = (X_t^i)_{i\in \mathbb{Z}} $ consisting of infinitely many particles moving in $  \mathbb{R}^{d}  $ with the effect of the external force coming from a self-potential $ \map{\Phi }{ \mathbb{R}^{d}  }{ \mathbb{R} \cup \{ \infty \}}$ and that of the mutual interaction coming from an interacting potential 
 $ \map{\Psi }{ \mathbb{R}^{d}  \! \times \!  \mathbb{R}^{d}  }{ \mathbb{R} \cup \{ \infty \}} $ such that $ \Psi (x,y) = \Psi (y,x)$. 

Roughly speaking, an IBM is the stochastic dynamics of infinitely many  particles 
described by the infinite-dimensional stochastic differential equation (ISDE) 
of the form 
\begin{align} \label{:11} 
& dX_t^i = dB_t^i - \frac{1}{2} \nabla \Phi (X_t^i) dt 
-\frac{1}{2} 
\sum _{j \in  \mathbb{Z} , j\not = i } \nabla \Psi (X_t^i , X_t^j ) dt \quad 
(i \in  \mathbb{Z} )
.\end{align}
The state space of the process 
$\mathbf{X}_t = (X_t^i)_{i\in \mathbb{Z}}$ is $( \mathbb{R}^{d}  )^{\mathbb{Z}}$ 
by construction. 
Let $\mathsf{X}$ be the configuration-valued process given by 
\begin{align} \label{:12} 
& \mathsf{X}_t = \sum _{i\in  \mathbb{Z} } \delta _{X_t^i}
.\end{align}
Here $ \delta _a $ denotes the delta measure at $ a $ and 
a configuration is a Radon measure consisting of 
a sum of delta measures. 
We call $ \mathbf{X}$ the labeled dynamics and 
$ \mathsf{X}$ the unlabeled dynamics.

The ISDE \eqref{:11} was initiated by Lang \cite{La1}, \cite{La2}, 
who studied the case $ \Phi = 0 $, and 
$ \Psi (x,y) = \Psi (x-y) $, where $ \Psi $ 
is in $ C^3_0( \mathbb{R}^{d}  ) $, superstable and regular 
according to Ruelle \cite{ruelle2}. With the last two assumptions, 
the corresponding unlabeled dynamics $ \mathsf{X}$ has 
Gibbsian equilibrium states. 
See \cite{shiga}, \cite{Fr}, and \cite{T2} for other works 
concerning the SDE \eqref{:11}. 

In \cite{o.dfa}, the unlabeled diffusion was constructed using the Dirichlet form. 
The advantage of this method is that it gives a general and 
simple proof of construction.  
This work was followed by \cite{yoshida}, \cite{ark}, \cite{o.p}, \cite{o.m}, \cite{tane.udf}, \cite{yuu.05}, and others.  In all these, 
except \cite{yuu.05} and some parts of \cite{o.dfa}, 
the equilibrium states are supposed to be Gibbs measures 
with Ruelle's class interaction potentials $ \Psi $. 
Thus, the equilibrium states are described 
by the Dobrushin-Lanford-Ruelle (DLR) equations (see \eqref{:qg5}), 
the usage of which plays a pivotal role in previous works. 

The interaction potentials appearing in random matrix theory become 
logarithmic interaction potentials (2D Coulomb potentials): 
\begin{align} \label{:13} &
\Psi (x,y) = -\beta \log |x-y| , \quad 0 < \beta < \infty 
. \end{align}
Clearly these are not Ruelle's class potentials and 
the DLR equations would make no sense. 

In \cite{o.col}, \cite{o.tp}, \cite{o.rm} and \cite{o.isde}, 
we have developed a general theory 
applicable to log potentials and solved the ISDE \eqref{:11} 
with log interaction potentials. 
The key ingredients are two geometric properties of RPFs such that 
\lq\lq the quasi-Gibbs property'' and \lq\lq the log derivative''. 
Although we checked these for Sine$_{\beta} $ RPFs ($ \beta=1,2,4$) 
and the Ginibre RPF in \cite{o.rm} and \cite{o.isde}, 
the Airy$ _{\beta }$ RPFs remain.

The purpose of this paper is to give a sufficient condition 
for the quasi-Gibbs property applicable to RPFs 
appearing under soft-edge scaling, in particular, the  Airy$ _{\beta }$ RPFs. 
We will do this in the main theorems \tref{l:21} and \tref{l:22}. 

Let us briefly explain the main idea. 
The quasi-Gibbs property is a kind of existence of a locally bounded density 
conditioned outside (see Definition \ref{dfn:1}). We will prove this by uniform estimates of suitable, finite particle approximations. 
This finite particle system is \eqref{:11c} for the  Airy$ _{\beta }$ RPFs.  
Note that the exponent in \eqref{:11c}  is given by 
\begin{align}& \label{:14}
- \frac{\beta}{4}\sum_{i=1}^{\n } |2\sqrt{\n }+\n ^{-1/6}x_i|^2 = 
- \frac{\beta}{4}\sum_{i=1}^{\n } \{ 4{\n } +\n ^{-1/3}|x_i|^2 + 4\n ^{1/3}x_i \}
.\end{align}
The term $ 4{\n } $ can be absorbed in the normalizing constant, and 
the term $ \n ^{-1/3}|x_i|^2 $ can be neglected as $ \n \to \infty $. 
We have to prove, however, a rather precise cancellation between 
$ e^{-\frac{\beta}{4}\sum_{i=1}^{\n } 4\n ^{1/3}x_i }$ and the interaction term 
$ \prod_{i\not=j}^{\n } |x_i-x_j|^{\beta }$. This yields the main difficulty 
for the Airy$ _{\beta }$ RPFs, and other RPFs under soft-edge scaling. 
Note that the term $  4\n ^{1/3}x_i $ is linear in $ x_i $; from this, 
we arrive at the formulation in \eqref{:21o}.  

The organization of the paper is as follows. 
In \sref{s:2}, we describe the set up and state the main results 
(\tref{l:21}, \tref{l:22}). 
\sref{s:3}--\sref{s:5} are devoted to the proof of \tref{l:21}. 
In \sref{s:6}, we give a sufficient condition for \thetag{H.3}, 
which is the most important condition in \tref{l:21}. 
In \sref{s:7}, we prove \tref{l:22}, which is the special case $ d=1$ in \tref{l:21}, 
and we will give a convenient sufficient condition for \thetag{H.3} in this case.

\section{Set-up and main results}\label{s:2} 
Let $ S $ be a closed set in $  \mathbb{R}^{d}  $ such that $ 0 \in  S $ and $ \overline{ S ^{\text{int}}} =  S $, 
where $  S ^{\text{int}} $ means the interior of $  S $. 
Let 
$ \SSS = \{ \mathsf{s} = \sum _i \delta _{s_i}\, ;\, \mathsf{s}(K )< \infty 
\text{ for any compact set } K \} $, where $ \{ s_i \} $ is a sequence in $  S $.  
Then $ \SSS $ is the set of configurations on $  S $ by definition. 
We endow $ \SSS $ with the vague topology, 
under which $ \SSS $ is a Polish space. 

Let $ \mu $ be a probability measure on $ (\SSS , \mathcal{B}(\SSS ) ) $. 
We call a function $ \rho ^n $ the $ n $-correlation function of $ \mu $ 
with respect to (w.r.t.) the Lebesgue measure  
if $ \map{\rho ^n }{ S ^n}{ \mathbb{R} } $ is a permutation invariant function such that 
\begin{align}\label{:20a}&
\int_{A_1^{k_1}\! \times \! \cdots \! \times \! A_m^{k_m}} 
\rho ^n (x_1,\ldots,x_n) dx_1\cdots dx_n 
 = \int _{\SSS } \prod _{i = 1}^{m} 
\frac{\mathsf{s} (A_i) ! }
{(\mathsf{s} (A_i) - k_i )!} d\mu
 \end{align}
for any sequence of disjoint bounded measurable subsets 
$ A_1,\ldots,A_m \subset  S $ and a sequence of natural numbers 
$ k_1,\ldots,k_m $ satisfying $ k_1+\cdots + k_m = n $.  
Here we set $ (\mathsf{s} (A_i) - k_i )!=\infty $ if $ (\mathsf{s} (A_i) - k_i ) < 0 $.

We assume $ \mu $ satisfies the following. 

\noindent
\thetag{H.1} The measure $ \mu $ has 
a locally bounded, $ n $-correlation function 
$ \rho ^n $ for each $ n \in  \mathbb{N}   $.

We introduce a Hamiltonian on a bounded Borel set $ A $ as follows. 
For Borel measurable functions 
$ \map{\Phi }{ S }{ \mathbb{R} \cup \{\infty \}} $ 
and 
$ \map{\Psi }{ S \! \times \!  S }{ \mathbb{R} \cup \{\infty \}} $ 
with $ \Psi (x,y) = \Psi (y,x) $, let
\begin{align} \label{:20b} 
& \mathcal{H}_{A } ^{\Phi , \Psi } 
(\mathsf{x}) = 
\sum_{x_i\in A } \Phi ( x_i ) + 
\sum_{x_i, x_j\in A , i < j } 
\Psi ( x_i, x_j)
,\quad \text{ where } 
\mathsf{x} = \sum _i \delta _{x_i} 
.\end{align}
We assume $ \Phi < \infty $ almost everywhere (a.e.) to avoid triviality. 

For two measures $ \nu _1,\nu _2 $ on a measurable space 
$ (\Omega , \mathcal{B})$, we write $ \nu _1 \le \nu _2 $ 
if $ \nu _1(A)\le \nu _2(A)$, for all $ A\in\mathcal{B}$. 
We say a sequence of finite Radon measures $ \{ \nu ^{\n }  \} $ 
on a Polish space $ \Omega $ converge weakly to a finite Radon measure $ \nu $ 
if $ \lim_{\n \to \infty } \int f d \nu ^{\n }  = \int f d\nu $, 
for all $ f \in C_b(\Omega ) $. 

Throughout this paper, $ \{ \br \}$ denotes 
an increasing sequence of natural numbers. We set 
\begin{align}\label{:qg0}&
\Sr = \{ s \in  S \,;\,|s| < \br \},\quad 
\Srm = \{\mathsf{s}\in\SSS ;\mathsf{s}(\Sr ) = m \}
.\end{align}
For notational brevity, we suppress the dependence of $ \Sr $ on $ \{ \br \} $. 
We will later introduce 
$ \Srhat = \{ x \in  S \, ;\, |x|< r \}$ in \eqref{:21z}. 
By definition $ \Sr = \tilde{ S }_{b_r} $. 
In the proof of the main theorems, we will use $ \Sr $ 
more frequently than $ \Srhat $, which is the reason 
we have assigned the more complicated notation $ \Srhat $ 
to the simpler object $  \{ x \in  S \, ;\, |x|< r \}$. 
We set 
\begin{align}\label{:qg9}&
 \mathcal{H}_{r}(\mathsf{x}) = \mathcal{H}_{\Sr }^{\Phi ,\Psi }(\mathsf{x})
.\end{align}
For a subset $ A \subset  S $, we define the map 
$ \map{\pi _{A }}{\SSS }{\SSS } $ by 
$ \pi _{A } (\mathsf{s}) = \mathsf{s}( A \cap \cdot ) $.

\begin{dfn}\label{dfn:1} 
A probability measure $ \mu $ 
is said to be a $ (\Phi , \Psi ) $-quasi-Gibbs measure 
if the following holds: \\
\thetag{1} 
There exists an increasing sequence $ \{ \br \} $ of natural numbers 
such that, for each $ r ,m \in  \mathbb{N}   $, 
there exists a sequence of Borel subsets $ \mathsf{S}_{r,k}^{m} $ satisfying 
\begin{align} \label{:qg1}&
\mathsf{S}_{r,k}^{m} \subset  
\mathsf{S}_{r,k+1}^{m} \subset  \mathsf{S}_{r}^{m} 
\text{ for all } k , \quad 
\limi{k} \murk  =  \murm  
\quad \text{ weakly, } 
\end{align}
where $ \murk = \mu ( \cdot \cap \mathsf{S}_{r,k}^{m} ) $ and 
$ \murm = \mu (\cdot \cap \Srm ) $. \\
\thetag{2} 
For all $ r,m,k \in  \mathbb{N}   $ and 
$ \murk $-a.e.\! $ \mathsf{s} \in \SSS $, 
\begin{align}\label{:qg2}&
\frac{1}{\cref{;2y}}
e^{-\mathcal{H}_{r}(\mathsf{x})} 
1_{\Srm }(\mathsf{x})
\Lambda (d\mathsf{x})\le 
\murky (d\mathsf{x}) \le 
\cref{;2y} 
 e^{-\mathcal{H}_{r}(\mathsf{x})} 
1_{\Srm }(\mathsf{x})
\Lambda (d\mathsf{x})
.\end{align}
Here, $\Ct \label{;2y} = \cref{;2y}  (r,m,k,\pi _{\Sr ^c}(\mathsf{s})) $
 is a positive constant, 
$ \Lambda $ is the Poisson RPF for which the intensity 
is the Lebesgue measure on $  S $, and $ \murky $ is 
the regular conditional probability measure of $ \murk $ defined by 
\begin{align} \label{:qg3} 
& \murky (d\mathsf{x}) = \murk (\pi _{ \Sr } 
\in d\mathsf{x} | \ 
\pi _{ \Sr ^c }(\mathsf{s})) 
.\end{align}
\end{dfn}

We remark that the original definition of the quasi-Gibbs property 
in \cite{o.rm} is slightly more general than the above. 

We call $ \Phi $ (resp. $ \Psi $) a free (interaction) potential. 
When $ \Psi $ is an interaction potential, we implicitly assume that 
$ \Psi (x,y)= \Psi (y,x)$. 

\smallskip 

\begin{rem}\label{r:1} 
\thetag{1} 
By definition,  $ \murk ((\Srm )^c) = 0 $. 
Since $ \murky $ is 
$ \sigma [\pi _{ \Sr ^c }]$-measurable in 
$ \mathsf{s}$, 
we have the disintegration of the measure $\murk $ 
\begin{align}\label{:qg4}&
\murk \circ \pi _{ \Sr }^{-1}(d\mathsf{x}) = \int_{\SSS }
\murky (d\mathsf{x})\murk (d \mathsf{s})
.\end{align}
\thetag{2} 
Let 
$ \mury (d\mathsf{x}) = \murm (\pi _{ \Sr } (\mathsf{s}) 
\in d\mathsf{x} | \ \pi _{ \Sr ^c }(\mathsf{s})) $. 
Recall that a probability measure $\mu $ is said to be 
a $ (\Phi , \Psi ) $-canonical Gibbs measure if 
$ \mu $ satisfies the DLR equation \eqref{:qg5}; that is, 
for each $ r , m \in  \mathbb{N}   $, the conditional probability $ \mury $ satisfies 
\begin{align}\label{:qg5}&
\mury (d\mathsf{x}) = 
\frac{1}{\cref{;2yy}} e^{-\mathcal{H}_{r}(\mathsf{x}) - \Psi _r (\mathsf{x},\mathsf{s})  }
1_{\Srm }(\mathsf{x}) \Lambda (d\mathsf{x})
\quad \text{ for $ \murm $-a.e.\ $ \mathsf{s} $. }
\end{align}
Here, $ 0 < \Ct \label{;2yy}< \infty $ is the normalization and, 
for $ \mathsf{x} = \sum _i \delta _{x_i}$ and 
$ \mathsf{s} = \sum _j \delta _{s_j}$, we set 
\begin{align}\label{:qg6}&
\Psi _r (\mathsf{x},\mathsf{s})  = 
\sum_{x_i\in \Sr , s_j \in \Sr ^c } 
\Psi ( x_i, s_j) 
.\end{align}
\thetag{3} 
$ (\Phi , \Psi ) $-canonical Gibbs measures are 
$ (\Phi , \Psi ) $-quasi-Gibbs measures. The converse is, however,  not true. 
When $ \Psi (x,y) = - \beta \log|x-y|$ and the $ \mu $ are translation invariant, 
the $ \mu $ are not $ (\Phi , \Psi ) $-canonical Gibbs measures. 
This is because the DLR equation does not make sense. Indeed, 
$ |
\Psi _r (\mathsf{x},\mathsf{s})  
| = \infty $ for $ \mu $-almost surely (a.s.) $ \mathsf{s}$. 
The point is that one can expect a cancellation between 
$ \cref{;2yy} $ and $ e^{-\Psi _r (\mathsf{x},\mathsf{s})  } $ even if $ |\Psi _r (\mathsf{x},\mathsf{s})  | = \infty $. 
\\
\thetag{4} Unlike canonical Gibbs measures, 
the notion of quasi-Gibbs measures is quite flexible for free potentials. 
Indeed, if $ \mu $ is a $ (\Phi ,\Psi )$-quasi-Gibbs measure, 
then $ \mu $ is also a $ (\Phi + F ,\Psi )$-quasi-Gibbs measure 
for any locally bounded measurable function $ F $. 
Thus, we write $ \mu $ a $ \Psi $-quasi-Gibbs measure if 
$ \mu $ is a $ (0,\Psi )$-quasi-Gibbs measure. 
\end{rem}

We give a pair of conditions for the quasi-Gibbs property. 
These conditions guarantee that $ \mu $ has 
a good finite-particle approximation 
$\{\muN \}_{\n \in \mathbb{N}    }$ that enables us to prove the quasi-Gibbs property. 
We set  
\begin{align}\label{:21z}&
 \Srhat = \{ x \in  S \, ;\, |x|< r \} ,\quad \Dr ^n = \prod_{m=1}^{n} \{ |x_m|< r \} 
.\end{align}

\medskip 
\noindent 
\thetag{H.2} 
There exists a sequence of probability measures $\{ \muN \}_{\n \in \mathbb{N}    }$ on 
$ \SSS $ satisfying the following.  

\noindent 
\thetag{1} 
The $ n $-correlation functions $  \rN $ of $  \muN $ satisfy  
\begin{align} \label{:21a} &
\lim _{\n \to \infty } \rN (\mathbf{x}_n) = \rho ^n (\mathbf{x}_n) 
\quad \text{ a.e.} 
\quad \text{ for all $ n \in  \mathbb{N}   $,}
\\ \label{:21b} 
& \sup 
\{  \rN (\mathbf{x}_n)  ; \n \in \mathbb{N}    ,\, \mathbf{x}_n \in  \Srhatn \} 
\le \{ \cref{;70} n ^{\delta }\} ^n
\quad \text{ for all $ n,r\in  \mathbb{N}   $} 
,\end{align}
where $ \mathbf{x}_n = (x_1,\ldots,x_n) \in S ^{n}$, 
$ \Ct \label{;70}=\cref{;70}(r) >0$, and $ \delta = \delta (r) < 1 $ 
are constants depending on $ r \in  \mathbb{N}   $. 

\noindent 
\thetag{2} \  $ \muN (\mathsf{s}( S ) \le \nN ) = 1 $ for each $ \n $, 
where $ \nN \in  \mathbb{N}   $. 

\noindent \thetag{3} 
$ \muN $ is a $ (\PhiN ,\PsiN ) $-canonical Gibbs measure. 

\noindent \thetag{4} 
There exists a sequence $ \{ \mmi  \}_{\n \in  \mathbb{N}   } $ in $  \mathbb{R}^{d}  $ such that 
\begin{align}\label{:21c}&
\limi{\n } \{\PhiN (x) - \mmi \cdot x \} = 
\Phi (x) \quad \text{ for a.e.\ \!\! $ x $,} 
\quad  
\\ \notag &
\infN \inf _{x \in  S } \{\PhiN (x) - \mmi \cdot x \}  > -\infty 
.\end{align}
Here $ \cdot $ denotes the standard inner product in $  \mathbb{R}^{d}  $. 

\noindent \thetag{5} 
The interaction potentials 
	$ \map{\PsiN }{ S \! \times \!  S }{ \mathbb{R} \cup \{ \infty \}} $ 
satisfy the following. 
\begin{align} 
 \label{:21d}& 
\limi{\n } \PsiN = \Psi \text{ compactly and uniformly in }
 C^1( S \! \times \! S \backslash \{ x=y \}) , 
\\ \notag & 
	\infN \inf _{  x,y \in \Srhat }\PsiN (x,y) > - \infty 
		\quad \text{ for all }r\in \mathbb{N}   
.\end{align}

\begin{rem}\label{r:22} 
For the GUE soft-edge approximation of the Airy RPF, 
we take $ \mmi  = \n ^{1/3} $. In fact, in this case, the limit of $ \PhiN $ diverges. 
Hence, we substitute $ \mmi \cdot x $ from $ \PhiN (x) $ to make the limit finite. 
In a forthcoming paper, we will see 
that the terms $ \mmi \cdot x $ are cancelled by the interaction terms. 
\end{rem}

\bigskip 

The next assumption \thetag{H.3} is a tightness condition on $\{\muN \}$ 
according to the interaction $ \PsiN $. Indeed, \thetag{H.3} plays 
the most significant role in the proof of the quasi-Gibbs property of $ \mu $. 
To introduce \thetag{H.3}, we establish some notations. 

Let 
$ \mathsf{x} = \sum \delta _{x_i}$ and $ \mathsf{y} = \sum \delta _{y_j}\in \SSS $. 
For $ \{\Sr \}$ in \eqref{:qg0}, we set 
$ \Srs = \Ss \backslash \Sr $ and $ \Srinfty =  \Sr ^c $. 
For $r<s \le t < u \le \infty $, we set 
\begin{align} \label{:21e}&\quad \quad 
\PsiNrstu (\mathsf{x},\mathsf{y})  = 
\sum_{x_i\in  S _{rs} ,\ y_j \in  S _{tu}} 
\PsiN (x_i,y_j) \quad \quad 
.\end{align}
We write $\PsiN _{r,st} = \PsiN _{0r,st}$  and 
$ \PsiN _{r,rs} (\mathsf{x},\mathsf{y}) = \PsiN _{r,rs} (x,\mathsf{y}) $ if 
$ \mathsf{x}=\delta _{x}$. 

For $r<s \le t < u \le \infty $, let 
\begin{align}\label{:21o}&
\wPsiNrstu (\mathsf{x},\mathsf{y}) = 
\PsiNrstu (\mathsf{x},\mathsf{y})  + \{ \xrs \} \cdot (\mmt -\mmu ) 
.\end{align}
We set $\wPsiN _{r,st} = \wPsiN _{0r,st}$. 
For $ \{ \PsiN \} $, $ r,k\in  \mathbb{N}   $, and $\{ \mms \} $ we define $ \Hrk $ by 
\begin{align}
\label{:21f} &
\Hrk = \{ \mathsf{y}\in \mathsf{S} \, ;\, \{ 
\supN \sups \sup _{x\not=w\in\Sr }   
\frac{| \wPsiN _{r,rs} (x,\mathsf{y}) -\wPsiN _{r,rs} (w,\mathsf{y}) |} 
{| x -  w |} \} 
  \le \ak \} 
.\end{align}

We note that the set $ \Hrk $ depends on $ \{ \mms  \} $, 
although for brevity, we suppress $ \{ \mms  \} $ in denoting $ \Hrk $. 
The functions $ \{ \mms  \} $ in \eqref{:21f} are related to 
the sequence $ \{ \mmi  \} $ in \thetag{4} of \thetag{H.2} 
by the condition \eqref{:21h} below.  

\medskip 

\noindent
\thetag{H.3} 
There exists a sequence $ \{ \mms \} $ in $  \mathbb{R}^{d}  $ 
such that the set $ \Hrk $ satisfies the following: 
\begin{align} \label{:21g} & 
\limi{k} \limsup_{ \n \to \infty } 
\muN (\Hrk ^c) = 0 \quad \text{ for all $ r \in  \mathbb{N}   $}
,\\\label{:21h}&
\limi{s} \mms =  \mmi ,\quad 
\supN |\mms | < \infty \quad \text{ for all } s\in \mathbb{N}
. \end{align}

\medskip

\begin{rem}\label{r:21}
When $ \mms \equiv 0 $, the set $ \Hrk $ in \eqref{:21f} equals 
$ \Hrk $ in \cite{o.rm}. Thus, this definition is a generalization of 
$ \Hrk $ in \cite{o.rm}. The function $ \mmi $ compensates the sum 
$ ({\PsiN _{r,rs} (x,\mathsf{y}) -\PsiN _{r,rs} (w,\mathsf{y})} ) / ({ x -  w }) $. 
For the Airy RPFs, we have no hope to ensure \thetag{H.3} 
without this compensation. 
\end{rem}

\begin{thm}	\label{l:21}
Assume \thetag{H.1}, \thetag{H.2} and \thetag{H.3}. 
Then $ \mu $ is a $ (\Phi , \Psi ) $-quasi-Gibbs measure. 
\end{thm}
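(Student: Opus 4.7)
The strategy is to derive the quasi-Gibbs inequality \eqref{:qg2} for $ \mu $ as a uniform-in-$\n $ limit of the exact DLR identities satisfied by the finite-particle approximants $ \muN $ via (H.2)(3), with (H.3) supplying the control needed to handle the long-range, logarithmic-type interaction. First, set $ \mathsf{S}^{m}_{r,k} = \Srm \cap \Hrk $; the nested inclusion $ \mathsf{S}^{m}_{r,k} \subset \mathsf{S}^{m}_{r,k+1} $ is automatic, and the tail estimate \eqref{:21g} together with the correlation-function convergence \eqref{:21a} and the local uniform bound \eqref{:21b} lets one transfer $ \muN (\Hrk^c)\to 0 $ to $ \mu $, giving $ \murk \to \murm $ weakly as $ k \to \infty $. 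This verifies clause~(1) of Definition~\ref{dfn:1}.

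For the bound \eqref{:qg2}, start from the canonical-Gibbs density of $ \muN $ conditional on $ \pi _{\Sr ^c}(\mathsf{s}) $,
\begin{equation*}
\mukNm (d\mathsf{x} \mid \pi _{\Sr ^c}(\mathsf{s})) = \frac{1}{Z^{\n }(\mathsf{s})}\exp\bigl\{-\mathcal{H}_{\Sr }^{\PhiN ,\PsiN }(\mathsf{x}) - \PsiN _{r,r\infty }(\mathsf{x},\mathsf{s})\bigr\}\,1_{\Srm \cap \Hrk }(\mathsf{x})\,\Lambda (d\mathsf{x}).
\end{equation*}
The key algebraic step is to reorganize the exponent by splitting $ \PhiN (x_i) = [\PhiN (x_i) - \mmi \cdot x_i] + \mmi \cdot x_i $ (using (H.2)(4)) and, along an increasing sequence $ r = s_0 < s_1 < \cdots $, telescoping the tail through the definition \eqref{:21o}:
\begin{equation*}
\PsiN _{r,r\infty }(\mathsf{x},\mathsf{s}) = \sum _{j \ge 0} \wPsiN _{r,s_j s_{j+1}}(\mathsf{x},\mathsf{s}) - \Bigl(\sum _{x_i \in \Sr } x_i \Bigr)\cdot (\mmr - \mmi ).
\end{equation*}
The crucial point is that the coefficient $ \mmr - \mmi $ produced by the telescoping, together with the linear $ \mmi \cdot \sum x_i $ hidden inside $ \PhiN $, is arranged---via the compatibility \eqref{:21h} between the two $ \mathfrak{m}^{\n }$-sequences---so that the divergent contributions cancel, leaving a bounded remainder plus the Lipschitz-controlled term $ \wPsiN _{r,r\infty } $.

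With this reorganization, the uniform density bound follows from the Lipschitz estimate \eqref{:21f}. Writing $ \wPsiN _{r,rs}(\mathsf{x},\mathsf{y}) = \sum _{x_i \in \Sr } \wPsiN _{r,rs}(\delta _{x_i},\mathsf{y}) $ and using (H.2)(2) to replace $ s = \infty $ by a finite $ s $ beyond the support of $ \muN $, \eqref{:21f} yields $ |\wPsiN _{r,r\infty }(\mathsf{x},\mathsf{s}) - \wPsiN _{r,r\infty }(\mathsf{x}',\mathsf{s})| $ bounded by a constant depending only on $ r,m,k $, uniformly in $ \n $ and in $ \mathsf{x},\mathsf{x}' \in \Srm $, $ \mathsf{s} \in \Hrk $. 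Combined with (H.2)(4)--(5), this produces a two-sided bound
\begin{equation*}
c^{-1}\, e^{-\mathcal{H}_{r}(\mathsf{x})}\,1_{\Srm }(\mathsf{x})\,\Lambda (d\mathsf{x}) \;\le\; \mukNm (d\mathsf{x}\mid \pi _{\Sr ^c}(\mathsf{s})) \;\le\; c\, e^{-\mathcal{H}_{r}(\mathsf{x})}\,1_{\Srm }(\mathsf{x})\,\Lambda (d\mathsf{x})
\end{equation*}
with $ c = c(r,m,k,\pi _{\Sr ^c}(\mathsf{s})) $ independent of $ \n $. Passing to the limit $ \n \to \infty $ using (H.2)(1)--(2) and the tightness from (H.3), the bound survives to yield \eqref{:qg2} for $ \murky $.

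The main obstacle is the cancellation identity in the second paragraph: the divergence $ \mmi \to \infty $ (e.g., $ \mmi \sim \mathsf{n}^{1/3} $ for the Airy RPF) appears both in $ \PhiN $ and, effectively, in the derivative of the long-range tail $ \PsiN _{r,r\infty }$, and making these cancel requires matching the two independently defined $ \mathfrak{m}^{\n }$-sequences of (H.2)(4) and (H.3), which is the content of \eqref{:21h}. A secondary technical point is propagating the Lipschitz bound from the shell-wise $ \wPsiN _{r,rs}$ to the full tail $ \wPsiN _{r,r\infty }$, which relies on the finite-support condition (H.2)(2) to truncate the infinite sum to finitely many shells.
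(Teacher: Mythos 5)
Your setup, cut-off set $\Srm\cap\Hrk$, and the role of the compensation $\mmi$ all align with the paper, and your telescoping identity for $\PsiN_{r,r\infty}$ is correct. However, there are two genuine gaps in how you try to close the argument.

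\textbf{The conditional-measure limit is not justified.} You condition $\mukNm$ on $\pi_{\Sr^c}(\mathsf{s})$ (the whole outside configuration) in a single step and then write ``passing to the limit $\n\to\infty$~\ldots the bound survives to yield \eqref{:qg2}.'' Weak convergence of $\mukNm$ to $\murk$ (Lemma~\ref{l:31}) does \emph{not} imply convergence of the regular conditional kernels $\mukNm(\,\cdot\,|\,\pi_{\Sr^c}(\mathsf{s}))$; conditional probabilities are highly discontinuous functionals of the measure, and this is exactly the obstacle the paper flags after \eqref{:35a}--\eqref{:35b}. The paper therefore conditions only on the finite shell $\pi_{\Srs}(\mathsf{s})$, proves a uniform two-sided bound (Lemma~\ref{l:42}) and then establishes convergence of the conditional kernels by showing relative compactness of the densities via the chain of cut-offs in \sref{s:4} (Lemmas~\ref{l:46}--\ref{l:4(}, with the Ascoli--Arzel\'a argument), culminating in Proposition~\ref{l:45}. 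Only afterwards does it let $s\to\infty$, and that second limit is a nontrivial step in its own right, carried out via the reverse-martingale convergence argument of Lemma~\ref{l:52}. Your proposal contains no substitute for either of these steps; the claimed density bound for the prelimit kernel does not by itself produce the bound \eqref{:qg2} for $\murky$.

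\textbf{The uniform Lipschitz control on the full tail does not follow the way you claim.} You propose to ``replace $s=\infty$ by a finite $s$ beyond the support of $\muN$'' using \thetag{H.2}(2). But \thetag{H.2}(2) bounds the \emph{number} of particles, not their locations, so the needed $s$ is configuration- and $\n$-dependent, and the correction $(\sum_{x_i\in\Sr}x_i)\cdot(\mms-\mmi)$ that appears is not uniformly controllable (via \eqref{:21h}, $\supN|\mms|<\infty$ only for each fixed $s$). The set $\Hrk$ in \eqref{:21f} controls $\wPsiN_{r,rs}$ uniformly in finite $s$, and the paper exploits exactly that: all ratio estimates (e.g.\ \eqref{:42c}) are organized so that only finite-window quantities $\wPsiN_{r,st}$, $\wPsiN_{rs,st}$ enter (Lemma~\ref{l:41}), with the remaining tail handled inside the integral $\tilde\tau$ of \eqref{:34c}. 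Direct Lipschitz control of $\wPsiN_{r,r\infty}$ is never attempted, and the proposal does not supply an argument to get it.

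Finally, a minor point on clause~(1) of Definition~\ref{dfn:1}: the limit measures $\murk$ are not defined a priori as $\mu(\cdot\cap\Srm\cap\Hrk)$; the paper obtains them as subsequential weak limits of $\mukNm$ via Lemma~\ref{l:31}, and the verification of \eqref{:qg1} is part of what that lemma delivers. Asserting it as ``automatic'' from \eqref{:21g} skips this.
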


We next assume $ d = 1,2$. Thus, to unify these two cases, 
we set $  S = \mathbb{C}$. Indeed, we regard here $ \mathbb{R}^2 $ 
as $ \mathbb{C}$ by the natural correspondence: 
$ \mathbb{R}^2\ni (x,y) \mapsto x + \sqrt{-1}y \in \mathbb{C}$, 
and $ \mathbb{R}$ as the real axis in $ \mathbb{C}$. 
Hence, we view 
$ \mathsf{m}_{r}^{\n } = 
(\mathsf{m}_{r,1}^{\n } , \mathsf{m}_{r,2}^{\n } ) \in \mathbb{R}^2 $ as 
$ \mathsf{m}_{r}^{\n } = \mathsf{m}_{r,1}^{\n } + \sqrt{-1} \mathsf{m}_{r,2}^{\n } \in \mathbb{C}$. 

We assume $ \PsiN $ is independent of $ \n $ and of the form 
\begin{align}\label{:22a}
\Psi (x,y)  & := \PsiN (x,y) = - \beta \log |x -y |\quad \quad (\beta \in  \mathbb{R} )
.\end{align}
We will give a sufficient condition of \thetag{H.3} in terms of correlation functions. 

Let $ \xx =\sum _{i}\delta_{\x _i }$ and $ \Drs = \Ds \backslash \Dr $, where $ \Dr = \{ s \in  S ;\ |s|< r \}$, as before. 
For $1 \le r < s \le \infty $ let $ \map{\mathsf{v}_{\ell ,rs} }{\SSS }{\mathbb{C}}$ such that 
\begin{align}\label{:22b}&
\mathsf{v}_{\ell ,rs} (\xx )= \beta 
\big\{\sum _{\x _i \in \Drs }
 \frac{1}{{ x }_i^{\ell } } \big\} \quad (\ell \ge 2) 
 \\ \label{:22c} & 
 \mathsf{v}_{1,rs} (\xx ) = 
 \beta \big\{ \sum_{ x _i \in \Drs } 
\frac{1}{ x _i } \big\} 
 + \bar{\mathsf{m}}_{r}^{\n } -  \bar{\mathsf{m}}_{s}^{\n } 
 \quad (\ell = 1) 
.\end{align}
Here 
$ \bar{\mathsf{m}}_{r}^{\n } = \mathsf{m}_{r,1}^{\n } - \sqrt{-1} \mathsf{m}_{r,2}^{\n }$ 
is the complex conjugate of $ \mathsf{m}_{r}^{\n } $. 

When $ \ell = 1$, $  \mathsf{v}_{1,rs} $ depends on $ \n $. 
Hence, we write $ \mathsf{v}_{1,rs} = \mathsf{v}_{1,rs}^{\n }$ when 
we emphasize the dependence on $ \n $. 
Although $ \mathsf{v}_{\ell ,rs} $ are independent of $ \n $ when $ \ell \ge 2 $, 
we write $ \mathsf{v}_{\ell ,rs} ^{\n }$ if we want to unify the notation (see \eqref{:22e} and \eqref{:22f}). 

Note that the sum in \eqref{:22b} makes sense for $ \muN $-a.s. $ \xx $ 
even if $ s=\infty $. 
Indeed, by \thetag{2} of \thetag{H.2}, the total number of particles has 
deterministic bound $ \nN $ under $ \muN $. 
Hence, $ \mathsf{v}_{\ell ,rs} (\xx )$ is well defined and finite for 
$ \muN $-a.s. $ \xx $, for all $ \n \in  \mathbb{N}   $. 

Now the key assumption is as follows. 

\smallskip 
\noindent 
\thetag{H.4} There exists an $ \ellell $ such that $ 2\le \ellell \in \mathbb{N}$ 
and that 
\begin{align} \label{:22d}&
\supN  \{ \int_{1\le |x |<\infty } 
 \frac{1 }{\ |  x |^{\ellell }} \, \rNone (x )dx \} < \infty 
\\\intertext{and that, for each $  1 \le \ell < \ellell $, }
\label{:22e}&
\supN  \|  \supP | \vellrs ^{\mathsf{p}} | \,  \|_{\LmNone } < \infty \text{ for all } r<s \in \mathbb{N}, 
\\\label{:22f} &
\limi{s} \supN  \|  \supP | \vellsi ^{\mathsf{p}} | \,  \|_{\LmNone } = 0 
.\end{align}

\begin{thm}	\label{l:22} 
Assume \eqref{:22a} and $  S = \mathbb{C}$. 
Assume \thetag{H.1}, \thetag{H.2} and \thetag{H.4}. 
Then $ \mu $ is a $ (\Phi , \Psi ) $-quasi-Gibbs measure. 
\end{thm}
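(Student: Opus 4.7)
The plan is to reduce \tref{l:22} to \tref{l:21}: in the setting $\Psi(x,y)=-\beta\log|x-y|$ on $S=\mathbb{C}$, I will verify that \thetag{H.4} implies \thetag{H.3}, after which \tref{l:21} gives the conclusion. The central mechanism is the Taylor expansion of $\log(x-y_j)$ in $x/y_j$ for $|y_j|$ large, whose $\ell=1$ term is absorbed precisely by the linear correction $(x-w)\cdot(\mmr-\mms)$ built into $\wPsiN$ in \eqref{:21o}; this is the structural reason the conjugates $\bar{\mathfrak{m}}_r^{\n}$ appear in \eqref{:22c} (they implement the $\mathbb{R}^2$-inner product via $\mathbb{R}^2\simeq\mathbb{C}$).

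For $x,w\in\Sr$ and $y_j\in\Srs$, the absolute convergence of $\log(1-z)=-\sum_{\ell\ge 1}z^{\ell}/\ell$ on $|z|<1$ yields, after taking real parts,
\begin{align*}
\log|x-y_j| - \log|w-y_j| = - \mathrm{Re}\sum_{\ell\ge 1} \frac{x^{\ell}-w^{\ell}}{\ell\,y_j^{\ell}}
.\end{align*}
Summing over $y_j\in\Srs$, multiplying by $-\beta$, and adding the correction $(x-w)\cdot(\mmr-\mms)=\mathrm{Re}[(x-w)(\bar{\mathfrak{m}}_r^{\n}-\bar{\mathfrak{m}}_s^{\n})]$ from \eqref{:21o}, the $\ell=1$ coefficient becomes exactly $\mathsf{v}_{1,rs}^{\n}(\mathsf{y})$ of \eqref{:22c}, while the $\ell\ge 2$ coefficients are the $\n$-independent $\vellrs(\mathsf{y})$ of \eqref{:22b}, so
\begin{align*}
\wPsiNrs(x,\mathsf{y}) - \wPsiNrs(w,\mathsf{y}) = \sum_{\ell\ge 1} \mathrm{Re}\!\left[\frac{x^{\ell}-w^{\ell}}{\ell}\,\mathsf{v}_{\ell,rs}^{\n}(\mathsf{y})\right]
.\end{align*}
Combining with $|x^{\ell}-w^{\ell}|\le \ell\,\br^{\ell-1}|x-w|$ for $x,w\in\Sr$ gives the Lipschitz bound $|\wPsiNrs(x,\mathsf{y})-\wPsiNrs(w,\mathsf{y})|/|x-w| \le \sum_{\ell\ge 1} \br^{\ell-1}|\mathsf{v}_{\ell,rs}^{\n}(\mathsf{y})|$.

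Next I bound the right-hand side in $\LmNone$, uniformly in $\n$ and $s$, by splitting the sum at $\ellell$. For the head $1\le\ell<\ellell$, \eqref{:22e} provides a uniform $L^{1}$-bound on $\supP|\vellrs^{\mathsf{p}}|$: the choice $\mathsf{p}=\n$ absorbs $\supN$, and the decomposition $\vellrs^{\n}=\vellri^{\n}-\vellsi^{\n}$ together with \eqref{:22f} handles $\sups$. For the tail $\ell\ge\ellell$, choosing $\{\br\}$ to grow fast enough to ensure a ratio $|y_j|/\br\ge 2$ on the relevant shells lets me sum a geometric series to obtain $\sum_{\ell\ge\ellell}\br^{\ell-1}|y_j|^{-\ell}\le 2\br^{\ellell-1}|y_j|^{-\ellell}$, and the $L^{1}(\muN)$-bound follows from \eqref{:22d}. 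Markov's inequality yields $\limi{k}\lsupi{\n}\muN(\Hrk^c)=0$, which is \eqref{:21g}. The remaining condition \eqref{:21h} follows: \eqref{:22f} forces $\mms\to\mmi$ as $s\to\infty$ by extracting the deterministic part of $\vellsi^{\n}$, while \eqref{:22e} at $\ell=1$ pins $\supN|\mms|<\infty$ for each fixed $s$.

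With \thetag{H.3} verified, \tref{l:21} gives the $(\Phi,\Psi)$-quasi-Gibbs property. The main obstacle is the alignment in the $\ell=1$ Taylor term: without the correction $(x-w)\cdot(\mmr-\mms)$ of \eqref{:21o}, the bare coefficient $\beta\sum_{y_j\in\Srs}1/y_j$ is not $L^{1}(\muN)$-controllable under the finite-particle Airy approximation (where the drifts $\mmi$ scale as $\n^{1/3}$), and \thetag{H.3} would fail at the very first term --- this is the precise obstruction anticipated in \rref{r:21} and \rref{r:22}.
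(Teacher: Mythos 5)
Your broad strategy is correct and matches the paper's: Taylor-expand $\log|x-y_j|$ in $x/y_j$, use the linear correction $(x-w)\cdot(\mmr-\mms)$ from \eqref{:21o} to turn the $\ell=1$ coefficient into $\mathsf{v}_{1,rs}^{\mathsf{n}}$, split the $\ell$-sum at $\ellell$, and feed \thetag{H.3} into \tref{l:21}. Your Taylor identity and the resulting Lipschitz bound by $\sum_{\ell\ge 1}\br^{\ell-1}|\mathsf{v}_{\ell,rs}^{\mathsf{n}}(\mathsf{y})|$ are correct, and the head terms $1\le\ell<\ellell$ are handled in essentially the same spirit as the paper's Lemma~\ref{l:72}.

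The tail estimate, however, has a genuine gap. You claim that choosing $\{\br\}$ to grow fast enough ensures $|y_j|/\br\ge 2$ on the relevant shells; this is impossible, because the $y_j$ contributing to $\wPsiN_{r,rs}$ range over all of $\Srs$, which includes points with $|y_j|$ arbitrarily close to $\br$ regardless of how $\{\br\}$ is chosen. For such $y_j$, $\sum_{\ell\ge\ellell}\br^{\ell-1}|y_j|^{-\ell} = \br^{\ellell-1}|y_j|^{-\ellell}\big/\big(1-\br/|y_j|\big)$ diverges as $|y_j|\downarrow\br$, so your asserted bound $2\br^{\ellell-1}|y_j|^{-\ellell}$ fails. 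Worse, the subsequent $L^1(\muN)$-plus-Markov step cannot be salvaged: the expectation $E^{\muN}\big[\sum_{y_j\in\Sri}1/(|y_j|^{\ellell}-\br^{\ellell})\big]=\int_{\Sri}\rho_{\mathsf{n}}^1(x)\,dx/(|x|^{\ellell}-\br^{\ellell})$ has a nonintegrable singularity near $|x|=\br$ (the 1-correlation function is bounded below there by \eqref{:21a}--\eqref{:21b}), so it is $+\infty$ and Markov's inequality does not apply. The paper circumvents exactly this in \lref{l:71}: it splits $\Sri$ into a near shell $[\br,\brr)$ and a far part, treats the far part by Chebyshev together with \eqref{:22d}, and for the near shell replaces the failed $L^1$ bound by a separate probabilistic argument (the sets $\mathsf{V}_{3,k}$, $\mathsf{V}_{4,k}$), showing that with probability close to $1$ no particle is within distance $\sim k^{-1/2}$ of the sphere $|y|=\br$ and that the number of particles in the near shell is $\lesssim k^{1/2}$, again via Chebyshev and the uniform local bound on $\rho_{\mathsf{n}}^1$. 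This near-shell argument is indispensable and is what your proposal is missing. (Your final remark that \eqref{:22e}--\eqref{:22f} ``pin down'' $\mms$ and give \eqref{:21h} is also not justified as stated --- the $L^1$ conditions on $\mathsf{v}_{\ell,rs}$ do not by themselves recover the deterministic sequence $\{\mms\}$ --- but this is a secondary point compared with the tail estimate.)
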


In a forthcoming paper, 
we will prove the quasi-Gibbs property of Airy$ _{\beta }$ RPFs, 
and solve the associated ISDEs. \tref{l:22} will be used there. 
Whenever we consider the RPFs appearing under soft-edge scaling, 
such as Tacknode \cite{johansson.11}, 
the divergence of the free potentials such as 
\eqref{:14} always occurs, which causes a difficulty in treating soft-edge scaling. 
It is plausible that our results can resolve this. 

Stochastic dynamics of infinitely many particle systems in $ \mathbb{R}$ 
related to 
random matrix theory have been constructed by explicit calculation 
based on space-time correlation functions 
(see \cite{johansson.03}, \cite{knt.04}, \cite{kt.07-ptrf}, \cite{kt.07}, 
\cite{p-spohn.02}, and others). 
In this body of work, the properties of dynamics from a viewpoint of stochastic analysis, 
such as the semi-martingale property, and Ito's formula, have not yet been 
well developed. Our method, together with the forthcoming paper, 
gives SDE representations of the dynamics, 
which enables us to use the stochastic analysis effectively. 

In \cite{yuu.05}, Yuu proved that all determinantal RPFs with kernels $ K $ 
such that the spectrum $ \mathrm{Spec}(K)$ of the associated $ L^2$-operator 
satisfies $ 0 <  \mathrm{Spec}(K) < 1$ become a kind of Gibbs measure, 
and by using this, he constructed associated diffusions. 
However, the spectrum of kernels of determinantal RPFs 
appearing in random matrix theory in the infinite-volume limit 
usually contains $ 1 $.  
Hence, his result excludes RPFs related to random matrix theory 
such as Dyson's model, the Bessel RPF, and, in particular, the Airy RPF. 
It is an interesting open problem to prove that all determinantal RPFs are quasi-Gibbs measures. 

\section{Proof of \tref{l:21}} \label{s:3}
In \sref{s:3}--\sref{s:5}, we will prove \tref{l:21}. 
In the present section, we first prepare a lemma from \cite{o.rm}, 
and explain the strategy of the proof of \tref{l:21}.  
In fact, we divide the proof into two parts. 
We will prove the first step \eqref{:35a} in \sref{s:4}, 
and the second step \eqref{:35b} in \sref{s:5}. 

We fix $ r , m \in  \mathbb{N}   $ throughout Sections \ref{s:3}--\ref{s:5}. 
Let $ \Srm  $ be as in \eqref{:qg0}. 
Using the set $ \Hrk $ defined in \eqref{:21f}, 
we introduce cut-off measures $ \mukNm $: 
\begin{align}\label{:30a}& 
\mukNm = \muN (\cdot \cap \Srm \cap \Hrk ) 
.\end{align}
We will prove \tref{l:21} along this sequence $ \{ \mukNm \} $. 
For this, we first note the following. 

\begin{lem}[Lemma 4.2 in \cite{o.rm}] \label{l:31} 
There exists a weak convergent subsequence of $ \{ \mukNm \}$, 
denoted by the same symbol, with limit measures $ \{ \murk \}$ 
satisfying \eqref{:qg1}, for all $ r,k,m $. 
\end{lem}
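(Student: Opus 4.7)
The plan is to first establish tightness of $\{\mukNm\}_{\n \in \mathbb{N}}$ for each fixed triple $r,k,m$, extract a single subsequence via a diagonal argument, and then identify the limits as restrictions of $\mu$ to Borel sets $\mathsf{S}_{r,k}^{m}$ enjoying the nesting and weak-convergence properties in \eqref{:qg1}.

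For tightness on the Polish space $\SSS$ equipped with the vague topology I would appeal to Prohorov's theorem. A subset $K \subset \SSS$ is relatively compact if and only if $\sup_{\mathsf{s}\in K}\mathsf{s}(B)<\infty$ for every compact $B \subset  S $. Uniform bounds on expected particle counts in bounded regions follow from the first-correlation-function bound in \thetag{H.2}, since $\sup_\n \int_B \rNone(x)\,dx < \infty$ for any compact $B$; Markov's inequality then yields the required compact-containment estimate uniformly in $\n$, and total masses $\mukNm(\SSS)\le 1$ are clearly bounded. A diagonal extraction across the countable index set $(r,k,m)\in\mathbb{N}^3$ produces one subsequence, still denoted by $\n$, along which $\mukNm$ converges weakly to a sub-probability measure $\murk$ for every $(r,k,m)$.

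Next I would identify the limits. Using the correlation-function convergence \eqref{:21a} together with the deterministic upper bound \thetag{H.2}\thetag{2} on particle numbers, a standard moment argument shows that the full sequence $\{\muN\}$ converges weakly to $\mu$; in particular, $\muN(\,\cdot\,\cap \Srm)$ converges weakly to $\murm$ along the chosen subsequence. The natural candidate for $\mathsf{S}_{r,k}^{m}$ is $\Srm\cap \mathsf{H}_{r,k}$, possibly modified along an at most countable set of exceptional $k$'s so that the boundary is $\mu$-null and the portmanteau theorem applies; with this choice one checks $\murk=\mu(\,\cdot\,\cap \mathsf{S}_{r,k}^{m})$. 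The inclusion $\mathsf{S}_{r,k}^{m}\subset \mathsf{S}_{r,k+1}^{m}$ is immediate from the definition \eqref{:21f}, because enlarging $k$ merely weakens the Lipschitz-type inequality defining $\mathsf{H}_{r,k}$, and $\mathsf{S}_{r,k}^{m}\subset \Srm$ is obvious.

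The main obstacle, and the core of the proof, is the weak convergence $\lim_{k\to\infty}\murk=\murm$. The key ingredient is the decomposition
\begin{align*}
\muN(\,\cdot\,\cap \Srm)=\mukNm+\muN(\,\cdot\,\cap \Srm \cap \mathsf{H}_{r,k}^{c}),
\end{align*}
combined with the uniform-in-$\n$ smallness \eqref{:21g}, which asserts $\lim_{k\to\infty}\limsup_{\n}\muN(\mathsf{H}_{r,k}^{c})=0$. Passing to the limit $\n\to\infty$ first along the selected subsequence and then letting $k\to\infty$, the total-variation discrepancy between $\murk$ and $\murm$ is bounded by $\limsup_{\n}\muN(\mathsf{H}_{r,k}^{c})$, which tends to zero. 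This commutation of the $\n$- and $k$-limits is precisely what \eqref{:21g} is designed to guarantee, and it delivers \eqref{:qg1}.
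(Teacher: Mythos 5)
The lemma is not proved in this paper: the header explicitly attributes it to Lemma~4.2 of \cite{o.rm} and no argument is supplied here. So there is no local proof to compare your proposal against, and I will instead assess it on its own merits.

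Your overall route is the natural one and almost certainly tracks the cited reference: tightness of $\{\mukNm\}_{\n}$ via Prohorov and the uniform correlation bound \eqref{:21b}, a diagonal extraction over the countable index set $(r,k,m)$, identification of the limits as $\mu$-restrictions, and the $k\to\infty$ statement in \eqref{:qg1} via \eqref{:21g}. The nesting $\mathsf{S}_{r,k}^m\subset\mathsf{S}_{r,k+1}^m$ is indeed immediate from \eqref{:21f}, and your total-variation estimate for $\murm-\murk$ is the right mechanism for the weak convergence in $k$.

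The one genuine gap is in the identification step, and you only partially acknowledge it. Weak convergence $\muN\to\mu$ does not pass to restrictions $\muN(\cdot\cap A)\to\mu(\cdot\cap A)$ for an arbitrary Borel $A$: you need $A$ to be a $\mu$-continuity set, and the same regularity is tacitly used again when you invoke $\mu(\Hrk^c)\le\limsup_{\n}\muN(\Hrk^c)$, which requires $\Hrk^c$ to be (essentially) open. For the factor $\Srm$ this is harmless, since \thetag{H.1} makes configurations with a particle exactly on $\partial\Sr$ $\mu$-null, so $\Srm$ is a $\mu$-continuity set. For $\Hrk$ your remark about modifying $k$ off a countable exceptional set is the right idea, but it presupposes that the defining functional
\begin{align*}
\mathsf{y}\longmapsto \supN\sups\sup_{x\ne w\in\Sr}\frac{|\wPsiN_{r,rs}(x,\mathsf{y})-\wPsiN_{r,rs}(w,\mathsf{y})|}{|x-w|}
\end{align*}
has a $\mu$-negligible set of discontinuities in the vague topology. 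This is not automatic: the inner sums jump when a particle of $\mathsf{y}$ crosses $\partial\Srs=\partial\Sr\cup\partial\Ss$, and for singular $\PsiN$ the difference quotient can blow up as $y_j$ approaches $\partial\Sr$. You should argue explicitly that the discontinuity set is contained in the union (over $s$) of the events $\{\mathsf{y}(\partial\Ss)>0\}$, each of which is $\mu$-null by \thetag{H.1} and the local boundedness of $\rho^1$, and that divergence near $\partial\Sr$ is excluded on $\Hrk$ itself by the $\le k$ bound. Only then is the countable-exceptional-$k$ trick legitimate, and $\Hrk$ (after this harmless relabeling) a $\mu$-continuity set. With that filled in, the argument is complete.
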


Let $ \muABN  $ denote the conditional probability of 
$ \mukNm $ defined by 
\begin{align}\label{:32a}&
\muABN (d\mathsf{x}) = 
\mukNm (\pi _{ \Sr } \in d \mathsf{x} | \, 
\pi _{ \Srs } (\mathsf{s}))
.\end{align}
We note that, although $ \mukNm $ is not necessarily a probability measure, 
we normalize it in such a way that the conditional measure $ \muABN  $ 
is a probability measure. As a result, we have $ \muABN (\SSS ) = 1$ and 
\begin{align}\label{:32b}&
\mukNm \circ \pi _{ \Sr }^{-1}(d\mathsf{x}) = 
\int _{\SSS }\muABN (d\mathsf{x})\, \mukNm \circ \pi _{ \Srs }^{-1}(d\mathsf{s})
.\end{align}
Recall that by \thetag{H.2}, $ \muN $ is a $ (\PhiN ,\PsiN ) $-canonical Gibbs measure. 
Then $ \muN $ satisfies the DLR equation \eqref{:qg5}. 
Hence, $ \muABN $ is absolutely continuous w.r.t.\ 
$ e^{-\mathcal{H}_{r}^{\n }(\mathsf{x})}\mA $. 
Therefore, we denote its density by $ \sABN $. 
Then by definition, we have for $\mukNm $-a.e.\ $\mathsf{s}$ 
\begin{align} \label{:32c} &
\quad \quad \sABN (\mathsf{x}) e^{-\mathcal{H}_{r}^{\n }(\mathsf{x})}\mA 
 = \muABN (d\mathsf{x}),\quad \quad \text{ where }
\mathcal{H}_{r}^{\n } = \mathcal{H}_{\Sr }^{\Phi ^{\n },\PsiN }
.\end{align}

We recall that the limit $ \limi{\n } \PhiN $ diverges in general. 
Such a divergence implies that for $ \mathcal{H}_{r}^{\n } $. 
Thus, to prevent this, we consider the compensation constant $ \mmi $ in \eqref{:21c}, and 
set 
\begin{align}\label{:32d}&
 \widetilde{\mathcal{H}}_{r}^{\n } (\mathsf{x}) = 
 \sum_{x_i\in \Sr } \{ \PhiN ( x_i ) - \mmi \cdot x_i \} + 
 \sum_{x_i, x_j\in \Sr , i < j } \PsiN ( x_i, x_j)  
, \end{align}
where $ \mathsf{x}=\sum_i \delta_{x_i}$. 

\begin{lem} \label{l:32} $  \widetilde{\mathcal{H}}_{r}^{\n } $ satisfy the following. 
\begin{align} \label{:32q}&
\limi{\n } e^{-\widetilde{\mathcal{H}}_{r}^{\n } (\mathsf{x})} = 
e^{-\mathcal{H}_{r}(\mathsf{x})} \quad \text{ for $ \mu $-a.e.\ $ \mathsf{x}$}
,\\ \label{:32r}&
\{\sup_{\n \in \mathbb{N}   } \sup_{\mathsf{x}\in \Srm }
e^{-\widetilde{\mathcal{H}}_{r}^{\n } (\mathsf{x})} \} < \infty 
\quad \text{ for each } m \in  \mathbb{N}   
.\end{align}
\end{lem}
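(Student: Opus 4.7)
My plan is to handle \eqref{:32q} and \eqref{:32r} separately, each as a direct consequence of the corresponding clauses of \thetag{H.2}. The shared observation is that, on $\Srm $, every configuration carries exactly $m$ particles inside the bounded set $\Sr $, so $ \widetilde{\mathcal{H}}_{r}^{\n }(\mathsf{x})$ is a finite sum of at most $m + \binom{m}{2}$ terms; the work thus reduces to routine finite-sum manipulations.

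For \eqref{:32q}, I would reduce the claim to termwise convergence for $\mu $-a.e.\ $\mathsf{x}$. For the free-potential terms, the first assertion in \thetag{4} of \thetag{H.2} gives pointwise a.e.\ convergence $\PhiN (x) - \mmi \cdot x \to \Phi (x)$ off some Lebesgue-null set $N \subset S$. Since $\rho ^1$ is locally bounded by \thetag{H.1},
\[
\mu (\{\mathsf{x} : \exists\, i,\ x_i \in \Sr \cap N\}) \le \int_{\Sr \cap N} \rho ^1(x)\,dx = 0,
\]
so $\mu $-a.s.\ every particle in $\Sr $ avoids $N$. For the interaction terms, the $C^1$ compact-uniform convergence in \thetag{5} of \thetag{H.2} yields pointwise convergence $\PsiN (x,y) \to \Psi (x,y)$ whenever $x \ne y$, while the existence of correlation functions furnished by \thetag{H.1} ensures $\mu $ is a simple point process, so the particles in $\Sr $ are pairwise distinct $\mu $-a.s. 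Summing the finitely many convergent terms and exponentiating yields \eqref{:32q}.

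For \eqref{:32r}, I would produce a uniform lower bound on $\widetilde{\mathcal{H}}_{r}^{\n }$ over $\mathsf{x} \in \Srm $ and $\n \in \mathbb{N}$. The second assertion in \thetag{4} of \thetag{H.2} gives $c_1 := \inf _{\n } \inf _{x \in S} \{\PhiN (x) - \mmi \cdot x\} > -\infty $. Since $\Sr $ is bounded, the second assertion in \thetag{5} of \thetag{H.2} (applied at any index large enough that $\Srhat \supset \Sr $) gives $c_2 := \inf _{\n } \inf _{x,y \in \Sr } \PsiN (x,y) > -\infty $. Summing over at most $m$ free-potential terms and $\binom{m}{2}$ pair terms gives
\[
\widetilde{\mathcal{H}}_{r}^{\n }(\mathsf{x}) \ge m\, c_1 + \binom{m}{2} c_2
\]
uniformly in $\n $ and $\mathsf{x} \in \Srm $; exponentiating yields \eqref{:32r}.

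No substantial obstacle is anticipated for this lemma; it is a clean but necessary technical step for the subsequent quasi-Gibbs argument. The only point requiring a moment's care is that the Lebesgue-null exceptional set from \thetag{4} of \thetag{H.2} and the diagonal $\{x_i = x_j\}$ are both $\mu $-negligible, and both facts follow routinely from the local boundedness of the correlation functions guaranteed by \thetag{H.1}.
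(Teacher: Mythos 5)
Your proof is correct and takes the same route as the paper, whose own proof consists of the single sentence that Lemma~\ref{l:32} ``follows immediately from \eqref{:21c} and \eqref{:21d} combined with \eqref{:32d}.'' You have filled in exactly the details that sentence leaves implicit: for \eqref{:32q}, the free-potential a.e.\ convergence is promoted to a $\mu$-a.e.\ statement via the $\rho^1$ bound from \thetag{H.1}, the interaction terms converge off the diagonal by the $C^1$ compact-uniform convergence in \thetag{5} of \thetag{H.2}, and the locally bounded $\rho^2$ rules out coincident particles in $\Sr$; for \eqref{:32r}, the two $\inf$ conditions in \thetag{4} and \thetag{5} of \thetag{H.2} (noting $\Sr=\Srhat$ with the index $b_r$) give a uniform lower bound $mc_1+\binom{m}{2}c_2$ on $\widetilde{\mathcal H}_r^\n$ over $\Srm$. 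In particular you correctly noticed that \thetag{H.1} is in fact needed for \eqref{:32q}, a point the paper's one-line proof does not spell out.
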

\begin{proof}
\lref{l:32} follows immediately from \eqref{:21c} and \eqref{:21d} 
combined with \eqref{:32d}. 
\end{proof}

We estimate the Boltzmann constants for the Hamiltonians $ \widetilde{\mathcal{H}}_{r}^{\n } $. 
\begin{lem} \label{l:33} 
Let $ \Ct (n)\label{;33}$ be the constant defined by 
\begin{align}\notag & 
\cref{;33} (n) = \sup_{n\le \n \in \mathbb{N}    } \max \{ 
 \int _{\Srm } e^{-\widetilde{\mathcal{H}}_{r}^{\n } (\mathsf{x})  }
 \mA , \ 
\frac{1}
{ \int _{\Srm } e^{-\widetilde{\mathcal{H}}_{r}^{\n } (\mathsf{x}) }  \mA } \} 
.\end{align}
Here $ \mathsf{x} =  \sum_{i=1}^{m} \delta_{x_i}  $
Then there exists an $ \n _0 $ such that $ \cref{;33}(\n _0 ) < \infty $. 
\end{lem}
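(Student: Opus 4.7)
The plan is to bound $\int _{\Srm } e^{-\widetilde{\mathcal{H}}_{r}^{\n }}\mA $ from above uniformly in $\n $ and from below uniformly for $\n \ge \n _0 $; the two bounds together will produce $\cref{;33}(\n _0 ) < \infty $. The upper bound is essentially immediate from the $\n $-uniform lower bounds built into \thetag{H.2}, whereas the lower bound needs Fatou's lemma together with a small positivity argument.

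For the upper bound, the lower bound in \eqref{:21c} gives $\inf _{\n } \inf _{x \in  S } \{ \PhiN (x) - \mmi \cdot x \} \ge -c_1 > -\infty $, and since $\Sr = \tilde{ S }_{\br }$ the lower bound in \eqref{:21d} gives $\inf _{\n } \inf _{x,y \in \Sr } \PsiN (x,y) \ge -c_2 > -\infty $. Summing over the $m$ points and $\binom{m}{2}$ pairs yields $\widetilde{\mathcal{H}}_{r}^{\n }(\mathsf{x}) \ge -C(r,m)$ for every $\mathsf{x} \in \Srm $ and every $\n $. Since $\Lambda (\Srm ) = e^{-|\Sr |}|\Sr |^m / m! $ is finite, the uniform bound $\sup _{\n } \int _{\Srm } e^{-\widetilde{\mathcal{H}}_{r}^{\n }}\mA \le e^{C}\Lambda (\Srm ) < \infty $ follows at once.

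For the lower bound I would pass to the $\n \to \infty $ limit via Fatou. Combining \eqref{:21c} with the off-diagonal compact uniform convergence in \eqref{:21d} shows that $\widetilde{\mathcal{H}}_{r}^{\n } \to \mathcal{H}_{r}$ Lebesgue-a.e.\ on $\Sr ^m $, hence for $\Lambda $-a.e.\ $\mathsf{x} \in \Srm $ (since $\Lambda $ restricted to $\Srm $ is absolutely continuous with respect to the symmetrised Lebesgue measure on $\Sr ^m $). Fatou applied to the nonnegative integrands $e^{-\widetilde{\mathcal{H}}_{r}^{\n }}$ then gives
\begin{equation*}
\liminf _{\n \to \infty } \int _{\Srm } e^{-\widetilde{\mathcal{H}}_{r}^{\n }}\mA \ge \int _{\Srm } e^{-\mathcal{H}_{r}}\mA .
\end{equation*}
The delicate step is that this limit integral is strictly positive: $\Phi $ is a.e.\ finite by the standing assumption following \eqref{:20b}, and $\Psi $ is $C^1$ off the diagonal by \eqref{:21d}, so $\{\mathcal{H}_{r} < \infty \} \cap \Srm $ has positive $\Lambda $-measure and on it $e^{-\mathcal{H}_{r}} > 0$. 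Once positivity is in hand, one picks $\n _0 $ so large that $\int _{\Srm } e^{-\widetilde{\mathcal{H}}_{r}^{\n }}\mA \ge \tfrac{1}{2}\int _{\Srm } e^{-\mathcal{H}_{r}}\mA > 0$ for all $\n \ge \n _0 $, and together with the uniform upper bound this yields $\cref{;33}(\n _0 ) < \infty $. The only genuinely nonroutine ingredient is the positivity of the limit integral, and it is robust precisely because the singularities of $\Phi $ and (in the Airy setting) $\Psi = -\beta \log |x-y|$ live on Lebesgue-null subsets of $\Sr ^m $.
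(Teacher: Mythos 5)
Your proof is correct and follows essentially the same route as the paper: both rely on the $\Lambda$-a.e.\ pointwise convergence $\widetilde{\mathcal{H}}_{r}^{\n}\to\mathcal{H}_{r}$ on $\Srm$, a uniform bound on $e^{-\widetilde{\mathcal{H}}_{r}^{\n}}$ coming from the $\n$-uniform lower bounds in \thetag{H.2}, and the strict positivity of $\int_{\Srm}e^{-\mathcal{H}_{r}}\mA$ deduced from the a.e.\ finiteness of $\Phi$ and $\Psi$. The only cosmetic difference is that the paper invokes the bounded convergence theorem to identify the limit of the integrals directly, whereas you split into a direct uniform upper bound plus Fatou for the lower bound; both packagings give the same conclusion.
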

\begin{proof} 
We deduce  
from \eqref{:32q}, \eqref{:32r}, and the bounded convergence theorem that 
\begin{align} & \notag 
\lim_{\n \to\infty} \int _{\Srm }
e^{-\widetilde{\mathcal{H}}_{r}^{\n } (\mathsf{x}) } \mA = 
\int _{\Srm } e^{-\mathcal{H}_{r}(\mathsf{x})}\mA 
< \infty 
.\end{align}

Recall that $ \Phi (x) < \infty $ a.e.\ by assumption 
(see the line after \eqref{:20b}) 
and $ \Psi (x,y)< \infty $ a.e.\ by the first assumption of \eqref{:21d}.  
Then we see that $ \mathcal{H}_{r}(\mathsf{x})<\infty $ a.e.. 
Hence, 
\begin{align*}&
\int _{\Srm }e^{-\mathcal{H}_{r}(\mathsf{x})}\mA > 0 
.\end{align*}
Combining these completes the proof. 
\end{proof}

Taking \lref{l:32} and \lref{l:33} into account 
we consider the Radon-Nikodym density $  \st \ABN $ 
of $ \muABN $ w.r.t.\ $ e^{-\widetilde{\mathcal{H}}_{r}^{\n } (\mathsf{x})}\mA $. 
Namely, by definition we have 
\begin{align}\label{34y}&
 \st \ABN (\mathsf{x}) e^{-\widetilde{\mathcal{H}}_{r}^{\n } (\mathsf{x})} 
 \mA 
 = \muABN (d\mathsf{x})
. \end{align}
It is then clearly seen that with normalization $ \Ct \label{;42a}$ 
\begin{align}\label{:34z}&
 \st \ABN (\mathsf{x}) = 
 \frac{1}{\cref{;42a}} e^{ - \mmi \cdot \xr } \sABN (\mathsf{x}) 
\quad \text{ for } \mathsf{x} = \sum_i \delta_{x_i} 
.\end{align}

We next consider the decomposition of $ \st \ABN $ in \eqref{:34z}.  
\begin{lem} \label{l:34} 
The density $\st \ABN $ is expressed in such a way that 
\begin{align} \label{:34a} 
 \st \ABN (\mathsf{x}) & = 
 \frac
 {1} {{\cref{;42}^{\n }(\mathsf{s})}} 
 e^{ - \mmr \cdot \xr - \wPsiNrs (\mathsf{x},\mathsf{s})} {\tCNtilde }
 \quad \text{ for $\mukNm $-a.e.\ $\mathsf{s} $}
.\end{align}
Here $ \wPsiNrs $ were given by \eqref{:21z}, and 
$ \Ct ^{\n }\label{;42}(\mathsf{s})$ is the normalization 
\begin{align} \label{:34b}&
\cref{;42}^{\n }(\mathsf{s}) = \int _{\SSS }
e^{ - \mmr\cdot \xr  -\wPsiNrs (\mathsf{x},\mathsf{s})} \tCNtilde 
e^{-\widetilde{\mathcal{H}}_{r}^{\n } (\mathsf{x})  } \mA 
,\end{align}
and $\tCNtilde $ is defined by  
\begin{align}\label{:34c}
\tCNtilde = 1_{\Srm }(\mathsf{x}) \int _{\SSS }&
1_{\Hrk }(\pi _{\Srs }(\mathsf{s})+
\mathsf{z})
\\ \notag & \cdot 
e^{ -\wPsiNrsi (\mathsf{x},\mathsf{z})- \wPsiNssi (\mathsf{s},\mathsf{z})}
\mukNm \circ \pi _{\Ssinfty }^{-1} (d\mathsf{z})  
.\end{align}
\end{lem}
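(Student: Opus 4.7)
My plan is to derive \eqref{:34a} by a direct computation starting from the DLR equation for $\muN$ (legitimate by \thetag{3} of \thetag{H.2}), arranged so that the outside-of-$\Ss$ measure that finally appears is $\mukNm\circ\pi_{\Ssinfty}^{-1}$ rather than $\muN\circ\pi_{\Ssinfty}^{-1}$. Splitting a generic configuration as $\mathsf{x}+\mathsf{s}+\mathsf{z}$ with $\mathsf{x}$ on $\Sr$, $\mathsf{s}$ on $\Srs$, $\mathsf{z}$ on $\Ssinfty$, DLR at level $\Ss$ gives
\begin{align*}
\muN(d\mathsf{x},d\mathsf{s},d\mathsf{z}) = \frac{1}{Z_s^{\n}(\mathsf{z})} e^{-\mathcal{H}_s^{\n}(\mathsf{x}+\mathsf{s}) - \PsiN _{s,s\infty}(\mathsf{x}+\mathsf{s},\mathsf{z})} \mA \, \Lambda(d\mathsf{s}) \, \muN\circ\pi_{\Ssinfty}^{-1}(d\mathsf{z}),
\end{align*}
and the additive splittings $\mathcal{H}_s^{\n}(\mathsf{x}+\mathsf{s}) = \mathcal{H}_r^{\n}(\mathsf{x}) + \mathcal{H}_{\Srs}^{\PhiN,\PsiN}(\mathsf{s}) + \PsiN _{r,rs}(\mathsf{x},\mathsf{s})$ and $\PsiN _{s,s\infty} = \PsiN _{r,s\infty}+\PsiNssi$ isolate the $\mathsf{x}$-dependence.

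Multiplying by $1_{\Srm}(\mathsf{x}) 1_{\Hrk}(\mathsf{s}+\mathsf{z})$ gives $\mukNm$, and taking the regular conditional given $\pi_{\Srs} = \mathsf{s}$ drops the $\mathsf{x}$-free factor $e^{-\mathcal{H}_{\Srs}^{\PhiN,\PsiN}(\mathsf{s})}$. Using \eqref{34y} and \eqref{:32d} to convert the base measure from $e^{-\mathcal{H}_r^{\n}}\mA$ to $e^{-\widetilde{\mathcal{H}}_r^{\n}}\mA$ introduces the explicit factor $e^{\mmi\cdot\xr}$. Because the Radon--Nikodym density between $\muN\circ\pi_{\Ssinfty}^{-1}$ and $\mukNm\circ\pi_{\Ssinfty}^{-1}$ is an $\mathsf{x}$-free function of $\mathsf{z}$, I can swap the two measures at the cost of an $\mathsf{x}$-free factor that is absorbed into $\cref{;42}^{\n}(\mathsf{s})$. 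Finally, applying \eqref{:21o} to each of the three $\PsiN$-pieces produces the $\wPsiN$-pieces of \eqref{:34a} and \eqref{:34c}: the $\xr$-linear corrections contributed by $\wPsiNrs$ and $\wPsiNrsi$ telescope to $\xr\cdot(\mmr-\mmi)$, which combined with the $e^{\mmi\cdot\xr}$ from the Hamiltonian conversion yields the $e^{-\mmr\cdot\xr}$ prefactor in \eqref{:34a}, while the $\mathsf{s}$-linear correction from $\wPsiNssi$ is $\mathsf{x}$-free and merges into $\cref{;42}^{\n}(\mathsf{s})$.

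The main obstacle is bookkeeping rather than any conceptual difficulty: every linear-in-position contribution introduced by the three $\wPsi$ substitutions must be tracked, and one must check that the compatibility \eqref{:21h} between the sequences $\{\mmr\}$, $\{\mms\}$, $\{\mmi\}$ makes everything telescope cleanly to the single compensator $e^{-\mmr\cdot\xr}$ with no residual $\xr$-linear term. Once this is verified, \eqref{:34a}, \eqref{:34b}, and \eqref{:34c} follow simultaneously from a single algebraic rearrangement.
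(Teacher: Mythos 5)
Your overall strategy — write the conditional density from the DLR equation for $\muN$, pass to the compensated Hamiltonian $\widetilde{\mathcal{H}}_r^{\n}$ via \eqref{34y}, then push the $\xr$-linear terms into the $\wPsiN$-pieces — is the same route the paper takes; the paper simply asserts the analogue \eqref{:34p}/\eqref{:34f} for $\sABN$, invokes \eqref{:34z}, and leaves the algebra to the reader. Your additive decompositions of $\mathcal{H}_s^{\n}$ and of $\PsiN_{s,s\infty}$ over the partition $\Sr$, $\Srs$, $\Ssinfty$ are exactly the right bookkeeping.

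There is, however, a genuine gap in the swap step. The DLR computation at level $\Ss$ naturally produces the marginal $\muN\circ\pi_{\Ssinfty}^{-1}$ weighted by the ($\mathsf{z}$-dependent) partition function, not the marginal $\mukNm\circ\pi_{\Ssinfty}^{-1}$ appearing in \eqref{:34c}. You propose to interchange them ``at the cost of an $\mathsf{x}$-free factor absorbed into $\cref{;42}^{\n}(\mathsf{s})$.'' But the Radon--Nikodym derivative between the two marginals is a genuine function $g(\mathsf{z})$ of $\mathsf{z}$; it sits \emph{inside} the $\mathsf{z}$-integral defining $\tCNtilde$, coupled to the $\mathsf{x}$-dependent factor $e^{-\wPsiNrsi(\mathsf{x},\mathsf{z})}$. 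It is not a scalar and cannot be pulled out into the normalization $\cref{;42}^{\n}(\mathsf{s})$, which is a function of $\mathsf{s}$ alone. Inserting $g(\mathsf{z})$ in general alters the $\mathsf{x}$-dependence of $\tCNtilde$, so the swap as stated does not reproduce \eqref{:34c}. What the downstream argument (in particular the ratio estimate \eqref{:42c} in Lemma~\ref{l:42}) actually uses is only that the $\mathsf{z}$-measure in $\tCNtilde$ is $\mathsf{x}$-free and supported on $\Hrk$, with the only $\mathsf{x}$-dependence sitting in the factor $e^{-\wPsiNrsi(\mathsf{x},\mathsf{z})}$; so you should retain whichever $\mathsf{x}$-free $\mathsf{z}$-measure actually falls out of the DLR calculation rather than claiming it collapses to a constant. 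A secondary caution: the telescoping of the $\xr$-linear terms, which you flag as bookkeeping ``to be checked,'' is sign-sensitive — with \eqref{:21o} read literally, the contributions from $\wPsiNrs$, $\wPsiNrsi$, and the base-measure conversion $e^{-\mmi\cdot\xr}$ sum to $e^{-\xr\cdot(2\mmr-\mmi)}$, not $e^{-\mmr\cdot\xr}$; the cancellation closes only with the opposite sign on the linear compensator, so the bookkeeping must be done explicitly, not asserted.
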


\begin{proof}
 \lref{l:34} is immediate from \eqref{:32a} and \eqref{:32c}.  
 Indeed, recall that $ \muN $ is a $ (\PhiN ,\PsiN ) $-canonical Gibbs measure 
 by the assumption \thetag{2} of \thetag{H.2}. 
Then from this and noting \eqref{:30a}, we deduce that 
the Radon-Nikodym density $ \sABN $ given by \eqref{:32c} satisfies 
\begin{align}\label{:34p}&
\sABN (\mathsf{x}) = \mathrm{const.}\  e^{- \PsiNrs (\mathsf{x},\mathsf{s}) } {\tCN } 
,\end{align}
where $\tCN $ is defined by  
\begin{align} \label{:34f}
\tCN = 1_{\Srm }(\mathsf{x}) \int _{\SSS }&
1_{\Hrk }(\pi _{\Srs }(\mathsf{s})+\mathsf{z}) 
\\&  \notag 
\cdot 
e^{ -\PsiNrsi (\mathsf{x},\mathsf{z})- \PsiNssi (\mathsf{s},\mathsf{z})}
\mukNm \circ \pi _{\Ssinfty }^{-1} (d\mathsf{z})  
.\end{align} 
We deduce from \eqref{:34z} and \eqref{:34p} that, 
 for $\mukNm $-a.e.\ $\mathsf{s} $, 
\begin{align} \label{:34d} 
 \st \ABN (\mathsf{x}) & = 
 \frac{1}{\cref{;42a}}
 e^{ - \mmi \cdot \xr } \sABN (\mathsf{x}) 
  \\ \notag & =  \frac{1}{{\cref{;34}^{\n }(\mathsf{s})}} 
  e^{ - \mmi \cdot \xr }   
 e^{- \PsiNrs (\mathsf{x},\mathsf{s}) } {\tCN }
. \end{align}
Here $ \Ct ^{\n }\label{;34}(\mathsf{s})$ is the normalization 
\begin{align} \label{:34e}&
\cref{;34}^{\n }(\mathsf{s}) = \int _{\SSS }
e^{ - \mmi \cdot \xr  -\PsiNrs (\mathsf{x},\mathsf{s})} \tCN 
e^{-\widetilde{\mathcal{H}}_{r}^{\n } (\mathsf{x}) 
 } \mA 
.\end{align}
Therefore, we deduce \eqref{:34a} 
from \eqref{:34d} combined with \eqref{:34c} and 
\eqref{:34f}, and the definition of $  \wPsiNrs $.  
\end{proof}

The quasi-Gibbs property consists of two conditions: 
\eqref{:qg1} and \eqref{:qg2}. We have already proved \eqref{:qg1} by \lref{l:31}. 
Therefore, it only remains to prove \eqref{:qg2}. This task will be carried out 
in the next two sections. We now explain the strategy of the proof of \eqref{:qg2}.  

By taking the representation \eqref{:32b} into account, 
the proof consists of two kinds of limit procedure: 
\eqref{:35a} $ \n \to \infty $ and then \eqref{:35b}  $ s \to \infty $, 
which involve the following convergence. 
\begin{align}& \label{:35a} 
\limi{\n } \muABN =  \muABs , \quad \quad 
\limi{\n } \mukNm \circ \pi _{ \Srs }^{-1} = \murk \circ \pi _{ \Srs }^{-1}
,\\ & \label{:35b}
\limi{s} \muABs = \muA 
.\end{align}
Note that two of these are the convergence of the {\it conditional} measures. 
In comparing with the weak convergence of $ \{ \mukNm \}$ in \lref{l:31}, 
it is noted that the convergence of the conditional measures is much more delicate. 
It involves a variety of strong convergence of 
the conditioned variable $ \mathsf{s}$. 

In each step, we prove the bounds of the densities being uniform in $ \n , s $ 
(\eqref{:42a} and \eqref{:51a}) 
and the related quantities as well as the convergence of measures as above. 
The uniformity of the bounds is the crucial point of the proof. 
We emphasize that we can carry out the proof because we treat the cut-off measures 
$ \{ \mukNm  \} $ defined by \eqref{:30a}. This cut-off is done by the set $ \Hrk $. 
Therefore, the assumption \thetag{H.3} plays a significant role in the proof of \tref{l:21}. 

The first step consists of four lemmas. 
Recall the expressions \eqref{:32b} and \eqref{34y}. 
We have already proved the uniform bound of 
$  \int _{\Srm } e^{-\widetilde{\mathcal{H}}_{r}^{\n } (\mathsf{x})}\mA $ 
in \lref{l:33}, and will prove that for $ \st \ABN $ in \lref{l:42}. 
We then prove weak convergence 
$ \limi{\n } \mukNm \circ \pi _{ \Srs }^{-1} = \murk \circ \pi _{ \Srs }^{-1}$
 and $ L^1$ convergence of their densities (\lref{l:43}, \pref{l:45}). 
In this schema, we will have to prove the convergence of both 
 $  \st \ABN (\mathsf{x})  $ and $ e^{-\widetilde{\mathcal{H}}_{r}^{\n } (\mathsf{x})} $. 
Since the convergence of $ e^{-\widetilde{\mathcal{H}}_{r}^{\n } (\mathsf{x})} $ 
has been done by \lref{l:32} and \lref{l:33}, 
we will concentrate on that for $  \st \ABN (\mathsf{x})  $. 

The second step consists of two lemmas. In \lref{l:51}, we prove the absolute continuity of the measures $ \muABs $ and the uniform bound \eqref{:51a} of their densities $ \skABs (\mathsf{x})$. 
Finally, in \lref{l:52}, we prove the convergence of 
$ \skABs (\mathsf{x}) $ as $ s\to \infty $ 
 using martingale convergence theorems to complete the proof of 
the quasi-Gibbs property.

\section{Proof of the first step.\label{s:4}}
In \lref{l:42}, we will give both sides bounds of $  \st \ABN (\mathsf{x}) $. 
For this purpose, we control the sum of the interactions in 
\eqref{:21e} and \eqref{:21o} . 
We begin by setting  
\begin{align}\label{:41z}&
d_{\mathsf{S}^{n}_{rs}}(\mathsf{s},\mathsf{t}) = \min \{ \sum_{i=1}^n |s_i-t_i| \} 
\quad \text{ for $ \mathsf{s}, \mathsf{t}\in \mathsf{S}^{n}_{rs} $}
,\end{align}
where the minimum is taken over the labeling such that 
$ \pi_{\Srs }(\mathsf{s}) = \sum_{i=1}^n \delta _{s_i} $ and  
$ \pi_{\Srs }(\mathsf{t}) = \sum_{i=1}^n \delta _{t_i} $. 

\begin{lem} \label{l:41} 
\thetag{1} 
Set $\Ct \label{;44} (k) =  mk \cdot \mathrm{diam}\, (\Sr ) $.  
Then,  for each $ k\in \mathbb{N}   $,  
\begin{align}\label{:41a}&
\supN \sup _{r\le s < t \in \mathbb{N}   } 
\sup_{\mathsf{x}, \mathsf{x}' \in\Srm }
\sup_{\mathsf{s}\in \Hrk }
| \wPsiNrst (\mathsf{x},\mathsf{s}) -\wPsiNrst (\mathsf{x}',\mathsf{s}) |
\le \cref{;44}
.\end{align}  
\thetag{2} Let 
$ \mathsf{S}^{n}_{rs} = \{\mathsf{x}\in \SSS ;\,\mathsf{x}(\Srs ) = n \}$ 
and let $ \Hsl $ be as in \eqref{:21f}. Namely, we set 
\begin{align}\label{:41y}& 
\Hsl 
= \{ \mathsf{y}\in \mathsf{S} \, ;\, \{ 
\supN \sup _{s<t\in\mathbb{N}} \sup _{x\not=w\in S _{s}}  
\frac{| \wPsiN _{s,st} (x,\mathsf{y}) -\wPsiN _{s,st} (w,\mathsf{y}) |} 
{| x -  w |} \} 
  \le l \} 
.\end{align}
Then, for each $ n ,l  \in \mathbb{N}   $, 
\begin{align}\label{:41b} &
\supN \sup _{r\le s < t \in \mathbb{N}   } 
 \sup_{\mathsf{y},\mathsf{y}'\in \mathsf{S}^{n}_{rs} }
 \sup_{\mathsf{s}\in \Hsl }
\{ \frac
{| \wPsiNrsst (\mathsf{y},\mathsf{s}) - 
\wPsiNrsst (\mathsf{y}',\mathsf{s}) |}
{d_{\mathsf{S}^{n}_{rs}} (\mathsf{y},\mathsf{y}')} \} \le l 
.\end{align}
\\\thetag{3} 
For $ q \in  \mathbb{N}   $, we set $ B_r^{q}=\{0< |s-\Sr |<1/q \} $ and 
\begin{align}\label{:41c}&
 \Aq  =  \{ \mathsf{s} \in  \mathsf{S}^{n}_{rs}\cap \Hsl   \, ;  
 \, \mathsf{s}(B_r^{q})=0 \} 
.\end{align}
Let $ \Ct \label{;56b}=\cref{;56b} (mnq,rsl)$ be the constant defined by 
\begin{align}\label{:41d}&
\cref{;56b}   = \supN  \sup _{ {\mathsf{x}\in\Srm }} 
\sup \{ 
\frac 
{|\wPsiNrs (\mathsf{x},\mathsf{y})-\wPsiNrs (\mathsf{x},\mathsf{y}')| }%
{ d_{\mathsf{S}^{n}_{rs}}(\mathsf{y},\mathsf{y}')}  ; \, 
\mathsf{y} \not = \mathsf{y}'  \in   \Aq   \}
.\end{align}
Then we have $ \cref{;56b}   < \infty $. 
\end{lem}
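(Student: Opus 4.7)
The plan is to reduce all three parts to particle-wise Lipschitz estimates via two simple observations: $\wPsiN_{r,st}$ is additive in the particles of its first argument (so $\wPsiN_{r,st}(\mathsf{x},\mathsf{s})=\sum_{x_i\in\Sr}\wPsiN_{r,st}(\delta_{x_i},\mathsf{s})$), and the telescoping identity
\begin{equation*}
\wPsiN_{r,st}(x,\mathsf{s}) \,=\, \wPsiN_{r,rt}(x,\mathsf{s}) \,-\, \wPsiN_{r,rs}(x,\mathsf{s})
\end{equation*}
holds; this follows from \eqref{:21e}--\eqref{:21o} by a direct computation, since the $\mathfrak{m}^\n_r$ drifts cancel and $S_{rt}\setminus S_{rs}=S_{st}$. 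This identity is what lifts the single-variable Lipschitz bound built into $\Hrk$ for $\wPsiN_{r,rs}$ to one for $\wPsiN_{r,st}$.

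For \thetag{1}, the telescoping identity together with $\mathsf{s}\in\Hrk$ yields, via the triangle inequality, the single-particle estimate $|\wPsiN_{r,st}(\delta_x,\mathsf{s})-\wPsiN_{r,st}(\delta_w,\mathsf{s})|\le 2k|x-w|$. Fixing any labeling of the $m$ particles of $\mathsf{x},\mathsf{x}'\in\Srm$ and bounding $|x_i-x'_i|\le\mathrm{diam}(\Sr)$ then gives a bound of order $mk\cdot\mathrm{diam}(\Sr)$, which coincides with $\cref{;44}$ up to an inessential numerical factor.

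For \thetag{2}, the key observation is that every particle of $\mathsf{y}\in\mathsf{S}^n_{rs}$ lies in $\Srs\subset\Ss$, and a direct comparison of the defining formulas gives $\wPsiN_{rs,st}(\delta_y,\mathsf{s})=\wPsiN_{s,st}(\delta_y,\mathsf{s})$ for $y\in\Srs$. The hypothesis $\mathsf{s}\in\Hsl$ provides the single-particle Lipschitz bound $\le l$ for $\wPsiN_{s,st}$ directly, and summing over the $n$ particles under the labeling that realises $d_{\mathsf{S}^n_{rs}}(\mathsf{y},\mathsf{y}')$ produces the required estimate with Lipschitz constant exactly $l$.

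Part \thetag{3} is where I expect the main work. The drift in $\wPsiN_{r,rs}(\mathsf{x},\cdot)$ is independent of the second configuration, so it suffices to control the change in $\sum_{i,j}\PsiN(x_i,y_j)$. The condition $\mathsf{s}(B_r^q)=0$ forces every particle of $\mathsf{y}$ and $\mathsf{y}'$ into $\overline{\Ss}\cap\{|y-\Sr|\ge 1/q\}$, so every relevant pair $(x_i,y_j)$ stays in the compact set $\overline{\Sr}\times(\overline{\Ss}\cap\{|y-\Sr|\ge 1/q\})$, which is disjoint from the singular diagonal. Assumption \thetag{5} of \thetag{H.2} provides $\PsiN\to\Psi$ compactly in $C^1$ off the diagonal, so $\nabla_y\PsiN$ is uniformly bounded on this compact set in $\n$. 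Applying the mean value theorem along paths that remain inside $\{|y-\Sr|\ge 1/q\}$ then delivers the single-pair Lipschitz estimate. The delicate case is $d=1$, where the region has two components and no such path exists when $y_j$ and $y'_j$ lie in opposite ones; in that situation, however, $|y_j-y'_j|\ge 2(b_r+1/q)$, and the uniform bound on $|\PsiN|$ on the same compact set gives a Lipschitz ratio of order $1/b_r$. Summing over $i\le m$ and $j\le n$ under an optimal labeling then yields $\cref{;56b}<\infty$.
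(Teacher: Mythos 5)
Your proposal is correct and takes essentially the same route as the paper: reduce to single-particle (or single-pair) Lipschitz bounds, using the additivity of $\wPsiN_{r,st}$ and $\wPsiN_{rs,st}$ in their arguments and, for part \thetag{3}, the compact $C^1$ convergence from \eqref{:21d} on the compact set off the diagonal enforced by the exclusion of $B_r^q$. The telescoping identity you spell out for part \thetag{1} (with its attendant and harmless factor $2$ in $\cref{;44}$), and your explicit treatment of the disconnected annular region when $d=1$ in part \thetag{3}, are small refinements of an argument the paper presents very tersely.
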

\begin{proof}
\eqref{:41a} follows from \eqref{:21e}, \eqref{:21o}, and \eqref{:21f} immediately. 

We next prove \eqref{:41b}. 
Let $ \{ y_i \}_i^n $ and $ \{ y'_i \}_i^n $ be labels such that 
$  \pi_{\Srs }(\mathsf{y}) = \sum_{i=1}^n \delta _{y_i} $ and that 
$  \pi_{\Srs }(\mathsf{y}') = \sum_{i=1}^n \delta _{y'_i} $. 
Then we have 
\begin{align}\label{:41e}
\frac
{| \wPsiNrsst (\mathsf{y},\mathsf{s}) - \wPsiNrsst (\mathsf{y}',\mathsf{s}) |}
{\sum_{i=1}^n |y_i-y'_i|} 
&=
\frac
{| \sum_{i=1}^n \{\wPsiNrsst (y_i,\mathsf{s}) - \wPsiNrsst (y'_i,\mathsf{s}) \}|}
{\sum_{i=1}^n |y_i-y'_i|} 
\\ \notag & \le  
\frac
{\sum_{i=1}^n | \wPsiNrsst (y_i,\mathsf{s}) - \wPsiNrsst (y'_i,\mathsf{s}) | }
{\sum_{i=1}^n |y_i-y'_i|} 
\\ \notag & \le 
\max_{i=1,\ldots,n}\{\frac
{| \wPsiNrsst (y_i,\mathsf{s}) - \wPsiNrsst (y'_i,\mathsf{s}) | } { |y_i-y'_i|}   \} 
\\ \notag &
\le l 
.\end{align}
Here we used the inequality 
$ \{\sum_i^n a_i\} /\{ \sum_i^n b_i \} \le 
\max  \{ a_m/b_m ;m=1,\ldots,n \}$ valid for $ a_i \ge 0 $ and $ b_j > 0 $ 
in the third line. We also used \eqref{:21f} and $ \Srs \subset \Ss $ in the last line. 
Taking the maximum of the labels on the left-hand side of \eqref{:41e}, 
we obtain \eqref{:41b}.  

The proof of \eqref{:41d} is similar to \eqref{:41b}. Indeed, in the same fashion as above, 
we deduce that 
\begin{align}\label{:41f}
\frac
{| \wPsiNrst (\mathsf{x},\mathsf{y}) - \wPsiNrst (\mathsf{x},\mathsf{y}') |}
{\sum_{i=1}^n |y_i-y'_i|} 
& \le 
\max_{i=1,\ldots,n}\{\frac
{| \wPsiNrst (\mathsf{x},y_i) - \wPsiNrst (\mathsf{x},y'_i) | } { |y_i-y'_i|}   \} 
\\ \notag &
= \max_{i=1,\ldots,n}\{\frac
{| \PsiNrst (\mathsf{x},y_i) - \PsiNrst (\mathsf{x},y'_i)  | } { |y_i-y'_i|}   \} 
.\end{align}
Here the second line follows from 
$$ \wPsiNrst (\mathsf{x},y_i) - \wPsiNrst (\mathsf{x},y'_i) 
= \PsiNrst (\mathsf{x},y_i) - \PsiNrst (\mathsf{x},y'_i) 
.$$
Since $ \PsiN $ converge to $ \Psi $ compactly and uniformly 
in $ C^1( S \! \times \! S \backslash \{ x=y \}) $ 
by the assumption \eqref{:21d}, and $ |x_k-y_i|\ge 1/q$ by \eqref{:41c}, 
we deduce the claim $ \cref{;56b}   < \infty $ from \eqref{:41f}. 
\end{proof}

\begin{lem} \label{l:42} 
Let $ \Ct \label{;44c} =
\cref{;33}(\n _0 ) e^{ \{\supN |\mmr | \}  4 m b_r \cref{;44} }$. 
Then, for $\mukNm $-a.e.\ $\mathsf{s} $, it holds that 
\begin{align} \label{:42a}&\quad \quad 
\cref{;44c}^{-1} \le \st \ABN (\mathsf{x}) \le \cref{;44c} 
\quad \text{ for all $ \mathsf{x} \in\Srm ,\  r<s \in  \mathbb{N}   ,\ \text{and }\n _0 \le \n \in \mathbb{N}    $. }
\end{align}
\end{lem}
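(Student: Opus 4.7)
My plan is to start from the expression \eqref{:34a} of \lref{l:34},
\[
\st\ABN(\mathsf{x}) \;=\; \frac{1}{\cref{;42}^{\n }(\mathsf{s})}\, e^{-\mmr\cdot\xr - \wPsiNrs(\mathsf{x},\mathsf{s})}\, \tCNtilde,
\]
and reduce the two-sided bound for $\st\ABN$ to controlling the oscillation of the numerator $F(\mathsf{x}) := e^{-\mmr\cdot\xr - \wPsiNrs(\mathsf{x},\mathsf{s})}\tCNtilde$ over $\mathsf{x}\in\Srm$, uniformly in $\n$, $s$, and $\mukNm$-typical $\mathsf{s}$. Concretely, if I can establish $F(\mathsf{x})\le C\, F(\mathsf{x}')$ for all $\mathsf{x},\mathsf{x}'\in\Srm$, then integrating $F(\mathsf{x}')$ against $e^{-\widetilde{\mathcal{H}}_r^{\n }(\mathsf{x}')}\mA(\mathsf{x}')$ over $\Srm$ yields $F(\mathsf{x})\int_{\Srm} e^{-\widetilde{\mathcal{H}}_r^{\n }(\mathsf{x}')}\mA(\mathsf{x}') \le C\, \cref{;42}^{\n }(\mathsf{s})$, so \lref{l:33} (which lower-bounds this integral for $\n \ge \n_0$) gives $\st\ABN(\mathsf{x}) \le C\,\cref{;33}(\n_0)$. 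The reverse inequality $F(\mathsf{x}) \ge C^{-1} F(\mathsf{x}')$ supplies the matching lower bound.

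For the uniform oscillation bound I would handle the three factors of $F$ separately. \emph{(i) Drift.} Since each $\mathsf{x}\in\Srm$ has at most $m$ points in $\Sr$, one has $|\xr|\le m b_r$, so the contribution of $e^{-\mmr\cdot(\xr-\xrdash)}$ is dominated by $\exp(2\{\supN|\mmr|\} m b_r)$, with $\supN|\mmr|<\infty$ by \eqref{:21h}. \emph{(ii) Inner interaction $\wPsiNrs$.} By \eqref{:30a}, $\mukNm$-a.e.\ $\mathsf{s}\in\Hrk$; reading \lref{l:41}\thetag{1} with indices $(s_{\text{Lem}},t_{\text{Lem}})=(r,s)$ then gives $|\wPsiNrs(\mathsf{x},\mathsf{s})-\wPsiNrs(\mathsf{x}',\mathsf{s})|\le\cref{;44}$.

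The main obstacle is \emph{(iii) the ratio $\tCNtilde(\mathsf{x},\mathsf{s})/\tCNtilde(\mathsf{x}',\mathsf{s})$}, because the integrand of $\tCNtilde$ in \eqref{:34c} contains the factor $e^{-\wPsiNrsi(\mathsf{x},\mathsf{z})}$ with $\mathsf{z}\in\Ssinfty$, and $\Ssinfty$ is not directly captured by the defining inequality of $\Hrk$. I would overcome this by combining two observations. First, \thetag{H.2}\thetag{2} guarantees that $\mathsf{z}$ has finite support $\mukNm$-a.s., so I may pick $t=t(\mathsf{z},\n)$ with $\mathsf{z}\subset S _t$ and conclude $\wPsiNrsi(\cdot,\mathsf{z})=\wPsiNrst(\cdot,\mathsf{z})$. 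Second, since $\pi_{\Srs}(\mathsf{s})$ is supported in $\Srs$, which is disjoint from $ S _{st}$, we have $\wPsiNrst(\cdot,\mathsf{z})=\wPsiNrst(\cdot,\pi_{\Srs}(\mathsf{s})+\mathsf{z})$, and the indicator $1_{\Hrk}(\pi_{\Srs}(\mathsf{s})+\mathsf{z})$ in \eqref{:34c} lets me apply \lref{l:41}\thetag{1} inside the integrand (with indices $(s_{\text{Lem}},t_{\text{Lem}})=(s,t)$) to obtain $|\wPsiNrsi(\mathsf{x},\mathsf{z})-\wPsiNrsi(\mathsf{x}',\mathsf{z})|\le\cref{;44}$. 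The remaining factor $e^{-\wPsiNssi(\mathsf{s},\mathsf{z})}$ is $\mathsf{x}$-independent and cancels in the ratio, yielding $\tCNtilde(\mathsf{x},\mathsf{s})/\tCNtilde(\mathsf{x}',\mathsf{s})\le e^{\cref{;44}}$. Combining (i)--(iii) produces a constant $C$ of the form $\exp(2\{\supN|\mmr|\}mb_r + 2\cref{;44})$, which together with \lref{l:33} yields \eqref{:42a}.
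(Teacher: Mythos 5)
Your proposal is correct and follows essentially the same route as the paper: both write the ratio $\st\ABN(\mathsf{x})/\st\ABN(\mathsf{x}')$ using \lref{l:34}, control the drift factor by $|\xr-\xrdash|\le 2mb_r$, control the inner factor $e^{-\wPsiNrs}$ via \lref{l:41}\thetag{1} applied to $\mathsf{s}\in\Hrk$ (which holds $\mukNm$-a.e.\ by the cut-off in \eqref{:30a}), and control the ratio $\tCNtilde/\tCNNtilde$ by noting that \thetag{H.2}\thetag{2} lets one replace $\wPsiNrsi$ by $\wPsiNrst$ so that \eqref{:41a} applies inside the integrand under the indicator $1_{\Hrk}(\pi_{\Srs}(\mathsf{s})+\mathsf{z})$, after which the common $\mathsf{x}$-independent factor cancels. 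The final normalization step — integrating the two-sided pointwise bound against $e^{-\widetilde{\mathcal{H}}_r^{\n}}\Lambda$ on $\Srm$ and invoking \lref{l:33} — is exactly the paper's. Your explicit justification that the indicator passes the configuration $\pi_{\Srs}(\mathsf{s})+\mathsf{z}$ into $\Hrk$ and that $\wPsiNrst(\cdot,\mathsf{z})=\wPsiNrst(\cdot,\pi_{\Srs}(\mathsf{s})+\mathsf{z})$ because $\Srs$ and $S_{st}$ are disjoint is a useful clarification of the paper's terse "by \eqref{:41a}", but it is the same argument.
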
 
\begin{proof} 
Since the diameter of $ \Sr $ is $ b_r $ and the number of the particles in $ \Sr $ is 
$ m $, we see that 
\begin{align*}&
 | \xrdash - \xr | \le 2 m b_r \quad \text{ for all } \mathsf{x}, \mathsf{x}' \in \Srm 
.\end{align*}
Hence we deduce from this, \eqref{:34a}, and \eqref{:41a} that 
\begin{align}\label{:42b}
\frac{\st \ABN  (\mathsf{x})}{\st \ABN  (\mathsf{x}')} 
 &= 
\frac
{e^{ - \mmr\cdot \xr - \wPsiNrs (\mathsf{x},\mathsf{s})} \tCNtilde }
{e^{ - \mmr\cdot \xrdash - \wPsiNrs (\mathsf{x}' ,\mathsf{s})} \tCNNtilde } 
\le e^{ |\mmr | 2 m b_r \cref{;44} } \frac{\tCNtilde }{\tCNNtilde } 
.\end{align}

We set $ \Xi = \Xi _{r}^{m}$ and $\hat{\Xi } = \hat{\Xi }_{r}^{m}$ by 
\begin{align*}&
\Xi = \{ (\n , s , \mathsf{x},\mathsf{x}')\, ;\, 
\n \in \mathbb{N},\ r<s \in\mathbb{N},\ \mathsf{x},\mathsf{x}' \in\Srm 
\} ,\\ \notag &
\hat{\Xi } = \{ (\n , s , t , \mathsf{x},\mathsf{x}')\, ;\, 
\n \in \mathbb{N},\ r<s < t \in\mathbb{N},\ \mathsf{x},\mathsf{x}' \in\Srm 
\} 
.\end{align*}
Then, by \eqref{:34c}, we have for $\mukNm $-a.e.\ $\mathsf{s} $ 
\begin{align}\label{:42c}&
\sup_{\Xi }
\{ \frac{ \tCNtilde }{\tCNNtilde }\}  
\\ \notag = & 
\sup_{\Xi }
 \{ 
\frac{\int _{\SSS }1_{\Hrk }(\pi _{\Srs }(\mathsf{s})+
\mathsf{z})
e^{ -\wPsiNrsi (\mathsf{x},\mathsf{z})- \wPsiNssi (\mathsf{s},\mathsf{z})}
\mukNm \circ \pi _{\Ssinfty }^{-1} (d\mathsf{z}) }
{\int _{\SSS }1_{\Hrk }(\pi _{\Srs }(\mathsf{s})+
\mathsf{z})
e^{ -\wPsiNrsi (\mathsf{x}',\mathsf{z})- \wPsiNssi (\mathsf{s},\mathsf{z})}
\mukNm \circ \pi _{\Ssinfty }^{-1} (d\mathsf{z})
} \} 
\\ \notag = & 
\sup_{\hat{\Xi } }
\{ 
\frac{\int _{\SSS }1_{\Hrk }(\pi _{\Srs }(\mathsf{s}) + \mathsf{z})
e^{ -\wPsiNrst (\mathsf{x},\mathsf{z})- \wPsiNrsst (\mathsf{s},\mathsf{z})}
\mukNm \circ \pi _{\Ssinfty }^{-1} (d\mathsf{z}) }
{\int _{\SSS }1_{\Hrk }(\pi _{\Srs }(\mathsf{s}) + \mathsf{z})
e^{ -\wPsiNrst (\mathsf{x}',\mathsf{z})- \wPsiNrsst (\mathsf{s},\mathsf{z})}
\mukNm \circ \pi _{\Ssinfty }^{-1} (d\mathsf{z})
} \} 
\\ \notag 
\le &\ e^{\cref{;44}}\quad \quad \quad \text{ by }\eqref{:41a} 
.\end{align}
Here we used $ \muN (\mathsf{s} ( S )\le \nN )= 1$ in the third line. 

Let $ \Ct \label{;45} = 2 \supN |\mmr | m b_r \cref{;44} $. 
Then \eqref{:42b} and \eqref{:42c} yield that 
\begin{align*} &
\sup_{\Xi }
\frac{ \st \ABN (\mathsf{x})} {\st \ABN (\mathsf{x}')} 
\le e^{ \cref{;45}}  \quad \text{ for $\mukNm $-a.e.\ $\mathsf{s} $}
.\end{align*}
Hence for $\mukNm $-a.e.\ $\mathsf{s} $, we see that for all 
 $ \mathsf{x}, \mathsf{x}' \in\Srm ,\  \ r<s \in  \mathbb{N}    ,\ \text{and } \n \in  \mathbb{N}   $, 
\begin{align} & \label{:42d} 
e^{-\cref{;45}} {\st \ABN (\mathsf{x}')}\le 
{\st \ABN (\mathsf{x})}\le 
e^{\cref{;45}} {\st \ABN (\mathsf{x}')}
.\end{align}
Multiply \eqref{:42d} by 
$ 1_{\Srm }(\mathsf{x}')e^{-\widetilde{\mathcal{H}}_{r}^{\n } (\mathsf{x}')}$ 
and integrate w.r.t.\ $ \Lambda (d\mathsf{x}')$. 
Note that by \eqref{:32c}, we have 
$ \int _{\Srm }{\st \ABN (\mathsf{x}')}e^{-\widetilde{\mathcal{H}}_{r}^{\n } (\mathsf{x}')} 
\Lambda (d\mathsf{x}') =1$. 
Then we deduce that for $\mukNm $-a.e.\ $\mathsf{s} $, 
\begin{align}&\notag 
e^{-\cref{;45}} 
\le {\st \ABN (\mathsf{x})} 
\int _{\Srm }e^{-\widetilde{\mathcal{H}}_{r}^{\n } (\mathsf{x}')} 
\Lambda (d\mathsf{x}') 
\le 
e^{\cref{;45}} 
\quad \text{ for all }\mathsf{x} \in\Srm 
.\end{align}
This combined with \lref{l:33} yields \eqref{:42a}. 
\end{proof}

%
\begin{lem} \label{l:43} 
$ \mukNm \circ \pi _{ \Srs }^{-1}$ 
converges weakly to $ \murk \circ \pi _{ \Srs }^{-1} $ as 
$ \n \to\infty $. 
\end{lem}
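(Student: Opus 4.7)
The plan is to lift the weak convergence $\mukNm \to \murk$ provided by \lref{l:31} to the pushforwards under the projection $\pi_{\Srs}$. The obstacle is that $\pi_{\Srs}\colon \SSS \to \SSS$ is not globally vaguely continuous: it fails to be continuous precisely at configurations that place a particle on the topological boundary $\partial \Srs \subset \partial \Sr \cup \partial \Ss$. So the central step is to show that this set of discontinuities is $\murk$-null; the extended continuous mapping theorem for a.s.-continuous bounded maps will then give, for every $g \in C_b(\SSS)$,
\begin{align*}
\int g\, d(\mukNm \circ \pi_{\Srs}^{-1}) = \int g \circ \pi_{\Srs}\, d\mukNm \longrightarrow \int g \circ \pi_{\Srs}\, d\murk = \int g\, d(\murk \circ \pi_{\Srs}^{-1}).
\end{align*}

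To verify that $\murk$ charges no configuration with a particle on $\partial \Srs$, I would first use the uniform bound \eqref{:21b}. For each $\n$, since the sphere $\partial \Sr$ is Lebesgue-null,
\begin{align*}
\mukNm(\mathsf{s}(\partial \Sr) \ge 1) \le \muN(\mathsf{s}(\partial \Sr) \ge 1) \le \int_{\partial \Sr} \rNone(x)\, dx = 0,
\end{align*}
and likewise for $\partial \Ss$. To transfer this null-boundary property to the limit, I would argue that the correlation-function convergence \eqref{:21a}--\eqref{:21b} classically implies $\muN \to \mu$ weakly (the moment method for point processes with dominated correlation functions), so that the inequality $\mukNm \le \muN$ passes to the limit via nonnegative test functions in $C_b(\SSS)$ and yields $\murk \le \mu$. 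Assumption \thetag{H.1} then gives $\mu(\mathsf{s}(\partial \Srs) > 0) = 0$, whence $\murk(\mathsf{s}(\partial \Srs) > 0) = 0$.

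The hard part is essentially this null-boundary step; no new uniform interaction estimate is needed, because all of the work involving $\PsiN$ and the set $\Hrk$ has already been absorbed into the construction of $\murk$ in \lref{l:31}. Once the discontinuity set of $\pi_{\Srs}$ is known to be $\murk$-negligible, the extended continuous mapping theorem applied to each $g \in C_b(\SSS)$ produces the asserted weak convergence of the pushforwards.
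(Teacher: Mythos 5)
Your proof is correct and follows essentially the same route as the paper: identify the discontinuity set of $\pi_{\Srs}$ as the configurations charging $\partial\Srs$, show this set is $\murk$-null via $\murk\le\mu$ together with \thetag{H.1}, and apply the continuous mapping theorem. The only cosmetic difference is that you re-derive $\murk\le\mu$ from weak convergence of $\muN\to\mu$, whereas the paper gets it immediately from \eqref{:qg1} in \lref{l:31}, which already identifies $\murk$ as $\mu(\cdot\cap\mathsf{S}_{r,k}^m)$.
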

\begin{proof} 
Let $ E $ be the discontinuity points of $ \pi _{ \Srs }$, namely 
$$ E = \{ \mathsf{s}\in\SSS \, ;\, 
\lim_{n\to\infty}\pi _{\Srs }(\mathsf{s}_n)
\not= \pi _{\Srs }(\mathsf{s}) \text{ for some }
\{ \mathsf{s}_n  \} \text{ such that }
\limi{n} \mathsf{s}_n =\mathsf{s}\}  
.$$
Then by \thetag{H.1}, we deduce that $ \murk (E)\le \mu (E)=0 $. 
Since $ \mukNm $ converge weakly to $  \murk $ by \lref{l:31} and 
the discontinuity points of 
$ \pi _{ \Srs }^{-1} $ are $  \murk $-measure zero, we obtain \lref{l:43}.  
\end{proof}

Let $\mathcal{H}_{rs} = \mathcal{H}_{\Srs }^{\Phi ,\Psi }$ and 
$ \widetilde{\mathcal{H}}_{rs}^{\n } $ 
such that 
\begin{align}\label{:44a} 
 \widetilde{\mathcal{H}}_{rs}^{\n } (\mathsf{x}) 
 &= 
 \sum_{x_i\in \Srs } \{\PhiN ( x_i )  - \mmi  \cdot x_i \} + 
 \sum_{x_i, x_j\in \Srs , i < j } \PsiN ( x_i, x_j)  
.\end{align}
By \eqref{:21a} and \eqref{:21b}, we see that 
$ \mukNm \circ \pi _{ \Srs }^{-1} $ and 
$ \murk \circ \pi _{ \Srs }^{-1} $ are absolutely continuous w.r.t.\ 
$ e^{- \widetilde{\mathcal{H}}_{rs}^{\n } }\Lambda $ and 
$ e^{-\mathcal{H}_{rs}}\Lambda $, respectively. 
Hence, we denote by $ \Delta ^{\n }   $ and $ \Delta $ 
their Radon-Nikodym densities, respectively. Namely, 
\begin{align}\label{:44b}&
 \Delta ^{\n } (\mathsf{s}) 
= \frac{\mukNm \circ \pi _{ \Srs }^{-1}(d\mathsf{s})}
{ e^{- \widetilde{\mathcal{H}}_{rs}^{\n } }\Lambda (d\mathsf{s})}
,\quad 
 \Delta (\mathsf{s}) 
= \frac{ \murk \circ \pi _{ \Srs }^{-1} (d\mathsf{s})}
{ e^{-\mathcal{H}_{rs}}\Lambda (d\mathsf{s})}
.\end{align}

The following is the main result of this section. 
\begin{prop} \label{l:45}
$ \Delta ^{\n }  e^{- \widetilde{\mathcal{H}}_{rs}^{\n } } $ converges to 
$ \Delta e^{-\mathcal{H}_{rs}}$ in $ \LABone $ as $ \n \to\infty $. 
\end{prop}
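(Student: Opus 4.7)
The plan is to apply Scheffé's lemma. Let $h^{\n} := \Delta^{\n} e^{-\widetilde{\mathcal{H}}_{rs}^{\n}}$ and $h := \Delta e^{-\mathcal{H}_{rs}}$; by \eqref{:44b} these are the non-negative $\Lambda$-densities of $\mukNm\circ\pi_{\Srs}^{-1}$ and $\murk\circ\pi_{\Srs}^{-1}$ respectively. Scheffé's lemma reduces the goal to checking (i) $\int h^{\n}\,d\Lambda\to\int h\,d\Lambda<\infty$ and (ii) $h^{\n}\to h$ for $\Lambda$-a.e.\ $\mathsf{s}$.

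Step (i) is immediate from \lref{l:43}: applying the weak convergence $\mukNm\circ\pi_{\Srs}^{-1}\to\murk\circ\pi_{\Srs}^{-1}$ to the constant test function $1\in C_b(\SSS)$ yields $\int h^{\n}\,d\Lambda=\mukNm(\SSS)\to\murk(\SSS)=\int h\,d\Lambda<\infty$.

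Step (ii) is the substantive part. I would use a DLR-type conditional representation: since $\muN$ is a $(\PhiN,\PsiN)$-canonical Gibbs measure by \thetag{3} of \thetag{H.2}, conditioning on $\pi_{\Srs^c}$ (i.e., on configurations in $\Sr\cup\Ssinfty$) gives a density for the $\Srs$-marginal proportional to $e^{-\mathcal{H}_{rs}^{\n}-\PsiN_{\mathrm{cross}}}$; integrating this conditional density against $\mukNm\circ\pi_{\Srs^c}^{-1}$ yields an explicit formula for $h^{\n}(\mathsf{s})$, analogous to the representation of $\st\ABN$ in \lref{l:34} but with the roles of $\Sr$ and $\Srs$ interchanged. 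This formula factors as $e^{-\widetilde{\mathcal{H}}_{rs}^{\n}(\mathsf{s})}$ times a conditional integral over $\mathsf{z}\in\Ssinfty$ incorporating $1_{\Srm}$, $1_{\Hrk}$, and the compensated interaction $\wPsiN$. Pointwise a.e.\ convergence of $h^{\n}$ to $h$ then reduces to combining the following three ingredients: the $\Srs$-analogue of \lref{l:32} giving $e^{-\widetilde{\mathcal{H}}_{rs}^{\n}}\to e^{-\mathcal{H}_{rs}}$ via \thetag{4} and \thetag{5} of \thetag{H.2}; dominated convergence applied to the conditional integral, with the integrand uniformly dominated by means of the correlation bound \eqref{:21b} and the uniform Lipschitz estimates of \lref{l:41} valid on $\Hrk$; and \lref{l:43} to pass to the limit in the outside measure. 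Combining these conclusions sectorwise on $\SSS=\bigsqcup_{n\ge 0}\{\mathsf{s}:\mathsf{s}(\Srs)=n\}$ produces the desired a.e.\ convergence on all of $\SSS$.

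The main obstacle is the convergence of the conditional integral over $\Ssinfty$. The integrand depends on $\n$ simultaneously through the outside distribution $\mukNm\circ\pi_{\Ssinfty}^{-1}$ and through the interaction $\PsiN$, whose limit $\Psi$ is singular along the diagonal. The cutoff by $\Hrk$ imposed through the definition \eqref{:30a} of $\mukNm$, which is the mechanism by which hypothesis \thetag{H.3} enters, supplies via \lref{l:41} precisely the uniform Lipschitz bounds on $\wPsiN$ required to dominate the integrand and interchange the limit with the integration. Once a.e.\ convergence is established, Scheffé's lemma combined with the total-mass convergence from Step (i) yields $h^{\n}\to h$ in $L^1(\SSS,\Lambda)$.
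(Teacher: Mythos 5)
Your proposal uses Scheff\'e's lemma, whereas the paper's proof runs through an $L^1$ relative-compactness argument: \lref{l:44} reduces the claim to relative compactness of $\{\Delta^{\n} e^{-\widetilde{\mathcal{H}}_{rs}^{\n}}\}_{\n}$ in $\LABone$ (the limit point is then pinned down by the weak convergence in \lref{l:43}), and that compactness is obtained through a three-stage truncation (\lref{l:46}--\lref{l:48}: tail in particle number, then the $\Hsl$-cutoff supplied by \thetag{H.3}, then removal of particles near $\partial\Sr$), after which the residual family $\{\DeltaN_l\}_{\n}$ is shown to be equicontinuous and uniformly bounded on $\Aq$ (\lref{l:4(}) so that Ascoli--Arzel\'a applies. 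Nowhere does the paper establish $\Lambda$-a.e.\ pointwise convergence of $h^{\n}$; the compactness route deliberately avoids that.

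The gap in your proposal is precisely Step (ii). To apply Scheff\'e you must prove $h^{\n}(\mathsf{s})\to h(\mathsf{s})$ for $\Lambda$-a.e.\ $\mathsf{s}$, where $h$ is the density of the measure $\murk\circ\pi_{\Srs}^{-1}$ that was itself obtained only as a weak subsequential limit in \lref{l:31}; there is no a priori formula for $h$ against which to verify the limit of your conditional integral. Moreover, in that conditional integral the outer measure $\mukNm\circ\pi_{\Srs^c}^{-1}$ and the integrand both vary with $\n$, the cutoff $1_{\Hrk}$ is a discontinuous indicator (so weak convergence of the outside measure does not pass through it), and the cross interaction $\PsiN(y_i,z_j)$ is singular when $y_i$ and $z_j$ approach $\partial\Sr$ or $\partial\Ss$ from opposite sides -- this is exactly what the $\Aq$-cutoff in \lref{l:48} and the Lipschitz bound \eqref{:41d} are designed to tame, and those estimates give equicontinuity rather than pointwise convergence. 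Even granting pointwise convergence to some function $g$, your Step (i) only gives $\int h^{\n}\to\int h$; Fatou then yields $\int g\le\int h$, and closing the gap $\int g=\int h$ (equivalently, no escape of mass under the pointwise limit) is not automatic -- it requires essentially the same tightness and equi-integrability estimates (\lref{l:46}--\lref{l:48}) that the paper develops. So your route, made rigorous, would not be shorter; as written it asserts the hardest step.
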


We devote the rest of this section to the proof of \pref{l:45}. 
This proof is rather long, and we will complete it 
after preparing a sequence of lemmas. 

\begin{lem} \label{l:44}
\pref{l:45} follows from the relative compactness of 
$\{\Delta ^{\n } e^{- \widetilde{\mathcal{H}}_{rs}^{\n } }\}_{\n \in \mathbb{N}   }$ in $ \LABone $. 
\end{lem}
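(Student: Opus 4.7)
The strategy is to leverage the weak convergence already furnished by \lref{l:43} together with the hypothesized relative compactness via a standard subsequence argument. The plan is to show that every subsequence of $\{\Delta^{\n} e^{-\widetilde{\mathcal{H}}_{rs}^{\n}}\}$ has a further $\LABone$-convergent subsequence whose limit must coincide with $\Delta e^{-\mathcal{H}_{rs}}$, from which convergence of the full sequence follows by the usual subsequence-of-subsequence principle.

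Concretely, I will fix an arbitrary subsequence of $\{\Delta^{\n} e^{-\widetilde{\mathcal{H}}_{rs}^{\n}}\}$, apply the relative compactness hypothesis to extract an $\LABone$-convergent sub-subsequence with some limit $g \in \LABone$, and then identify $g$. For any test function $f \in C_b(\SSS)$, on the one hand $L^1$ convergence along the sub-subsequence combined with the boundedness of $f$ (and the fact that $\Lambda$ is a probability measure) gives
\begin{align*}
\int_{\SSS} f\, \Delta^{\n} e^{-\widetilde{\mathcal{H}}_{rs}^{\n}}\, d\Lambda \longrightarrow \int_{\SSS} f\, g\, d\Lambda .
\end{align*}
On the other hand, \eqref{:44b} rewrites the left-hand side as $\int_{\SSS} f\, d(\mukNm \circ \pi_{\Srs}^{-1})$, which by \lref{l:43} converges along the full sequence (hence along the sub-subsequence) to $\int_{\SSS} f\, d(\murk \circ \pi_{\Srs}^{-1}) = \int_{\SSS} f\, \Delta e^{-\mathcal{H}_{rs}}\, d\Lambda$. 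Equating these two limits and using that $C_b(\SSS)$ is measure-determining on the Polish space $\SSS$ forces $g = \Delta e^{-\mathcal{H}_{rs}}$ $\Lambda$-almost everywhere.

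There is essentially no obstacle in this reduction; the entire argument is the standard principle that a relatively compact sequence with a unique (weak) subsequential limit converges to that limit in the ambient topology. The substantive work, namely establishing relative $\LABone$-compactness of $\{\Delta^{\n} e^{-\widetilde{\mathcal{H}}_{rs}^{\n}}\}$ itself (for which I would anticipate invoking a Dunford--Pettis-type criterion based on the uniform correlation bounds \eqref{:21a}--\eqref{:21b} together with domination estimates analogous to those of \lref{l:32} and \lref{l:33}), lies outside the present statement and is presumably carried out in the subsequent lemmas of \sref{s:4}.
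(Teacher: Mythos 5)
Your argument is correct and follows essentially the same route as the paper: relative compactness in $\LABone$ plus uniqueness of the subsequential limit identified via the weak convergence in \lref{l:43} forces convergence of the full sequence. The paper compresses this into one sentence; you simply unpack the standard subsequence-of-subsequence reasoning and the identification step, which is exactly what the paper's appeal to \lref{l:43} implicitly invokes.
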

\begin{proof}
If $\{\Delta ^{\n } e^{- \widetilde{\mathcal{H}}_{rs}^{\n } }\}_{\n \in \mathbb{N}   }$ are 
relatively compact in $ \LABone $, then their limit points are unique 
and equal to $ \Delta e^{-\mathcal{H}_{rs}}$ by \lref{l:43}. 
\end{proof}

To prove the relative compactness as above, we use various kinds of cut-off 
procedures. 

Recall that $ \mathsf{S}^{n}_{rs} = \{\mathsf{x}\in \SSS ;\,\mathsf{x}(\Srs ) = n \}$. 
We set $ \DeltaN =  \Delta ^{\n }1_{\mathsf{S}^{n}_{rs}} $. Then we have 
\begin{align}\label{:46a}&
 \Delta ^{\n } =  
\sum_{n=0}^{\infty} \DeltaN 
.\end{align}
We begin by considering a cut-off of $ \Delta ^{\n } e^{- \widetilde{\mathcal{H}}_{rs}^{\n } } $ 
according to \eqref{:46a}.  
\begin{lem} \label{l:46} 
For each $ \epsilon >0 $, there exists an $ n_0 $ such that 
\begin{align}\label{:46b}&
\supN 
\| \{  \sum_{n=n_0}^{\infty}\DeltaN \} 
  e^{- \widetilde{\mathcal{H}}_{rs}^{\n } } 
 \|_{ \LABone }
 < \epsilon 
.\end{align}
\end{lem}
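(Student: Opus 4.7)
The plan is to identify the $L^{1}$-norm in \eqref{:46b} with the $\mukNm$-measure of the event that $\Srs$ contains at least $n_{0}$ particles, and then to control that probability by a uniform first-moment bound arising from the local bound on one-point correlation functions in \thetag{H.2}.

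First, from the definition \eqref{:44b} of $\Delta^{\n}$ together with $\DeltaN = \Delta^{\n}\,1_{\mathsf{S}^{n}_{rs}}$, each summand satisfies
\begin{align*}
\int_{\SSS} \DeltaN(\mathsf{s})\, e^{-\widetilde{\mathcal{H}}_{rs}^{\n}(\mathsf{s})}\, \Lambda(d\mathsf{s})
= \mukNm\bigl(\{\mathsf{s}\in\SSS : \mathsf{s}(\Srs)=n\}\bigr).
\end{align*}
Summing over $n \geq n_{0}$ and using non-negativity of the integrand gives
\begin{align*}
\Bigl\|\Bigl\{\sum_{n=n_{0}}^{\infty} \DeltaN\Bigr\}\, e^{-\widetilde{\mathcal{H}}_{rs}^{\n}}\Bigr\|_{\LABone}
= \mukNm\bigl(\{\mathsf{s}\in\SSS : \mathsf{s}(\Srs)\geq n_{0}\}\bigr).
\end{align*}
By \eqref{:30a}, $\mukNm \leq \muN$, so the right-hand side is bounded by $\muN(\{\mathsf{s}(\Srs)\geq n_{0}\})$.

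Markov's inequality together with the defining identity \eqref{:20a} of the one-correlation function then yields
\begin{align*}
\muN\bigl(\{\mathsf{s}(\Srs)\geq n_{0}\}\bigr)
\leq \frac{1}{n_{0}}\int_{\SSS}\mathsf{s}(\Srs)\,\muN(d\mathsf{s})
= \frac{1}{n_{0}}\int_{\Srs}\rNone(x)\,dx.
\end{align*}
Since $\Srs \subset \Ss = \tilde{S}_{b_{s}}$, the bound \eqref{:21b} applied with $n=1$ provides a constant independent of $\n$ dominating $\rNone$ on $\Srs$, hence $\supN \int_{\Srs}\rNone(x)\,dx < \infty$. Choosing $n_{0}$ larger than this uniform bound divided by $\epsilon$ produces the estimate \eqref{:46b}.

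The argument presents no significant obstacle: it is essentially a tightness-in-particle-number estimate that reduces directly to the uniform local boundedness of one-point correlation functions assumed in \thetag{H.2}. The only small point to verify is that $\mathsf{S}^{n}_{rs}$ is $\sigma[\pi_{\Srs}]$-measurable, which is immediate from its definition, so that the slicing by particle number commutes with forming the Radon--Nikodym density of the pushforward $\mukNm\circ\pi_{\Srs}^{-1}$.
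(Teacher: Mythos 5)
Your proof is correct, but it takes a genuinely different route from the paper's. The paper deduces \eqref{:46b} from \lref{l:43}: since $\mukNm\circ\pi_{\Srs}^{-1}$ converges weakly to $\murk\circ\pi_{\Srs}^{-1}$, the sequence is tight by Prokhorov's theorem, and in the configuration space (with the vague topology) relative compactness amounts precisely to a uniform bound on the particle count in $\Srs$, which yields $\supN\mukNm\bigl(\bigcup_{n\ge n_0}\mathsf{S}^n_{rs}\bigr)<\epsilon$ for $n_0$ large. You instead bypass the weak-convergence machinery entirely: after the same (correct) identification of the $L^1$-norm with $\mukNm(\mathsf{s}(\Srs)\ge n_0)$, you bound this above by $\muN(\mathsf{s}(\Srs)\ge n_0)$ via $\mukNm\le\muN$ and control the tail with Markov's inequality and the uniform one-point correlation bound from \eqref{:21b} of \thetag{H.2}. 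Both arguments are sound. Your version is more elementary and quantitative — it gives an explicit rate $n_0\sim\cref{;70}(b_s)\,|\Srs|/\epsilon$ — and does not depend on \lref{l:43} at all, at the price of invoking the correlation estimate \eqref{:21b} directly (which the paper of course also assumes, so nothing is lost). The paper's version is shorter in writing and structurally reuses a lemma that is needed anyway in \pref{l:45}. One small cosmetic remark: in your first display the identity rests on $\pi_{\Srs}^{-1}(\mathsf{S}^n_{rs})=\mathsf{S}^n_{rs}$, which you correctly note follows since $\mathsf{S}^n_{rs}$ is $\sigma[\pi_{\Srs}]$-measurable.
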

\begin{proof}
By \lref{l:43}, we  see that  the sequence 
$ \{ \mukNm \circ \pi _{ \Srs }^{-1}\} $ is tight. 
Hence we deduce that for each $ \epsilon >0 $ 
there exists an $ n_0 $ such that 
\begin{align}\label{:46c}&
\supN  \mukNm  (\sum_{n=n_0}^{\infty} \mathsf{S}^{n}_{rs} )
 < \epsilon 
,\end{align}
which is equivalent to \eqref{:46b}.  
\end{proof}

According to \pref{l:45} and \eqref{:46b}, the relative compactness of 
$\{\Delta ^{\n } e^{- \widetilde{\mathcal{H}}_{rs}^{\n } }\}_{\n \in \mathbb{N}   }$ in $ \LABone $ 
follows from that of 
$ \{ \DeltaN e^{- \widetilde{\mathcal{H}}_{rs}^{\n } } \}_{\n \in \mathbb{N}   } $ 
for each $ n \in  \mathbb{N}   $. Hence, we fix $ n \in \mathbb{N}   $ in the rest of this section. 

Let $ \Hsl $ be as in \eqref{:41y}. 
We consider new sequences of cut-off measures 
$\{ \muNl   \}_{l\in\mathbb{N}}  $ such that 
\begin{align}\label{:47b}&
\muNl  = \mukNm (\cdot \cap \mathsf{S}^{n}_{rs}\cap \Hsl )
.\end{align}
Let $ \DeltaN _{l} $ be the Radon-Nikodym density of 
$ \muNl  \circ \pi _{ \Srs }^{-1}$ 
w.r.t.\ $ e^{- \widetilde{\mathcal{H}}_{rs}^{\n } }\Lambda $; that is, 
\begin{align}\label{:47c}&
 \DeltaN _{l} (\mathsf{s})=
 \frac{ \muNl  \circ \pi _{ \Srs }^{-1} (d\mathsf{s}) }
{e^{- \widetilde{\mathcal{H}}_{rs}^{\n } }\Lambda (d\mathsf{s}) }
.\end{align}

\begin{lem} \label{l:47} 
Let $ \DeltaN _{l} $ be as \eqref{:47c}. 
Then, for each $ n \in \mathbb{N }$, we have 
\begin{align}\label{:47d}&
\lim_{l\to\infty} \limsup_{\n \in \mathbb{N}   }
\| 
\DeltaN e^{- \widetilde{\mathcal{H}}_{rs}^{\n } }-
\DeltaN _{l}e^{- \widetilde{\mathcal{H}}_{rs}^{\n } }
\|_{\LABone } = 0 
.\end{align}
\end{lem}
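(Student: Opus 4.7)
The plan is to recognise the $L^1$-norm in \eqref{:47d} as the total mass of a nonnegative measure on $\SSS$ and then to bound it by $\muN(\Hsl^c)$, which tends to $0$ by \thetag{H.3}. In this sense the lemma is a bookkeeping reduction: the additional cut-off by $\Hsl$ built into $\muNl$ in \eqref{:47b} removes exactly the configurations controlled by \thetag{H.3}.

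Concretely, by \eqref{:44b}, \eqref{:46a}, and \eqref{:47c}, the densities $\DeltaN$ and $\DeltaN _{l}$ are the Radon--Nikodym derivatives w.r.t.\ $e^{-\widetilde{\mathcal{H}}_{rs}^{\n}}\Lambda$ of the push-forward measures $(\mukNm|_{\mathsf{S}^n_{rs}}) \circ \pi_{\Srs}^{-1}$ and $(\mukNm|_{\mathsf{S}^n_{rs}\cap \Hsl}) \circ \pi_{\Srs}^{-1}$, respectively. Since $\mathsf{S}^n_{rs}\cap \Hsl \subset \mathsf{S}^n_{rs}$, one has $\DeltaN \ge \DeltaN _{l}$ pointwise, so integrating against $e^{-\widetilde{\mathcal{H}}_{rs}^{\n}}\Lambda$ yields
\begin{align*}
\|\DeltaN e^{-\widetilde{\mathcal{H}}_{rs}^{\n}} - \DeltaN _{l} e^{-\widetilde{\mathcal{H}}_{rs}^{\n}}\|_{\LABone}
&= \mukNm(\mathsf{S}^n_{rs}) - \mukNm(\mathsf{S}^n_{rs} \cap \Hsl) \\
&= \mukNm(\mathsf{S}^n_{rs} \cap \Hsl^c) \; \le \; \muN(\Hsl^c),
\end{align*}
where the last inequality uses $\mukNm \le \muN$ from \eqref{:30a}.

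Next, a direct comparison of \eqref{:41y} with \eqref{:21f} shows that $\Hsl$ is the same set as $\Hrk$ of \thetag{H.3} with $(r,k)$ replaced by $(s,l)$, since both impose the same uniform Lipschitz-type bound on the truncated interaction. Applying \thetag{H.3} at the index $s$ (which is an arbitrary natural number in the assumption) thus gives $\lim_{l\to\infty} \limsup_{\n\to\infty} \muN(\Hsl^c) = 0$, and combining this with the inequality above proves \eqref{:47d}.

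There is essentially no obstacle: the substantive content of the lemma lies entirely in \thetag{H.3}. The only points to verify carefully are the two identifications above, namely that the $L^1$-difference reduces to $\mukNm(\mathsf{S}^n_{rs}\cap \Hsl^c)$ after projection by $\pi_{\Srs}$ and that the families $\Hsl$ in \eqref{:41y} and $\Hrk$ in \eqref{:21f} coincide up to renaming of indices. Both are immediate from the definitions, so the argument is short once the setup of \sref{s:3} has been digested.
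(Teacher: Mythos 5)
Your proof is correct and takes essentially the same route as the paper: you observe $\DeltaN_l e^{-\widetilde{\mathcal{H}}_{rs}^{\n}} \le \DeltaN e^{-\widetilde{\mathcal{H}}_{rs}^{\n}}$, reduce the $\LABone$-difference to $\mukNm(\mathsf{S}^n_{rs}\cap\Hsl^c)\le\muN(\Hsl^c)$, and invoke \eqref{:21g} at the index $s$. The only cosmetic differences are that you record the exact mass difference before bounding and make explicit the renaming $(r,k)\mapsto(s,l)$, which the paper leaves implicit in \eqref{:41y}.
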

\begin{proof}
Since $ \muNl  \le \mukNm $ by \eqref{:47b}, we see that 
$ 
\DeltaN _{l}e^{- \widetilde{\mathcal{H}}_{rs}^{\n } }\le 
\DeltaN e^{- \widetilde{\mathcal{H}}_{rs}^{\n } } 
$. 
This together with \eqref{:44b} and \eqref{:47c} yields 
\begin{align}\label{:47e} &
\| 
\DeltaN e^{- \widetilde{\mathcal{H}}_{rs}^{\n } } - 
\DeltaN _{l}e^{- \widetilde{\mathcal{H}}_{rs}^{\n } }
\|_{\LABone } \le \mukNm (\Hsl ^c)
.\end{align}
From $ \mukNm \le \muN $ and \eqref{:21g} 
we deduce that 
\begin{align}\label{:47f}&
\lim_{l\to\infty}\limsup_{\n \in \mathbb{N}   } \mukNm (\Hsl ^c) \le 
\lim_{l\to\infty}\limsup_{\n \in \mathbb{N}   } \muN (\Hsl ^c) = 0 
.\end{align}
Combining \eqref{:47e} and \eqref{:47f} yields \eqref{:47d}. 
 \end{proof}
According to \lref{l:47}, it only remains to prove the relative compactness of 
 $\{\DeltaN _{l}e^{- \widetilde{\mathcal{H}}_{rs}^{\n } }\}_{\n \in \mathbb{N}   }$ 
in $ \LABone $ for all sufficiently large $ l \in \mathbb{N}   $. 
Hence, we fix such an $ l \in \mathbb{N}   $ in the rest of this section. 

Let $ B_r^{q} = \{ x ; |x-\Sr | < 1/q \} \backslash \Sr $; that is, 
$ B_r^{q} $ is the  intersection of $ \Sr ^c$ and the $ 1/q$-neighborhood of $ \Sr $.  
Let  $ \Aq  $ be the subset of $  \mathsf{S}^{n}_{rs}\cap \Hsl  $ 
 with no particles in $ B_r^{q}$. Namely, 
\begin{align}\label{:48a}&
\Aq = \{ \mathsf{s} \in \mathsf{S}^{n}_{rs}\cap \Hsl  \, ;\, \mathsf{s} ( B_r^{q} ) = 0 \} 
.\end{align}

\begin{lem} \label{l:48}
For each $ \epsilon >0$, there exists a $ q_0\in \mathbb{N}   $ 
such that, for all $ q \ge q_0 $,  
\begin{align}\label{:48b}&
\supN \| 
 \DeltaN _{l}e^{- \widetilde{\mathcal{H}}_{rs}^{\n } } - 
\DeltaN _{l}e^{- \widetilde{\mathcal{H}}_{rs}^{\n } }1_{\Aq }
 \|_{\LABone } 
 \le \epsilon 
.\end{align}
\end{lem}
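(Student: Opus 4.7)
The plan is to rewrite the $\LABone$ norm as a probability, dominate it by a simple first-moment estimate using the one-point correlation function, and then exploit that $B_r^q$ shrinks to a boundary set of Lebesgue measure zero as $q\to\infty$.

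First I would observe, from \eqref{:44b} and \eqref{:47c}, that $\DeltaN_{l} e^{-\widetilde{\mathcal{H}}_{rs}^{\n}}\,\Lambda(d\mathsf{s})$ is exactly the push-forward measure $\muNl\circ\pi_{\Srs}^{-1}(d\mathsf{s})$. By construction in \eqref{:47b}, $\muNl$ is carried by $\{\pi_{\Srs}(\mathsf{s})\in \mathsf{S}^{n}_{rs}\cap \Hsl\}$. Hence, using the definition \eqref{:48a} of $\Aq$,
\begin{align*}
\bigl\| \DeltaN_{l} e^{-\widetilde{\mathcal{H}}_{rs}^{\n}} - \DeltaN_{l} e^{-\widetilde{\mathcal{H}}_{rs}^{\n}} 1_{\Aq}\bigr\|_{\LABone}
= \muNl\bigl(\{\mathsf{s}\,;\, \pi_{\Srs}(\mathsf{s})(B_r^{q})\ge 1\}\bigr).
\end{align*}
Since $\muNl\le \mukNm \le \muN$ by \eqref{:47b} and \eqref{:30a}, the right-hand side is bounded above by $\muN(\{\mathsf{s}(B_r^{q})\ge 1\})$, independently of $l$ and $n$.

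Next I would apply the Chebyshev/Markov inequality together with the definition of the one-point correlation function to get
\begin{align*}
\muN\bigl(\{\mathsf{s}(B_r^{q})\ge 1\}\bigr) \le \int_{\SSS} \mathsf{s}(B_r^{q})\, d\muN = \int_{B_r^{q}} \rNone(x)\, dx.
\end{align*}
The point is now to choose an $r'\in\mathbb{N}$ with $\overline{\Sr}\subset \Srhat_{r'}$ (for instance $r'=b_r+1$), so that $B_r^{q}\subset \Srhat_{r'}$ for every sufficiently large $q$. By the uniform bound \eqref{:21b} of \thetag{H.2}\thetag{1} applied with $n=1$, there is a constant $\cref{;70}(r')$ such that $\supN \sup_{x\in \Srhat_{r'}} \rNone(x) \le \cref{;70}(r')$. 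Therefore
\begin{align*}
\supN \muN\bigl(\{\mathsf{s}(B_r^{q})\ge 1\}\bigr) \le \cref{;70}(r')\cdot |B_r^{q}|.
\end{align*}

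Finally I would note that $B_r^{q}=\{0<|s-\Sr|<1/q\}$ shrinks, as $q\to\infty$, to the boundary $\partial \Sr$, which has Lebesgue measure zero in $ S $; hence $|B_r^{q}|\to 0$. Given $\epsilon>0$, choosing $q_0$ with $\cref{;70}(r')\cdot |B_r^{q_0}|\le \epsilon$ yields \eqref{:48b} for every $q\ge q_0$, with a bound uniform in $\n$ and $l$. No real obstacle is expected; the only care needed is to check that the support of $\muNl\circ\pi_{\Srs}^{-1}$ really lies in $\mathsf{S}^{n}_{rs}\cap \Hsl$ so that the $\LABone$ difference collapses to a single indicator, which follows directly from \eqref{:47b}.
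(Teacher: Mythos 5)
Your proof follows the same route as the paper's: rewrite the $\LABone$ difference as a $\muNl$-probability of having a particle in $B_r^q$, dominate it by the first-moment bound $\int_{B_r^q}\rNone\,dx$ via the one-point correlation function, and then send $q\to\infty$ using the uniform local bound on $\rNone$ from \thetag{H.2}\thetag{1}. Your version is actually more precise: the paper cites \eqref{:21a}--\eqref{:21d} wholesale for the last step, whereas what one really needs is \eqref{:21b} with $n=1$ (to make $\rNone$ uniformly bounded on a fixed compact neighborhood of $\overline{\Sr}$), exactly as you say.

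One point you flag at the end deserves an explicit check rather than the dismissal "follows directly from \eqref{:47b}". Reducing $1-1_{\Aq}(\pi_{\Srs}(\mathsf{s}))$ to $1_{\{\pi_{\Srs}(\mathsf{s})(B_r^q)\ge 1\}}$ on the support of $\muNl$ requires that $\pi_{\Srs}(\mathsf{s})$ itself lies in $\mathsf{S}^n_{rs}\cap\Hsl$. The $\mathsf{S}^n_{rs}$ part is immediate, but $\Hsl$ is a condition on $\pi_{\Ssinfty}$ of a configuration, and $\pi_{\Ssinfty}(\pi_{\Srs}(\mathsf{s}))$ is the empty configuration; \eqref{:47b} only tells you $\mathsf{s}\in\Hsl$, not that $\pi_{\Srs}(\mathsf{s})\in\Hsl$. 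For the empty configuration the defining supremum in \eqref{:41y} collapses, via \eqref{:21o}, to $\supN\sup_{s<t}\left|\mms-\mmt\right|$, which is finite by \eqref{:21h}; hence the empty configuration is in $\Hsl$ once $l$ exceeds this finite bound, and $l$ was fixed to be sufficiently large just before this lemma. This closes the small gap. (Also, $\bigcap_q B_r^q=\emptyset$ rather than $\partial\Sr$, but your conclusion $|B_r^q|\to0$ is unaffected.)
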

\begin{proof}
By the definitions of $ \Aq $ and $  B_r^{q} $, and from the property of $ 1 $-correlation function we deduce that  
\begin{align*}&
\| 
 \DeltaN _{l}e^{- \widetilde{\mathcal{H}}_{rs}^{\n } } - 
\DeltaN _{l}e^{- \widetilde{\mathcal{H}}_{rs}^{\n } }1_{\Aq }
 \|_{\LABone } 
 \le \mukNm ((\Aq )^c )
 \le \int_{ B_r^{q}} \rNone (x) dx 
.\end{align*}
We deduce from \eqref{:21a}--\eqref{:21d} that, 
for each $ \epsilon >0$, there exists a $ q_0\in \mathbb{N}   $ such that 
\begin{align*}&
\supN \int_{ B_r^{q}} \rNone (x) dx 
  \le \epsilon \quad \text{ for all }  q \ge q_0 
.\end{align*}
Combining these two equations, we obtain \eqref{:48b}. 
\end{proof}

We will prove that 
$ \{  \DeltaN _{l} e^{- \widetilde{\mathcal{H}}_{rs}^{\n } } 1_{\Aq }  \}_{\n \in \mathbb{N}   }$ 
are relatively compact in $ \LABone $ 
for each $ n \in \mathbb{N}$ and for all sufficiently large $ l , q\in \mathbb{N}   $. 
For this, we will prove both of the relative compactness of 
$ \{  e^{- \widetilde{\mathcal{H}}_{rs}^{\n } } 1_{\Aq }  \}_{\n \in \mathbb{N}   }$ in $ \LABone $, 
and that of 
$ \{  \DeltaN _{l} \}_{\n \in \mathbb{N}   }$ in  $ C_b(\Aq )$ with uniform norm 
$ \| \, \cdot \, \|_{ C_b(\Aq )} $, where 
$ \| \, f \, \|_{ C_b(\Aq )} = \sup\{ |f(\mathsf{y})|\, ;\, \mathsf{y} \in \Aq \}$. 

We begin by proving the first claim. 
\begin{lem} \label{l:49} 
$ \{  e^{- \widetilde{\mathcal{H}}_{rs}^{\n } } 1_{\Aq }  \}_{\n \in \mathbb{N}   }$ converge 
to $ e^{-\mathcal{H}_{rs}}1_{\Aq } $ in $ \LABone $, and 
\begin{align}\label{:49a}&
\inf_{\n \in\mathbb{N}} 
\| e^{- \widetilde{\mathcal{H}}_{rs}^{\n } }1_{\Aq } \|_{\LABone }  
 > 0 
\quad \text{ for all sufficiently large $ l , q\in \mathbb{N}   $}
. \end{align}
\end{lem}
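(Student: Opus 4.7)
The plan is to apply the dominated convergence theorem (DCT) to obtain the $L^{1}$-convergence, and then to deduce the lower bound \eqref{:49a} from the strict positivity of the limiting $L^{1}$-norm.

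I would first verify the pointwise convergence $\widetilde{\mathcal{H}}_{rs}^{\n}(\mathsf{x})\to \mathcal{H}_{rs}(\mathsf{x})$ for $\Lambda$-a.e.\ $\mathsf{x}\in \mathsf{S}^{n}_{rs}$. Writing $\pi_{\Srs}(\mathsf{x})=\sum_{i=1}^{n}\delta_{x_i}$, the single-body part $\sum_{i}\{\PhiN(x_i)-\mmi\cdot x_i\}$ converges to $\sum_{i}\Phi(x_i)$ by \eqref{:21c}, while the pair-interaction part $\sum_{i<j}\PsiN(x_i,x_j)$ converges to $\sum_{i<j}\Psi(x_i,x_j)$ on the full-$\Lambda$-measure set of configurations with pairwise distinct particles, thanks to the $C^{1}$-compact uniform convergence in \eqref{:21d}. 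Exponentiating gives pointwise a.e.\ convergence of $e^{-\widetilde{\mathcal{H}}_{rs}^{\n}}$ to $e^{-\mathcal{H}_{rs}}$ on $\mathsf{S}^{n}_{rs}$.

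For the uniform dominating function I would combine the lower bound $\inf_{\n}\inf_{x\in S}\{\PhiN(x)-\mmi\cdot x\}>-\infty$ of \eqref{:21c} with the lower bound of $\PsiN$ on every compact subset of $S\!\times\!S$ coming from \eqref{:21d} (applied with some $r'$ large enough that $\Srs\subset\{|x|<r'\}$). This produces an $\n$-independent constant $C=C(n,r,s)$ with $e^{-\widetilde{\mathcal{H}}_{rs}^{\n}(\mathsf{x})}\le C$ for every $\mathsf{x}\in\mathsf{S}^{n}_{rs}$ and every $\n$; since the push-forward of $\Lambda$ under $\pi_{\Srs}$ restricted to $\mathsf{S}^{n}_{rs}$ has finite total mass ($\Srs$ being bounded), the constant $C\cdot 1_{\Aq}$ is $\Lambda$-integrable, and DCT delivers the desired $L^{1}$-convergence of $e^{-\widetilde{\mathcal{H}}_{rs}^{\n}}1_{\Aq}$ to $e^{-\mathcal{H}_{rs}}1_{\Aq}$.

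Once the $L^{1}$-convergence is established, \eqref{:49a} reduces to showing $\|e^{-\mathcal{H}_{rs}}1_{\Aq}\|_{\LABone}>0$ for $l,q$ sufficiently large, since then $\inf_{\n}$ is positive after discarding finitely many initial terms. As $\Phi,\Psi$ are finite a.e., $e^{-\mathcal{H}_{rs}}$ is positive $\Lambda$-a.e., so it suffices to prove $\Lambda(\Aq)>0$. The easy part $\mathsf{S}^{n}_{rs}\cap\{\mathsf{s}(B_r^{q})=0\}$ carries positive $\Lambda$-mass as soon as $\Srs\setminus B_r^{q}$ has positive Lebesgue measure. The main obstacle is then to verify that intersecting further with $\Hsl$ retains positive $\Lambda$-mass for $l$ large. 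Here I would exploit the observation that for any configuration $\mathsf{y}$ whose particles all lie inside $\Ss$, the relevant increment in the definition \eqref{:41y} collapses to $|(x-w)\cdot(\mms-\mmt)|$ because $\mathsf{y}(S_{st})=0$, and this is controlled by $|\mms-\mmt|$, uniformly bounded by \eqref{:21h}; combined with the $C^{1}$-compact uniform convergence of \eqref{:21d}, one expects that a positive-$\Lambda$-measure family of such ``inner'' configurations lies in $\Hsl$ for $l$ large, giving $\Lambda(\Aq)>0$ and completing the proof.
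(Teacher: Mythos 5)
Your argument follows the paper's proof essentially step by step: you establish $\Lambda$-a.e.\ pointwise convergence of $e^{-\widetilde{\mathcal{H}}_{rs}^{\n}}1_{\Aq}$ from \eqref{:21c} and \eqref{:21d}, produce a uniform upper bound via the lower bounds in \eqref{:21c} and \eqref{:21d} (the paper's \eqref{:49c}), invoke dominated convergence to get the $L^{1}$ statement, and then pass to the limiting norm to obtain positivity for large $l,q$. This is exactly the paper's route through \eqref{:49b}--\eqref{:49e}.

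Two small remarks on the second half. First, the reduction ``$\inf_{\n}$ is positive after discarding finitely many initial terms'' is incomplete as written: one must also observe that \emph{each} individual norm $\|e^{-\widetilde{\mathcal{H}}_{rs}^{\n}}1_{\Aq}\|_{\LABone}$ is strictly positive, which follows from the a.e.\ finiteness of $\widetilde{\mathcal{H}}_{rs}^{\n}$ (same ingredients, fixed $\n$) together with $\Lambda(\Aq)>0$; the paper records this explicitly as the first half of \eqref{:49e}. Second, your argument for $\Lambda(\Aq)>0$ via ``inner'' configurations with no particles outside $\Ss$ needs the implicit convention that $\Lambda$ in $\LABone$ is (or acts as) the Poisson measure restricted to $\Srs$, since $\widetilde{\mathcal{H}}_{rs}^{\n}$ and the densities $\Delta^{\n}$ depend only on $\pi_{\Srs}$; with the full Poisson measure on an unbounded $S$, configurations with no particles outside $\Ss$ form a null set, so the argument as literally stated would collapse. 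Under the restricted interpretation, which is the one used implicitly throughout Section 4 (cf.\ \eqref{:44b} and \eqref{:4(a}), your observation that the $\Hsl$-constraint then reduces to a bound on $|\mms-\mmt|$ is the right reason for non-emptiness of $\Aq$ when $l$ is large, and the paper itself leaves exactly this point at the level of citing \eqref{:21c} and \eqref{:21d}. So there is no genuine divergence in approach; just make the $\Lambda$-restriction and the $\forall \n$ positivity explicit.
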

\begin{proof} 
From \eqref{:21c} and \eqref{:21d} together with \eqref{:44a}, we deduce that 
\begin{align} \label{:49b}&
\limi{\n } e^{- \widetilde{\mathcal{H}}_{rs}^{\n } (\mathsf{x})} 
1_{\Aq } (\mathsf{x}) 
 = 
 e^{-\mathcal{H}_{rs}(\mathsf{x})}1_{\Aq } (\mathsf{x}) 
 \quad \text{ for $ \Lambda $-a.e.\ $ \mathsf{x}$}
,\\ \label{:49c}&
 \sup_{\n \in \mathbb{N}   } \sup_{\mathsf{x}\in\SSS } \{ 
e^{- \widetilde{\mathcal{H}}_{rs}^{\n } (\mathsf{x})} 1_{\Aq } (\mathsf{x}) \} < \infty 
.\end{align}
From \eqref{:49b} and  \eqref{:49c}, 
combined with the Lebesgue convergence theorem, we deduce the first claim. 
In turn, we deduce that 
\begin{align}\label{:49d}&
\limi{\n }  \| e^{- \widetilde{\mathcal{H}}_{rs}^{\n } }1_{\Aq } \|_{\LABone }  
= \| e^{-\mathcal{H}_{rs}}1_{\Aq } \|_{\LABone } 
. \end{align}
From \eqref{:21c} and \eqref{:21d}, we have, for all sufficiently large $ l , q\in \mathbb{N}   $, 
\begin{align}\label{:49e}& 
\| e^{- \widetilde{\mathcal{H}}_{rs}^{\n } }1_{\Aq } \|_{\LABone }  > 0 
\quad (\forall \n \in\mathbb{N}), \quad 
\| e^{-\mathcal{H}_{rs}}1_{\Aq } \|_{\LABone } > 0 
.\end{align}
Combining \eqref{:49d}  and \eqref{:49e} yields \eqref{:49a}.  
\end{proof}

We next prove the second claim. 
\begin{lem} \label{l:4(}
$ \{  \DeltaN _{l} \}_{\n \in \mathbb{N}   }$ are relatively compact 
 in  $ C_b(\Aq )$ with uniform norm. 
\end{lem}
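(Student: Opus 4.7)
The plan is to apply an Arzel\`a--Ascoli argument on $\Aq$, viewed as a relatively compact subset of the space of $n$-particle configurations in the bounded region $\Srs\setminus B_r^q$ equipped with the metric $d_{\mathsf{S}^n_{rs}}$ of \eqref{:41z}. It therefore suffices to establish, uniformly in $\n$, a uniform upper bound for $\DeltaN _l$ on $\Aq$ together with a Lipschitz-type equicontinuity estimate with respect to $d_{\mathsf{S}^n_{rs}}$.

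The first step is to derive an explicit representation of $\DeltaN _l(\mathsf{s})$ for $\mathsf{s}\in\Aq$. Applying the canonical Gibbs property in \thetag{H.2}\thetag{3} to the box $\Ss$ and disintegrating with respect to $\pi _{\Ss^c}=\mathsf{z}$, I integrate out the particles in $\Sr$ to obtain, after the compensation bookkeeping of \lref{l:34}, an expression of the form
\[
\DeltaN _l(\mathsf{s})=1_{\mathsf{S}^n_{rs}\cap\Hsl}(\mathsf{s})\,g^{\n}(\mathsf{s})\int_{\SSS}e^{-\wPsiNssi(\mathsf{s},\mathsf{z})}\,\frac{N^{\n}(\mathsf{s},\mathsf{z})}{Z^{\n}(\mathsf{z})}\,\muN\circ\pi_{\Ss^c}^{-1}(d\mathsf{z}),
\]
where $N^{\n}(\mathsf{s},\mathsf{z})=\int_{\Srm}1_{\Hrk}(\pi_{\Srs}(\mathsf{s})+\mathsf{z})\,e^{-\widetilde{\mathcal{H}}^{\n}_r(\mathsf{x})-\wPsiNrs(\mathsf{x},\mathsf{s})-\wPsiNrsi(\mathsf{x},\mathsf{z})}\,h^{\n}(\mathsf{x})\Lambda(d\mathsf{x})$, $Z^{\n}(\mathsf{z})$ is the corresponding $\muN$-normalization on $\Ss$, and $g^{\n},h^{\n}$ are residual bounded factors that absorb the redistribution of $\mmi\cdot\xr$; they are uniformly bounded thanks to $\supN|\mms|<\infty$ from \eqref{:21h} and the boundedness of $\Sr,\Srs$.

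For the uniform upper bound on $\Aq$, the constraint $\mathsf{s}(B_r^q)=0$ forces $\wPsiNrs(\mathsf{x},\mathsf{s})$ to vary by at most $\cref{;44}(k)$ as $\mathsf{x}$ ranges over $\Srm$ (\lref{l:41}\thetag{1}); combined with the two-sided control of $\int_{\Srm}e^{-\widetilde{\mathcal{H}}^{\n}_r}\Lambda$ from \lref{l:33}, the uniform boundedness of $g^{\n},h^{\n}$, and the fact that $\muN\circ\pi_{\Ss^c}^{-1}$ is a subprobability, this yields $\supN\sup_{\mathsf{s}\in\Aq}\DeltaN _l(\mathsf{s})<\infty$. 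For equicontinuity I combine \eqref{:41d}, which gives $|\wPsiNrs(\mathsf{x},\mathsf{s})-\wPsiNrs(\mathsf{x},\mathsf{s}')|\le\cref{;56b}\,d_{\mathsf{S}^n_{rs}}(\mathsf{s},\mathsf{s}')$ uniformly for $\mathsf{x}\in\Srm$, with \eqref{:41b} (passed to the limit $t\to\infty$ via the finite particle count \thetag{H.2}\thetag{2}), which yields $|\wPsiNssi(\mathsf{s},\mathsf{z})-\wPsiNssi(\mathsf{s}',\mathsf{z})|\le l\,d_{\mathsf{S}^n_{rs}}(\mathsf{s},\mathsf{s}')$ for $\mathsf{z}\in\Hsl$. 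Substituting into the representation, the ratio $\DeltaN _l(\mathsf{s})/\DeltaN _l(\mathsf{s}')$ on $\Aq$ is majorized by $\exp\{C\,d_{\mathsf{S}^n_{rs}}(\mathsf{s},\mathsf{s}')\}$ with $C$ independent of $\n$; together with the upper bound this supplies uniform Lipschitz continuity of $\{\DeltaN _l\}_{\n}$ on $\Aq$, and Arzel\`a--Ascoli then delivers the relative compactness in $C_b(\Aq)$.

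The principal obstacle is the algebraic bookkeeping that produces the representation above: one must track how the compensation $\mmi\cdot\xr$ redistributes among $\widetilde{\mathcal{H}}^{\n}_r$, $\wPsiNrs$, $\wPsiNrsi$, $\wPsiNssi$ and the residual factors $g^{\n},h^{\n}$, so that the exponent inside the integral is expressed entirely in terms of the compensated quantities to which \lref{l:41} applies uniformly in $\n$. Once the representation is in place the two uniform estimates follow mechanically; the key conceptual point is that the tightness cut-offs $\Hrk$ and $\Hsl$ from \thetag{H.3} convert \emph{a priori} $\n$-dependent moduli of continuity into uniform ones, which is exactly what makes the Arzel\`a--Ascoli argument close.
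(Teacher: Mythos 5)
Your overall strategy---Arzel\`a--Ascoli on $\Aq$, with uniform boundedness and an equicontinuity estimate driven by the Lipschitz estimates of \lref{l:41}\thetag{2},\thetag{3}---is the same as the paper's, and your equicontinuity step (bounding the ratio $\DeltaN_l(\mathsf{s})/\DeltaN_l(\mathsf{s}')$ by $e^{(\cref{;56b}+l)\,d_{\mathsf{S}^n_{rs}}(\mathsf{s},\mathsf{s}')}$, where the normalization constants cancel) is exactly the estimate the paper uses. However, your route to the \emph{uniform sup bound} has a genuine gap.

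You try to read off $\supN\sup_{\mathsf{s}\in\Aq}\DeltaN_l(\mathsf{s})<\infty$ directly from an explicit DLR-type representation of $\DeltaN_l$, invoking \lref{l:33} and ``uniform boundedness of $g^{\n},h^{\n}$.'' This does not close. \lref{l:33} controls $\int_{\Srm}e^{-\widetilde{\mathcal{H}}^{\n}_r}\Lambda$, i.e.\ the partition function over the box $\Sr$; but the normalization $Z^{\n}(\mathsf{z})$ in your representation is the partition function over the larger box $\Ss$ given the outside configuration $\mathsf{z}$, and nothing you cite gives a uniform lower bound on $Z^{\n}(\mathsf{z})$ (nor a uniform upper bound on $N^{\n}(\mathsf{s},\mathsf{z})/Z^{\n}(\mathsf{z})$). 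The factors $g^{\n},h^{\n}$ are left unspecified and asserted bounded; that is where the unquantified normalization is hiding. The paper avoids this entirely: it never bounds $\DeltaN_l$ by staring at a formula. Instead it (i) records the trivial mass identity $\|\DeltaN_l e^{-\widetilde{\mathcal{H}}^{\n}_{rs}}1_{\Aq}\|_{\LABone}=\muNl\circ\pi_{\Srs}^{-1}(\Aq)\le 1$ (that is \eqref{:4(a}), (ii) proves the ratio bound $\cref{;45l}$, and (iii) invokes the lower bound $\inf_{\n}\|e^{-\widetilde{\mathcal{H}}^{\n}_{rs}}1_{\Aq}\|_{\LABone}>0$ from \lref{l:49}, which together give $\sup_{\n}\|\DeltaN_l\|_{C_b(\Aq)}\le\cref{;45l}/\inf_{\n}\|e^{-\widetilde{\mathcal{H}}^{\n}_{rs}}1_{\Aq}\|_{\LABone}$. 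You never use \lref{l:49}'s lower bound, and without it your uniform bound is not established. Once you replace the hand-waved direct estimate by this three-line argument, the rest of your proof (converting the ratio/log-Lipschitz bound into a Lipschitz bound using $|x-y|\le\max\{x,y\}\,|\log x-\log y|$, then Arzel\`a--Ascoli) matches the paper's.
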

\begin{proof}
From the definition of $ \DeltaN _{l} $ 
(see \eqref{:30a}, \eqref{:47b}, and \eqref{:47c}), we see that 
\begin{align}\label{:4(a}&
\|  \DeltaN _{l} e^{- \widetilde{\mathcal{H}}_{rs}^{\n } } 1_{\Aq } \|_{ \LABone } 
= 
 \muNl  \circ \pi _{ \Srs }^{-1} (\Aq )
\le 1 
.\end{align}

Note that $ \pi _{\Srs ^c} = \pi _{\Sr } + \pi _{\Ssinfty }$. 
Hence we write $  \pi _{\Srs ^c} (\mathsf{s}) =  \mathsf{x} + \mathsf{z}$, 
where $ \mathsf{x}\in \pi _{\Sr }(\SSS )$ and $ \mathsf{z}\in \pi _{\Ssinfty }(\SSS )$. 
With this notation, $ \DeltaN _{l}(\mathsf{y}) $ can be written as 
\begin{align*}&
\DeltaN _{l}(\mathsf{y}) = \cref{;45f} 
\int _{\SSS } 
1_{\Hrk \cap \Hsl }( \mathsf{x} + \pi _{\Srs }(\mathsf{y})+\mathsf{z})
e^{
-\wPsiNrs (\mathsf{x},\mathsf{y})
- \wPsiNssi (\mathsf{y},\mathsf{z}) } 
\muNl  
\circ \pi _{\Srs ^c}^{-1}
(d\mathsf{x}d\mathsf{z})
.\end{align*}
with positive constant $ \Ct \label{;45f}$. 
Let 
$  \Ct \label{;45l} = \sup \{ 
e^{(\cref{;56b}   +l) d_{\mathsf{S}^{n}_{rs}}(\mathsf{y},\mathsf{y}')} ; 
\mathsf{y},\mathsf{y}'\in \Aq  \}$. 
Then applying  \eqref{:41d} and \eqref{:41b} to 
$ \wPsiNrs (\mathsf{x},\mathsf{y}) $ and $  \wPsiNssi (\mathsf{y},\mathsf{z}) $
 respectively, we deduce from \lref{l:41} that 
\begin{align}\label{:4(b}& 
\supN \sup_{ \mathsf{y},\mathsf{y}'\in \Aq } 
 \{ 
\frac{\DeltaN _{l}(\mathsf{y})}{\DeltaN _{l}(\mathsf{y}')} 
\} 
\le  \sup_{ \mathsf{y},\mathsf{y}'\in \Aq } 
e^{(\cref{;56b}   +l) d_{\mathsf{S}^{n}_{rs}}(\mathsf{y},\mathsf{y}')} 
= \cref{;45l} 
< \infty 
.\end{align}
Hence from \eqref{:4(a} and \eqref{:4(b}, we see that 
\begin{align}\label{:4(c} & &&
\| \DeltaN _{l}  \|_{ C_b(\Aq )}  \cdot 
 \| e^{- \widetilde{\mathcal{H}}_{rs}^{\n } }1_{\Aq } \|_{\LABone } 
&&
\\ \notag  &&
 = \ & 
 \| (
\frac{ \| \DeltaN _{l}  \|_{ C_b(\Aq )}  } 
{\DeltaN _{l}  } )
\DeltaN _{l} e^{- \widetilde{\mathcal{H}}_{rs}^{\n } }1_{\Aq } \|_{\LABone }
&\\ \notag  &&
 \le \ & \cref{;45l} 
 \|  \DeltaN _{l} e^{- \widetilde{\mathcal{H}}_{rs}^{\n } }1_{\Aq } \|_{\LABone }
&&
\text{ by \eqref{:4(b}}
\\ \notag  &&\le \ & 
 \cref{;45l} 
 &&\text{ by \eqref{:4(a}}
.\end{align}
Combining \eqref{:49a} and \eqref{:4(c} yields 
\begin{align}\label{:4(d}&
\sup_{\n \in\mathbb{N}} \| \DeltaN _{l}  \|_{ C_b(\Aq )}  
\le 
\frac{\cref{;45l}}
{ \inf_{\n \in\mathbb{N}} 
 \| e^{- \widetilde{\mathcal{H}}_{rs}^{\n } }1_{\Aq } \|_{\LABone } }
<  \infty 
.\end{align}

Taking the logarithm of \eqref{:4(b} and interchanging 
the role of $ \mathsf{y}$ and $ \mathsf{y}'$, we see that,
for all $  \mathsf{y},\mathsf{y}'\in \Aq   $, 
\begin{align}\label{:4(e}&
\supN 
 \{ |
\log {\DeltaN _{l}(\mathsf{y})} - \log {\DeltaN _{l}(\mathsf{y}')} | \}
 \le 
{(\cref{;56b}   +l) 
d_{\mathsf{S}^{n}_{rs}}(\mathsf{y},\mathsf{y}')}
.\end{align}
Then we deduce from the inequality 
$$ | x - y | \le  \max \{ x,y \} |\log x - \log y | 
\quad \text{ for } x,y > 0 
$$ 
and \eqref{:4(e} that, 
for all $  \mathsf{y},\mathsf{y}'\in \Aq $, 
\begin{align}\label{:4(f}  
& \supN 
 \{ |
 { \DeltaN _{l}(\mathsf{y})} - {\DeltaN _{l}(\mathsf{y}')} | \}
\\ \notag 
\le &  
\supN 
 \{  \| \DeltaN _{l}  \|_{ C_b(\Aq )} 
| \log {\DeltaN _{l}(\mathsf{y})} - \log {\DeltaN _{l}(\mathsf{y}')} | \}
\\ \notag 
 \le & 
\cref{;4(f} (\cref{;56b}   +l) d_{\mathsf{S}^{n}_{rs}}(\mathsf{y},\mathsf{y}') 
,\end{align}
where we set 
$ \Ct \label{;4(f} = \sup_{\n \in\mathbb{N}} \| \DeltaN _{l}  \|_{ C_b(\Aq )} $. 
Since $ \cref{;4(f} < \infty $ by \eqref{:4(d}, we deduce from 
\eqref{:4(f} that $ \{\DeltaN _{l} \}_{\n \in \mathbb{N}   } $ are equi-continuous 
in $ C_b(\Aq ) $ for each $ q\in \mathbb{N}   $,

From \eqref{:4(d} and \eqref{:4(f}, we deduce that 
$ \{\DeltaN _{l} \}_{\n \in \mathbb{N}   } $ are equi-continuous and 
 uniformly bounded in $ C_b(\Aq ) $ for each $ q\in \mathbb{N}   $. 
Hence, applying the Ascoli-Arzel\'{a} theorem to $ \{ \DeltaN _{l} \} $ 
completes the proof of \lref{l:4(}. 
\end{proof}

\bigskip

We are now in a position to complete the proof of \pref{l:45}. 

\smallskip 

\noindent 
{\em Proof of \pref{l:45}. } 
From \lref{l:49} and \lref{l:4(}, we deduce that  
$ \{  e^{- \widetilde{\mathcal{H}}_{rs}^{\n } } 1_{\Aq }  \}_{\n \in \mathbb{N}   }$ are 
convergent sequences in $ \LABone $ and that 
$\{  \DeltaN _{l} \}_{\n \in \mathbb{N}    } $ are relatively compact  in  $ C_b(\Aq )$ 
 for each $ n \in\mathbb{N}$ and for all sufficiently large $ l , q\in \mathbb{N}   $. 
Then we conclude that 
$$\{ \DeltaN _{l}e^{- \widetilde{\mathcal{H}}_{rs}^{\n } } 1_{\Aq } \}_{\n \in \mathbb{N}   }$$
are relatively compact in $ \LABone $ for such $ n , l ,q \in\mathbb{N}$. 
Combining this with \lref{l:46}, \lref{l:47}, and \lref{l:48}, we see that 
$\{\Delta ^{\n }e^{- \widetilde{\mathcal{H}}_{rs}^{\n } }\}_{\n \in \mathbb{N}   }$  
are relatively compact in $ \LABone $. 
Hence by \lref{l:44}, we complete the proof of \pref{l:45}. 
\qed 

\section{Proof of the second step. } \label{s:5} 
We devote this section to the proof of the second step. 

Let $ \murk = \mu ( \cdot \cap \mathsf{S}_{r,k}^{m} ) $ as in \dref{dfn:1}. 
Let $ \muABs $ be the regular conditional probability defined by 
$$ \muABs  = \murk (\pi _{ \Sr }(\mathsf{s})
\in d \mathsf{x} |\, \pi _{ \Srs }(\mathsf{s}))
.$$
We begin by proving uniform upper and lower bounds of 
Radon-Nikodym densities of 
$ \muABs $ w.r.t.\ $ e ^{- \mathcal{H}_{r}(\mathsf{x})}\mA $. 
\begin{lem} \label{l:51}
\thetag{1} 
For $ \murk $-a.e.\ $ \mathsf{s} $, 
the regular conditional probability $ \muABs $ 
is absolutely continuous w.r.t.\ 
$ e ^{- \mathcal{H}_{r}(\mathsf{x})}\mA $. 
\\
\thetag{2} 
Let $ \skABs $ be the Radon-Nikodym densities of 
$  \muABs $ w.r.t.\ $ e ^{- \mathcal{H}_{r}(\mathsf{x})}\mA $. 
Then, for each $ r,s, m,k \in  \mathbb{N}   $ such that $ r<s $ and 
$ \murk $-a.e.\ $ \mathsf{s} $, 
\begin{align} \label{:51a}&
\cref{;44c}^{-1}\le \skABs (\mathsf{x})\le \cref{;44c}
\quad \text{ for $ \muABs $-a.e.\ $ \mathsf{x}$} 
.\end{align}
Here $ \cref{;44c}$ is the positive constant given in \lref{l:42}. 
\end{lem}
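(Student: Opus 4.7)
The plan is to obtain both claims simultaneously by passing to the limit $\n\to\infty$ in the uniform density estimate of \lref{l:42}. The inequality \eqref{:42a} can be reformulated as: for $\mukNm$-a.e.\ $\mathsf{s}$ and every bounded measurable $f\ge 0$ supported in $\Srm$,
\begin{align*}
\cref{;44c}^{-1}\int f(\mathsf{x})\,e^{-\widetilde{\mathcal{H}}_{r}^{\n}(\mathsf{x})}\mA
\le \int f\,d\muABN
\le \cref{;44c}\int f(\mathsf{x})\,e^{-\widetilde{\mathcal{H}}_{r}^{\n}(\mathsf{x})}\mA .
\end{align*}
Multiplying by a bounded continuous $g\ge 0$ and integrating against $\mukNm\circ\pi_{\Srs}^{-1}$, the disintegration \eqref{:32b} identifies the middle term with $\int f(\pi_{\Sr}(\mathsf{s}))\,g(\pi_{\Srs}(\mathsf{s}))\,d\mukNm$, while each outer term factors as a constant times $(\mukNm\circ\pi_{\Srs}^{-1})(g)\cdot\int f\,e^{-\widetilde{\mathcal{H}}_{r}^{\n}}\,d\Lambda$, independent of $\mathsf{s}$.

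Next I would let $\n\to\infty$ term by term. The weak convergence $\mukNm\to\murk$ from \lref{l:31} handles the inner double integral; \lref{l:43} gives $(\mukNm\circ\pi_{\Srs}^{-1})(g)\to(\murk\circ\pi_{\Srs}^{-1})(g)$; and \lref{l:32} together with the uniform bound of \lref{l:33} permits bounded convergence to yield $\int f\,e^{-\widetilde{\mathcal{H}}_{r}^{\n}}\,d\Lambda\to\int f\,e^{-\mathcal{H}_{r}}\,d\Lambda$, the restriction $\mathrm{supp}\,f\subset\Srm$ being essential here. The limit reads
\begin{align*}
\cref{;44c}^{-1}\,(\murk\circ\pi_{\Srs}^{-1})(g)\int f\,e^{-\mathcal{H}_{r}}d\Lambda
\le \int fg\,d\murk
\le \cref{;44c}\,(\murk\circ\pi_{\Srs}^{-1})(g)\int f\,e^{-\mathcal{H}_{r}}d\Lambda .
\end{align*}

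To close the argument I would fix a countable family $\{f_i\}$ of nonnegative continuous test functions supported in $\Srm$ that is measure-determining, and similarly a countable determining family $\{g_j\}$. Disintegrating $\int fg\,d\murk$ as $\int g(\mathsf{s})\,\muABs(f)\,(\murk\circ\pi_{\Srs}^{-1})(d\mathsf{s})$ and varying $g$ over $\{g_j\}$ yields, for every pair $(i,j)$ outside a common $\murk\circ\pi_{\Srs}^{-1}$-null set of $\mathsf{s}$, the two-sided estimate with $f=f_i$. Taking the union of these (countably many) null sets and extending by a monotone class argument gives, for $\murk$-a.e.\ $\mathsf{s}$,
\begin{align*}
\cref{;44c}^{-1}\int f\,e^{-\mathcal{H}_{r}}d\Lambda
\le \muABs(f)
\le \cref{;44c}\int f\,e^{-\mathcal{H}_{r}}d\Lambda
\end{align*}
simultaneously for every bounded measurable $f\ge 0$ supported in $\Srm$. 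This is exactly the absolute continuity \thetag{1} together with the density bound \eqref{:51a} of \thetag{2}.

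I expect the delicate step to be the passage to the limit in the factor $e^{-\widetilde{\mathcal{H}}_{r}^{\n}}$: since $\PhiN,\PsiN$ may attain $+\infty$, the measure $e^{-\widetilde{\mathcal{H}}_{r}^{\n}}\,d\Lambda$ does not converge weakly and cannot be treated as an ambient reference measure. The remedy is to keep this Lebesgue integral separated from the $\mukNm$-integrals throughout, invoke bounded convergence on $\Srm$ via \lref{l:32} and \lref{l:33}, and only at the end transfer the resulting two-sided bound to the conditional measure $\muABs$.
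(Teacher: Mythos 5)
Your proposal is correct, and it takes a genuinely different route from the paper. The paper's own argument first extracts, via the uniform bound of \lref{l:42} and a diagonal argument, a subsequential $*$-weak limit $\skABs$ of $\{\st\ABN\}$ in $L^\infty(\SSS,\Lambda)$; it then invokes the $L^1$ convergence from \pref{l:45} to pass to the limit in $\int \mathsf{F}^{\n}\,\st\ABN\,d\Lambda$ and thereby identify $\skABs$ as the Radon--Nikodym density. The density bound \eqref{:51a} is then inherited by the $*$-weak limit. Your proposal instead integrates the two-sided bound \eqref{:42a} directly against test functions $f$ of the $\Sr$-configuration and $g$ of the $\Srs$-configuration, passes to the limit using only the weak convergences of \lref{l:31} and \lref{l:43} together with the bounded convergence of the Hamiltonian factor (\lref{l:32}, \lref{l:33}), and then derives absolute continuity and the density bound simultaneously from the resulting two-sided comparison of measures via countable determining families and a monotone class argument. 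What your route buys is economy: it bypasses \pref{l:45}, which is the most technical single result of Section~4, and it avoids the need to construct the density explicitly as a $*$-weak limit. What the paper's route buys is an explicit construction of $\skABs$ as a limit of the $\st\ABN$, which is arguably cleaner to carry forward into the martingale argument of \lref{l:52}, though your abstract density would serve there just as well. One small point worth being explicit about is that the convergence $\int f\,e^{-\widetilde{\mathcal{H}}_{r}^{\n}}\,d\Lambda\to\int f\,e^{-\mathcal{H}_{r}}\,d\Lambda$ requires the $\Lambda$-a.e.\ (not merely $\mu$-a.e.) pointwise convergence of $e^{-\widetilde{\mathcal{H}}_{r}^{\n}}$ on $\Srm$, which indeed holds because \eqref{:21c} and \eqref{:21d} are Lebesgue-a.e.\ statements on $S$ and extend to the Poisson reference measure $\Lambda$.
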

\begin{proof} 
We first prove the claim \thetag{1}. 
Similar to the case of \lref{l:43}, we see that 
$ \mukNm \circ (\pi _{\Sr },\pi _{\Srs })^{-1} $ 
converge weakly to 
$ \murk \circ (\pi _{\Sr },\pi _{\Srs })^{-1} $ 
as $ \n \to\infty $. Hence, for 
$ \mathsf{f}$, $ \mathsf{g} \in C_b(\SSS ) $, 
we have 
\begin{align} \label{:51f} 
\int _{\SSS } 
\mathsf{f}(\pi _{\Sr }(\mathsf{s}))
\mathsf{g}(\pi _{\Srs }(\mathsf{s})) d\murk 
& = \limi{\n }
\int _{\SSS } 
\mathsf{f}(\pi _{\Sr }(\mathsf{s}))
\mathsf{g}(\pi _{\Srs }(\mathsf{s})) d\mukNm 
.\end{align}

By \lref{l:42} and the diagonal argument, 
there exist subsequences of $\{ \st \ABN \}_{\n \in \mathbb{N}} $, 
denoted by the same symbol, with a limit $\skABs $ such that, 
for all $ k,m, r<s\in  \mathbb{N}   $,  
\begin{align} \label{:51g}&
\limi{\n } 
 \st \ABN  (\pi _{ \Sr }(\mathsf{s})) = 
\skABs (\pi _{ \Sr }(\mathsf{s})) 
\quad \text{$ * $-weakly in $\LAB $}
.\end{align}
Here $ \skABs $ is a function such that $ \skABs (\mathsf{x}) = \skABss (\pi _{ \Sr }(\mathsf{x}))$. 
Let 
\begin{align}&\label{:51h}
\mathsf{F}^{\n }(\mathsf{s}) = 
\mathsf{f}(\pi _{\Sr }(\mathsf{s}))
\mathsf{g}(\pi _{\Srs }(\mathsf{s})) 
\Delta ^{\n } (\mathsf{s}) 
e^{-\widetilde{\mathcal{H}}_{r}^{\n } (\mathsf{s})} 
,\\&\label{:51i}
\mathsf{F}(\mathsf{s}) = 
\mathsf{f}(\pi _{\Sr }(\mathsf{s}))
\mathsf{g}(\pi _{\Srs }(\mathsf{s})) 
\Delta (\mathsf{s})
e^{-\mathcal{H}_{r}(\mathsf{s})} 
.\end{align}
Then by \pref{l:45}, we see that $ \mathsf{F}^{\n } $ converge to 
$ \mathsf{F} $ in $ \LABone $. This combined with \eqref{:51g} implies 
\begin{align} \label{:51j} & 
\limi{\n }\int _{\SSS } \mathsf{F}^{\n }(\mathsf{s}) 
 \st \ABN (\mathsf{s})d\Lambda 
 = 
\int _{\SSS }\mathsf{F}(\mathsf{s})
\skABs (\mathsf{s}) d\Lambda 
.\end{align}

By \eqref{:51f}, \eqref{:51j} and 
$ \Delta (\mathsf{y}) e^{-\mathcal{H}_{r}(\mathsf{y})} \Lambda (d\mathsf{y}) = 
\murk \circ \pi _{ \Srs }^{-1}(d\mathsf{y}) $, 
 we obtain 
\begin{align*}&
\int _{\SSS } 
\mathsf{f}(\mathsf{x})
\mathsf{g}(\mathsf{y})
d\murk 
 = \int _{\SSS } 
\mathsf{f}(\mathsf{x} )
\mathsf{g}(\mathsf{y})
\skABs (\mathsf{x}) e^{-\mathcal{H}_{r}(\mathsf{x})} 
\mA \murk \circ \pi _{ \Srs }^{-1}(d\mathsf{y}) 
,\end{align*}
where $ \mathsf{x} = \pi _{\Sr }(\mathsf{s})$ and 
$ \mathsf{y} = \pi _{\Srs }(\mathsf{s})$. 
Hence, we obtain \thetag{1} with density $ \skABs $. 

By \eqref{:42a} and \eqref{:51g}, 
we see that $\skABs $ satisfies \eqref{:51a}, which implies \thetag{2}. 
\end{proof}

\begin{lem} \label{l:52} 
Let $ \muA (d\mathsf{x}) $ be as in \eqref{:qg3}. 
Let $ \skABs $ be as in \lref{l:51}. Then the following limit exists. 
\begin{align}\label{:52a}& 
\sAym (\mathsf{x}) : = \limi{s} \skABs (\mathsf{x}) 
\quad \text{for $ \muA $-a.s.\ $\mathsf{x}$, 
 for $ \murk $-a.s.\ $ \mathsf{s} $}
.\end{align}
Moreover, $ \sAym $ satisfies for $ \murk $-a.e.\ $ \mathsf{s} $ 
\begin{align}\label{:52b}&
\cref{;44c}^{-1}\le \sAym (\mathsf{x})\le \cref{;44c}
\quad \text{ for $ \muA $-a.e.\ }\mathsf{x} 
\\ \label{:52c}&
\sAym (\mathsf{x}) e^{-\mathcal{H}_{r}(\mathsf{x})}
\mA = \muA (d\mathsf{x})
. \end{align}
\end{lem}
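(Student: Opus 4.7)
The approach is to set up a scalar bounded martingale indexed by $s$ and apply Doob's convergence theorem, with the uniform density bounds of \lref{l:51} as the decisive input.

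First, introduce the filtration $\mathcal{F}_s := \sigma[\pi_{\Srs}]$ on $\SSS$ for $s>r$. Since $\Srs \uparrow \Srinfty$ as $s \to \infty$, this filtration is increasing with $\bigvee_{s} \mathcal{F}_s = \sigma[\pi_{\Srinfty}] =: \mathcal{F}_\infty$, and by construction $\muABs = \murk(\pi_{\Sr} \in \cdot \mid \mathcal{F}_s)$ while $\muA = \murk(\pi_{\Sr} \in \cdot \mid \mathcal{F}_\infty)$. For $r<s<t$, the tower property gives the measure-valued martingale identity $\mathbb{E}_{\murk}[\mu^m_{r,k,\mathsf{s},rt}(A) \mid \mathcal{F}_s] = \muABs(A)$ for every Borel $A \subset \Srm$.

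Next, I would promote this to a pointwise scalar martingale at the density level. Set $\nu_r(d\mathsf{x}) := 1_{\Srm}(\mathsf{x}) e^{-\mathcal{H}_r(\mathsf{x})}\mA$. Testing the measure-valued identity against an arbitrary nonnegative $g \in L^1(\nu_r)$ and interchanging integration with the conditional expectation via Fubini yields $\mathbb{E}_{\murk}[\sigma^m_{r,k,\mathsf{s},rt}(\mathsf{x}) \mid \mathcal{F}_s] = \skABs(\mathsf{x})$ for $\nu_r$-a.e.\ $\mathsf{x}$. Taking a union of exceptional sets over rational $s<t$, and noting that $\mathsf{s}\mapsto\skABs(\mathsf{x})$ is $\mathcal{F}_s$-measurable since $\muABs$ depends on $\mathsf{s}$ only through $\pi_{\Srs}$, one obtains that for $\nu_r$-a.e.\ $\mathsf{x}$ the process $s \mapsto \skABs(\mathsf{x})$ is a bounded $(\mathcal{F}_s,\murk)$-martingale taking values in $[\cref{;44c}^{-1},\cref{;44c}]$ by \lref{l:51}.

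Doob's bounded martingale convergence theorem then supplies a $\murk$-a.s.\ limit $\sAym(\mathsf{x}) := \limi{s} \skABs(\mathsf{x})$ for $\nu_r$-a.e.\ $\mathsf{x}$; a Fubini argument on $\nu_r \otimes \murk$ reverses quantifiers to give \eqref{:52a}, and the two-sided bound \eqref{:52b} is inherited directly from \lref{l:51}. For \eqref{:52c}, I would pass to the limit $s \to \infty$ in $\int_A \skABs\, d\nu_r = \muABs(A)$: L\'evy's upward theorem produces the right-hand side $\to \muA(A)$ $\murk$-a.s., while bounded dominated convergence gives $\int_A \skABs\, d\nu_r \to \int_A \sAym\, d\nu_r$; uniqueness of the Radon--Nikodym density then identifies $\sAym\, e^{-\mathcal{H}_r(\mathsf{x})}\mA$ with $\muA(d\mathsf{x})$.

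The main obstacle is the scalar reduction: descending from the abstract measure-valued martingale relation $\muABs = \mathbb{E}_{\murk}[\mu^m_{r,k,\mathsf{s},rt} \mid \mathcal{F}_s]$ to a genuine pointwise scalar martingale identity for $\skABs(\mathsf{x})$ at almost every $\mathsf{x}$. This descent relies crucially on the uniform $L^\infty$ bounds of \lref{l:51}, which both legitimize the interchange of integration and conditional expectation and make Doob's theorem deliver pointwise (rather than merely weak) convergence.
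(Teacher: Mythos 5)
Your proposal is correct, and it rests on the same core mechanism as the paper's proof: a bounded martingale indexed by $s$, Doob's convergence theorem, and the uniform two-sided bound \eqref{:51a}, which simultaneously makes the martingale bounded (hence a.s.\ convergent and closable in $L^1$) and makes the conditional measures equivalent to the reference $1_{\Srm}\,e^{-\mathcal{H}_{r}}\Lambda$. The difference is one of organization, and it is worth spelling out what the paper's choice buys. You keep $\mathsf{x}$ as a free parameter, derive from the tower property a per-$\mathsf{x}$ scalar martingale identity over the filtration $\sigma[\pi_{\Srs}]$ (valid for a.e.\ $\mathsf{x}$ after a countable union of exceptional sets), apply Doob pointwise, and then reverse quantifiers by Fubini to land on \eqref{:52a}. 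The paper instead performs the diagonal substitution $\mathsf{x}=\pi_{\Sr}(\mathsf{s})$ to obtain a single scalar process $M_s(\mathsf{s})=\skABs(\pi_{\Sr}(\mathsf{s}))$ on $(\SSS,\murk)$, adapted to the enlarged filtration $\mathcal{F}_s=\sigma[\pi_{\Sr},\pi_{\Srs}]$; the relation \eqref{:52d} then exhibits $M_s$ as a density process relative to the product reference $e^{-\mathcal{H}_r}\Lambda\otimes\murk\circ\pi_{\Sr^c}^{-1}$, and martingale convergence is invoked directly on $\SSS$. That diagonal substitution is exactly what spares the paper the per-$\mathsf{x}$ exceptional-set bookkeeping and the Fubini exchange of quantifiers that you rightly flag as the main obstacle in your route, at the modest cost of having to carry $\pi_{\Sr}$ along in the filtration. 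Your derivation of \eqref{:52c} via L\'evy's upward theorem together with bounded convergence is equivalent to the paper's uniform-integrability argument; both feed off the same $L^{\infty}$ control from \eqref{:51a}.
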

\begin{proof}
The proof of this lemma is exactly the same as Lemma 5.5 in \cite{o.rm}. 
However, we give the proof here for the reader's convenience.

Define $ \map{M_s}{\mathsf{S}}{ \mathbb{R} } $ by 
$ M_s (\mathsf{s}) = \skABs (\mathsf{x})$, where 
$ \mathsf{x} = \pi _{\Sr }(\mathsf{s})$.
Recall that 
$ \skABs $ is the Radon-Nikodym density of 
$ \muABs $ w.r.t.\ $ e ^{- \mathcal{H}_{r}(\mathsf{x})} \mA $ and that 
$ \muABs = \mu ^{m}_{r,k,\pi _{\Srs }(\mathsf{s}),rs}$ by construction. 
Hence, 
\begin{align} \label{:52d}
M_s (\mathsf{s})e^{-\mathcal{H}_{r}(\mathsf{x}) }\mA 
& = 
\mu ^{m}_{r,k,\pi _{\Srs }(\mathsf{s}),rs}
(d\mathsf{x})
.\end{align}

Let 
$ \mathcal{F}_s = 
\sigma [\pi _{\Sr }, \pi _{\Srs }] $, where 
$ r< s \le \infty $. 
Then by \eqref{:52d}, we see that 
$ \{ M_s \}_{s \in [r, \infty)}$ 
is an $ (\mathcal{F}_s) $-martingale, which implies 
$ M_{\infty }(\mathsf{s}): = 
\limi{s} M_s (\mathsf{s})$ 
exists for $ \murk $-a.e.\ $\mathsf{s}$. 
Since 
\begin{align*}&
M_s (\mathsf{s}) = 
\sigma ^m_{r,k,\pi_{\Srs}(\mathsf{s}),rs}
(\mathsf{x}), \quad \text{where }
 \mathsf{x} = \pi _{\Sr }(\mathsf{s}) 
,\end{align*}
we write 
$ M_{\infty }(\mathsf{s}) =  \sAym (\mathsf{x}) $.  
By construction, 
$  \sAym (\mathsf{x}) = 
\sigma _{r,k,\pi_{\Srinfty }(\mathsf{s})}^{ m } (\mathsf{x})  =
\sigma _{r,k,\pi_{\Sr ^c}(\mathsf{s})}^{ m } (\mathsf{x})  
$ and, for $ \murk $-a.s.\ $ \mathsf{s} $, we can regard 
$  \sAym (\mathsf{x})$ as a  $ \sigma [ \pi _{\Sr }]$-measurable function in $ \mathsf{x}$. 
Hence, through the disintegration \eqref{:qg4}, we obtain \eqref{:52a}. 

We immediately obtain \eqref{:52b} 
from \eqref{:51a} and \eqref{:52a}. 

We see that 
$ \{ M_s \}_{s \in [r, \infty)}$ is uniformly integrable by \eqref{:51a}. 
Hence, by \eqref{:52a}, we see that $ M_s (\mathsf{s})$ converges to 
$ M_{\infty }(\mathsf{s}) = \sAym (\mathsf{x})$ strongly in $ L^1(\Srm , \muA ) $, 
which combined with \eqref{:52d} and the definition 
$ M_s (\mathsf{s}) = \skABs (\mathsf{x}) $ yields \eqref{:52c}. 
\end{proof}

\noindent
\textit{Proof of \tref{l:21}. } 
By \lref{l:31}, we see that $ \{ \murk \}$ satisfies \eqref{:qg1}. 
Moreover, by \eqref{:52b} and \eqref{:52c}, we deduce 
that $ \murky $ satisfies \eqref{:qg2}, 
which completes the proof of \tref{l:21}. 
\qed

\section{A sufficient condition of \thetag{H.3}} \label{s:6}
In this section, we give a sufficient condition of \thetag{H.3} 
when $ d= 1,2$ and $ \PsiN $ satisfy \eqref{:22a}. 
So $ \PsiN (x,y) := \Psi (x,y) = - \beta \log |x -y |$ are logarithmic functions by assumption.  
If $ d=2$, we regard $  \mathbb{R} ^2$ as $ \mathbb{C}$ by the natural correspondence: 
$ \mathbb{R}^2\ni (x,y) \mapsto x + \sqrt{-1}y \in \mathbb{C}$. 
 To unify the both cases we regard $ \mathbb{R}$ as a subset of $ \mathbb{C}$ 
 in an obvious manner. 
We denote by $ \Re [\cdot ]$ and $ \Im [\cdot ]$ the real and imaginary part of 
$ \cdot $, respectively. 
We remark that $ z/|z|^2 = 1/\bar{z}\in\mathbb{C}$.

We consider the Taylor expansion of $ \Psi ( x , y ) $. %
\begin{lem} \label{l:61} 
Assume \eqref{:22a}.  Let $ x,y\in \mathbb{C} $ 
such that $ | x |<| y | $. Then 
\begin{align}\label{:61a}&
\Psi ( x , y ) -\Psi (0, y )  
 = 
\beta \sum_{\ell = 1}^{\infty} \frac{1}{\ell }
\Re [(\frac{ x }{ y })^{\ell}]
.\end{align}
Here $ \Re [{\cdot }] $ denotes 
the real part of $ \cdot \in \mathbb{C}$. 
\end{lem}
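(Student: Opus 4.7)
The plan is to reduce the identity to the standard power-series expansion of the complex logarithm. First I would rewrite the left-hand side using \eqref{:22a} as
\begin{align*}
\Psi(x,y) - \Psi(0,y) = -\beta\log|x-y| + \beta\log|y| = -\beta\log\Bigl|1 - \tfrac{x}{y}\Bigr|.
\end{align*}
This puts the claim entirely in terms of a single complex variable $z = x/y$, which satisfies $|z| < 1$ by hypothesis.

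Next I would invoke the classical Taylor expansion of the principal branch of the complex logarithm on the open unit disk:
\begin{align*}
-\log(1-z) = \sum_{\ell=1}^{\infty} \frac{z^{\ell}}{\ell} \qquad (|z|<1),
\end{align*}
which converges absolutely on any compact subset of $\{|z|<1\}$. The key observation linking this with the modulus is that for any $w \in \mathbb{C}\setminus(-\infty,0]$ one has $\Re[\log w] = \log|w|$; in particular $\Re[\log(1-z)] = \log|1-z|$, so taking real parts of the series gives
\begin{align*}
-\log|1-z| = \sum_{\ell=1}^{\infty} \frac{\Re[z^{\ell}]}{\ell}.
\end{align*}

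Finally I would substitute $z = x/y$ and multiply by $\beta$ to obtain \eqref{:61a}. There is essentially no obstacle here: the only subtle point is to justify interchanging the real part with the infinite sum, which follows from absolute convergence of the series since $|\Re[z^\ell]/\ell| \le |z|^\ell/\ell$ and $|x/y|<1$. The whole argument is a short analytic computation used to set up a Taylor-type expansion of the pairwise potential that will later be exploited to control the differences $\wPsiN_{r,rs}(x,\mathsf{y}) - \wPsiN_{r,rs}(w,\mathsf{y})$ appearing in the definition \eqref{:21f} of $\Hrk$.
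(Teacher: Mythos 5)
Your argument is correct and is essentially the same as the paper's: both reduce the difference to $-\beta\log|1-x/y|$ and then expand $\log(1-z)$ in its Taylor series. The only cosmetic difference is that the paper writes the squared modulus $1+r^2-2r\cos\theta$ and splits the logarithm into the pair $\log(1-re^{i\theta})+\log(1-re^{-i\theta})$ before expanding, whereas you invoke $\Re[\log(1-z)]=\log|1-z|$ directly and take real parts term by term; these are identical computations.
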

\begin{proof} 
Let $ r = | x |/| y | $ and $ \theta = \angle ( x , y ) $. 
Then we see that 
\begin{align} \notag 
\Psi ( x , y )  & -\Psi (0, y ) = -\frac{\beta }{2} 
\log | \frac{ x }{| y |}- \frac{ y }{| y |}|^2 
\\ \notag & 
= -\frac{\beta }{2} \log \left(1+r^2- 2r \cos \theta \right)  = 
-\frac{\beta }{2} \{\log (1-re^{ \mathrm{i}\theta }) + \log (1-re^{- \mathrm{i} \theta })\}
.\end{align}
Hence, \eqref{:61a} follows from the Taylor expansion. 
\end{proof}

Let $ \Srs = \Ss \backslash \Sr = \{ y\in  S \, ;\, \br \le |y|< b_s \} $ 
as before, where $ \Sr $ and $ \br $ are given by \eqref{:qg0}.  We set 
 $ \Psi _{rs} (x,\mathsf{y}) = \sum _{y_i\in \Srs }  \Psi (x,y_i) $, 
where $ \mathsf{y}=\sum_i\delta_{y_i}$.  
By \eqref{:61a}, we easily see that 
\begin{align} \label{:63h} 
\Psi _{rs} (x,\mathsf{y}) - \Psi _{rs} (w ,\mathsf{y}) & = 
\beta \sum_{\ell = 1}^{\infty } \frac{1}{\ell } \4 
\Re [\frac{ x ^{\ell }  -  w ^{\ell }  }{ y _i^{\ell }  }]
\\ \notag &  
= 
\beta \sum_{\ell = 1}^{\infty } \frac{1}{\ell } 
\Re [( x ^{\ell }  -  w ^{\ell }  )\cdot \4 \frac{1}{ y _i^{\ell }  }]
.\end{align}

Recall the notation $  x \cdot \mms $ in \eqref{:21c}.  
If $ d=2$, then 
$ \mms = (\mathfrak{m}^{\n }_{s,1}, \mathfrak{m}^{\n }_{s,2}) \in \mathbb{R}^2$ by definition, 
and so 
$  x \cdot \mms  = 
x_1 \mathfrak{m}^{\n }_{s,1} + x_2 \mathfrak{m}^{\n }_{s,2} $. 
Since we interpret $ x $ as complex numbers, we set 
$  x \cdot \mms  = 
\Re [ x ] \mathfrak{m}^{\n }_{s,1}+ \Im [ x ] \mathfrak{m}^{\n }_{s,2} 
= \Re [ x \bar{\mathsf{m}}_{s}^{\n } ]$.  
Since $ x = x_1 + \sqrt{-1}x_2 $,  we then have 
\begin{align*} 
x \cdot (\mmr - \mms )  & = 
 x _1 (\mathfrak{m}^{\n }_{r,1} - \mathfrak{m}^{\n }_{s,1}) 
+ 
 x _2 (\mathfrak{m}^{\n }_{r,2} - \mathfrak{m}^{\n }_{s,2}) 
\\ \notag & = 
\Re [x ( \bar{\mathsf{m}}_{r}^{\n } -  \bar{\mathsf{m}}_{s}^{\n } ) ]
.\end{align*}
Here, in the second line, we regard $ x $, $ \mmr $, and $ \mms $ 
as complex numbers.

\begin{lem} \label{l:62} 
Let 
\begin{align}  \label{:63i} 
\widetilde{\Psi }^{\n }_{rs} (x,\mathsf{y})
= &
\Psi _{rs} (x,\mathsf{y})  + 
\Re [x ( \bar{\mathsf{m}}_{r}^{\n } -  \bar{\mathsf{m}}_{s}^{\n } ) ]
.\end{align}
Then 
the following holds with finite constants $ \Ct \label{;6x} $ and $ \Ct \label{;6y} $. 
\begin{align} 
\label{:62d} & 
\sup_{x,w \in \Sr \atop  x\not= w }
\frac
{|\widetilde{\Psi }^{\n }_{rs} (x,\mathsf{y}) -  \widetilde{\Psi }^{\n }_{rs} (w ,\mathsf{y}) |}
{| x -  w |}
\\ \notag & 
\le  
 |\mmmm | 
 + 
\cref{;6x} 
\sum_{\ell = 2}^{\ellell -1} |\9 |
+ 
\cref{;6y} 
\sum_{\y _i \in \Srs } 
\frac{ \br ^{\ellell } }{| y _i |^{\ellell } - \br ^{\ellell }  }
.\end{align}
Here $ \mathsf{y}= \sum_j \delta_{y_j}$ as before and 
\begin{align} \label{:63k}
 \mmmm & =  \Re [ 
\beta \big( \4 \frac{1}{ y _i } \big)  +  ( \bar{\mathsf{m}}_{r}^{\n } -  \bar{\mathsf{m}}_{s}^{\n } ) ]  
, \\ \label{:63c}
\cref{;6x} & =  |\beta | \cdot \max_{1 \le \ell < \ellell } 
 \sup _{x\not=w\in\Sr } 
\frac{| x ^{\ell } -  w ^{\ell } |}{\ell | x -  w |} ,
\\\label{:62e}
\cref{;6y} & =  |\beta | \cdot \sup_{\ellell \le \ell } \sup _{x\not=w\in\Sr }
\frac{| x ^{\ell } -  w ^{\ell } |}{ \br ^{\ell }  \ell | x -  w |}
.\end{align}
\end{lem}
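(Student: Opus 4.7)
The plan is to combine the pointwise Taylor expansion \eqref{:63h} (obtained from \lref{l:61} by summing over $y_i \in \Srs $) with the real-linear compensation term appearing in \eqref{:63i}, and then split the resulting series into three parts indexed by $\ell =1$, $2 \le \ell \le \ellell -1$, and $\ell \ge \ellell $.

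For $x, w \in \Sr $ and $y_i \in \Srs $ we have $|x|, |w| < \br \le |y_i|$, so \lref{l:61} applies term by term; summing over $y_i$ produces \eqref{:63h}. Adding $\Re [(x - w)(\bar{\mathsf{m}}_{r}^{\n } - \bar{\mathsf{m}}_{s}^{\n })]$ to both sides to form $\widetilde{\Psi }^{\n }_{rs}(x, \mathsf{y}) - \widetilde{\Psi }^{\n }_{rs}(w, \mathsf{y})$ and pulling $(x^{\ell } - w^{\ell })$ out of the inner $y_i$-sum, the $\ell =1$ contribution merges with the compensation into the single term
\begin{align*}
\Re \Big[(x - w) \Big( \beta \sum_{y_i \in \Srs } \tfrac{1}{y_i} + \bar{\mathsf{m}}_{r}^{\n } - \bar{\mathsf{m}}_{s}^{\n } \Big) \Big].
\end{align*}
Dividing by $|x - w|$ and using $|\Re [zw]| \le |z|\, |w|$ bounds this piece by $|\mmmm |$ in view of \eqref{:63k}, yielding the first term on the right-hand side of \eqref{:62d}.

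For the middle range $2 \le \ell \le \ellell -1$, the constant $\cref{;6x}$ in \eqref{:63c} is tailored precisely so that $\frac{|\beta |}{\ell }\,\frac{|x^{\ell } - w^{\ell }|}{|x - w|} \le \cref{;6x}$ uniformly in $x \ne w \in \Sr $. Summing over $\ell$ in this range and applying the triangle inequality to the real part delivers the middle term $\cref{;6x} \sum_{\ell = 2}^{\ellell -1} \bigl| \sum_{y_i \in \Srs } y_i^{-\ell } \bigr|$ of \eqref{:62d}.

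The tail $\ell \ge \ellell $ is where the principal work lies. The definition of $\cref{;6y}$ in \eqref{:62e} supplies the bound $\frac{|\beta |}{\ell }\,\frac{|x^{\ell } - w^{\ell }|}{|x - w|} \le \cref{;6y}\,\br ^{\ell }$ for $\ell \ge \ellell $. After estimating $\bigl| \sum_{y_i} y_i^{-\ell } \bigr| \le \sum_{y_i} |y_i|^{-\ell }$, interchanging the $\ell$- and $y_i$-summations (justified by absolute convergence from $\br < |y_i|$), and evaluating the resulting geometric series as
\[
\sum_{\ell = \ellell }^{\infty } \Big( \tfrac{\br }{|y_i|} \Big)^{\ell } = \frac{(\br / |y_i|)^{\ellell }}{1 - \br / |y_i|} \le \ellell \cdot \frac{\br ^{\ellell }}{|y_i|^{\ellell } - \br ^{\ellell }},
\]
where the last inequality follows from $1 - t^{\ellell } = (1 - t)(1 + t + \cdots + t^{\ellell -1}) \le \ellell (1 - t)$ for $t \in [0, 1)$, one recovers the third term of \eqref{:62d} with the harmless combinatorial factor $\ellell $ absorbed into $\cref{;6y}$.

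The main obstacle is the tail step: the geometric sum has to be massaged into the exact form $\br ^{\ellell }/(|y_i|^{\ellell } - \br ^{\ellell })$ appearing in the statement, and the interchange of sums must be justified. By contrast, the $\ell =1$ separation and the middle-range bound are essentially bookkeeping out of \lref{l:61} and the definitions \eqref{:63c}, \eqref{:62e}.
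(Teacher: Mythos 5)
Your argument follows the same three-way decomposition the paper uses: merge the $\ell = 1$ term of the Taylor series \eqref{:63h} with the linear compensation $\Re[(x-w)(\bar{\mathsf{m}}_r^{\mathsf{n}} - \bar{\mathsf{m}}_s^{\mathsf{n}})]$ to produce $|F_{rs}^{\mathsf{n}}(\mathsf{y})|$, bound the middle range $2 \le \ell < \ellell$ by pulling out $\cref{;6x}$ from \eqref{:63c}, and treat the tail $\ell \ge \ellell$ via $\cref{;6y}$ and a geometric series in $\br/|y_i|$.

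The one place where you diverge from the paper's argument is the closing of the geometric sum, and here you are actually more careful than the source. The paper invokes the identity $\sum_{\ell \ge \ellell} a^{\ell}/b^{\ell} = a^{\ellell}/(b^{\ellell} - a^{\ellell})$, which is false: the left side equals $\dfrac{a^{\ellell}}{b^{\ellell - 1}(b-a)}$, which is $\dfrac{a^{\ellell}}{(b-a)\sum_{j=0}^{\ellell -1} b^{\ellell -1-j} a^j}$ times the factor $\sum_{j} b^{\ellell-1-j}a^j / b^{\ellell -1} \in [1,\ellell]$, so the true geometric sum dominates $a^{\ellell}/(b^{\ellell}-a^{\ellell})$ rather than equalling it. (The paper appears to have confused $\sum_{\ell \ge \ellell} t^{\ell} = t^{\ellell}/(1-t)$ with the sparser sum $\sum_{j \ge 1} t^{j\ellell} = t^{\ellell}/(1-t^{\ellell})$.) Your inequality $1 - t^{\ellell} \le \ellell(1-t)$ yields the correct comparison, at the cost of the extra factor $\ellell$. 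One caveat on your wording: since $\cref{;6y}$ is explicitly defined by \eqref{:62e}, the factor $\ellell$ cannot literally be "absorbed into" it; the honest conclusion is that \eqref{:62d} holds with $\cref{;6y}$ replaced by $\ellell \cref{;6y}$, which is still finite and therefore equally serviceable in the only place the lemma is used (the definition of $\cref{;6z} = \cref{;6y} \br^{\ellell}$ in the proof of Theorem~\ref{l:63}, where only finiteness matters).
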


\begin{proof}
We first check the finiteness of $ \cref{;6x}$ and $ \cref{;6y}$. 
Indeed, $ \cref{;6x} < \infty $ is clear. 
Note that, $ | x |/ \br < 1$ on $ \Sr $. Thus, the Lipschitz norm of the function 
$ { x ^{\ell } }/ {\br ^{\ell }  \ell }$ on $ \Sr $ is uniformly bounded in $ \ell $, 
which implies $ \cref{;6y} < \infty $. 

Since $ |\Re [ab]|\le |a||b|$, we deduce from \eqref{:63h} that for 
$ x\not= w \in \Sr $ 
\begin{align}\label{:62z}&
\frac
 {|\widetilde{\Psi }^{\n }_{rs} (x,\mathsf{y}) -  \widetilde{\Psi }^{\n }_{rs} (w ,\mathsf{y}) |} 
 {| x -  w |}
 \le 
 |\mmmm |  
 + 
 |\beta | \{ \sum_{\ell = 2}^{\infty }\frac{| x ^{\ell } -  w ^{\ell } |}{\ell | x -  w |} \} 
 \big| \4 \frac{1}{ y _i^{\ell } }\big| 
.\end{align}
We easily see that 
\begin{align}\label{:62zz} 
&  
 |\beta | \{ \sum_{\ell = 2}^{\infty }\frac{| x ^{\ell } -  w ^{\ell } |}{\ell | x -  w |} \} 
 \big| \4 \frac{1}{ y _i^{\ell } }\big| 
\\ \notag 
= & 
 |\beta | \{ \sum_{\ell = 2}^{\ellell -1} \frac{| x ^{\ell } -  w ^{\ell } |}{\ell | x -  w |} \} 
 \big| \4 \frac{1}{ y _i^{\ell } }\big| 
 + 
 |\beta | \{ \sum_{\ell = \ellell }^{\infty} \frac{| x ^{\ell } -  w ^{\ell } |}{\ell | x -  w |} \} 
 \big| \4 \frac{1}{ y _i^{\ell } }\big| 
,\\ \notag  
\le &  
 \cref{;6x} 
\sum_{\ell = 2}^{\ellell -1} 
|\9 |
+ 
\cref{;6y} 
\sum_{\ell = \ellell }^{\infty } 
\sum_{\y _i \in \Srs }\frac{ \br ^{\ell } }{| y _i |^{\ell }} 
\\ \notag 
= & \cref{;6x} 
\sum_{\ell = 2}^{\ellell -1} |\9 |
+ 
\cref{;6y} 
\sum_{\y _i \in \Srs } 
\frac{ \br ^{\ellell } }{| y _i |^{\ellell } - \br ^{\ellell }  }
.\end{align}
Here we used the formula 
$ \sum_{\ell = \ellell }^{\infty } { a^{\ell } }/{b^{\ell }} = 
 {a^{\ellell }}/({b^{\ellell } - a^{\ellell }  })$ valid for $ 0< a\le b $ in the last line. 
If $ a=b$, then we interpret $ \sum_{\ell = \ellell }^{\infty } { a^{\ell } }/{b^{\ell }} = \infty $. 

Combining \eqref{:62z} and \eqref{:62zz} completes the proof of 
\lref{l:62}.  
\end{proof}

Taking \eqref{:63i}, \eqref{:62d}, and \eqref{:63k}  into account, 
we set for $ r , \ak ,  \ell \in \mathbb{N} $ 
\begin{align} \label{:63x} &
\mathsf{U}_{r,1,k} = \{ \mathsf{y}\in \mathsf{S} \, ;\, \sups \supN |\mmmm |  \le \ak \} 
,\\ \label{:63y} &
\mathsf{U}_{r,\ell ,k} = \{ \mathsf{y}\in \mathsf{S} \, ;\,  
 \sups |\9 | \le \ak \} \quad \text{if }2 \le \ell 
,\\
\label{:63z} &
\bar{\mathsf{U}}_{r,\ell ,k} = \{ \mathsf{y}\in \mathsf{S} \, ;\, 
 \{ \sum_{\y _i \in \Sri } \frac{1}{| y _i |^{\ell }- \br ^{\ell } } \} 
\le \ak \} 
.\end{align}
We introduce the new condition \thetag{H.5}. 

\medskip
\noindent
\thetag{H.5} For each $ r \in \mathbb{N}   $, there exists 
an $ \ellell$  such that $ 2 \le \ellell \in  \mathbb{N}   $ and that 
\begin{align} 
\label{:63p} &
\limi{k} \limsup_{ \n \to \infty } 
\muN (\bar{\mathsf{U}}_{r,\ellell ,k} ^c ) = 0 
,\\ 
\label{:63q} & 
\limi{k} \limsup_{ \n \to \infty } \muN (\mathsf{U}_{r,\ell ,k} ^c ) = 0 
\quad \text{ for all $  1 \le \ell < \ellell $}
.\end{align}

We now state the main theorem of this section. 
\begin{thm}	\label{l:63}
Assume \eqref{:22a} and $  S = \mathbb{C}$. 
Then \thetag{H.5} implies \thetag{H.3}. 
\end{thm}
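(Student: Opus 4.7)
The strategy is to reduce everything to \lref{l:62}: that lemma already bounds the Lipschitz seminorm of $\wPsiNrs(\cdot,\mathsf{y})$ by three quantities that correspond one-to-one with the three controls supplied by \thetag{H.5}. The genuine analytic work (the Taylor expansion) is done there, and what remains is a notation check followed by a union-bound argument.

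First I would reconcile notation. Via the identification $\mathbb{R}^2\cong\mathbb{C}$ one has $x\cdot(\mmr-\mms)=\Re[x(\bar{\mathsf{m}}^{\n}_r-\bar{\mathsf{m}}^{\n}_s)]$, so the function $\widetilde{\Psi}^{\n}_{rs}$ of \eqref{:63i} coincides with $\wPsiNrs(x,\mathsf{y})$ from \eqref{:21o} in the single-particle case $\mathsf{x}=\delta_x$. \lref{l:62} then gives, for every $s>r$ and every $\n$,
\begin{align*}
\sup_{x\ne w\in\Sr}\frac{|\wPsiNrs(x,\mathsf{y})-\wPsiNrs(w,\mathsf{y})|}{|x-w|}\le|\mmmm|+\cref{;6x}\sum_{\ell=2}^{\ellell-1}|\vellrs(\mathsf{y})|+\cref{;6y}\sum_{y_i\in\Srs}\frac{\br^{\ellell}}{|y_i|^{\ellell}-\br^{\ellell}}.
\end{align*}
Next I would take $\sup_{s>r}\sup_\n$ of both sides. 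The first summand is already controlled by $\mathsf{U}_{r,1,k}$ of \eqref{:63x} (which incorporates both suprema), and each middle summand by $\mathsf{U}_{r,\ell,k}$ of \eqref{:63y} (which is $\n$-independent for $\ell\ge 2$). The key observation for the tail term is a monotonicity in $s$: the partial sum $\sum_{y_i\in\Srs}\br^{\ellell}/(|y_i|^{\ellell}-\br^{\ellell})$ is nondecreasing in $s$, so its supremum over $s$ equals $\br^{\ellell}\sum_{y_i\in\Sri}1/(|y_i|^{\ellell}-\br^{\ellell})$, which is precisely the expression truncated by $\bar{\mathsf{U}}_{r,\ellell,k}$ in \eqref{:63z}.

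Combining these estimates, on the intersection
\begin{align*}
\mathsf{V}_{r,k}:=\mathsf{U}_{r,1,k}\cap\Big(\bigcap_{\ell=2}^{\ellell-1}\mathsf{U}_{r,\ell,k}\Big)\cap\bar{\mathsf{U}}_{r,\ellell,k}
\end{align*}
the seminorm defining $\Hrk$ in \eqref{:21f} is bounded by $Ck$ for a constant $C=C(r,\ellell,\beta)$ independent of $\n,s,k$. Hence $\mathsf{V}_{r,k}\subset\mathsf{H}_{r,Ck}$, and a union bound yields
\begin{align*}
\muN(\mathsf{H}_{r,Ck}^c)\le\muN(\mathsf{U}_{r,1,k}^c)+\sum_{\ell=2}^{\ellell-1}\muN(\mathsf{U}_{r,\ell,k}^c)+\muN(\bar{\mathsf{U}}_{r,\ellell,k}^c).
\end{align*}
Taking $\limsup_\n$ followed by $\lim_k$, the right-hand side vanishes by \thetag{H.5}. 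Since $\mathsf{H}_{r,k}^c$ is decreasing in $k$ and $Ck\to\infty$, this yields \eqref{:21g}. The auxiliary condition \eqref{:21h} constrains the same sequence $\{\mms\}$ that already appears inside $\mmmm$ in the definition of $\mathsf{U}_{r,1,k}$, and so it is part of the input to \thetag{H.5} rather than something to be derived.

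The main obstacle is precisely the tail term. The supremum over $s$ hidden in the definition of $\Hrk$ forces one to control arbitrarily large annuli simultaneously, and the monotonicity-in-$s$ observation is what allows a single infinite-volume condition ($\bar{\mathsf{U}}_{r,\ellell,k}$) to dominate all of them at once; without it one would need a separate hypothesis for every $s$. Every other step is a direct application of \lref{l:62} together with a countable union bound.
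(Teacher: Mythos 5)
Your proposal is correct and follows essentially the same route as the paper's proof: invoke \lref{l:62}, replace the sum over $\Srs$ in the tail term by the sum over $\Sri$ using positivity of the summands, match the three resulting quantities to the three families $\mathsf{U}_{r,1,k}$, $\mathsf{U}_{r,\ell,k}$ ($2\le\ell<\ellell$), $\bar{\mathsf{U}}_{r,\ellell,k}$, and conclude by a union bound together with \thetag{H.5}. The only cosmetic difference is that the paper distributes the threshold directly (taking $\mathsf{U}_{r,\ell,k/(\ellell c)}$ so that $\Hrk$ itself contains the intersection), whereas you establish $\mathsf{V}_{r,k}\subset\mathsf{H}_{r,Ck}$ and then appeal to monotonicity of $\Hrk$ in $k$; both are sound.
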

\begin{rem}\label{r:63}
If $ d=1 $, then 
\begin{align}\label{:63a}&
|\mmmm |   =  | \mathsf{v}_{1,rs}^{\n } (\mathsf{y} ) | 
.\end{align}
Hence \tref{l:63} is also valid for the proof of \tref{l:22}. 
In fact, we see that 
\begin{align}\label{:63b}&
\mathsf{U}_{r,1,k} = \{ \mathsf{y}\in \mathsf{S} \, ;\,  
 \sups \supN   | \mathsf{v}_{1,rs}^{\n } (\mathsf{y} ) |  \le \ak \}
.\end{align}
\end{rem}
\begin{proof} 
Set $ \Ct \label{;6z} = \cref{;6y} \br ^{\ellell } $. 
Then from \eqref{:62d} we deduce that 
\begin{align} & \label{:63e} 
\supN \sups \sup _{x\not=w\in\Sr }   
\frac
{|\widetilde{\Psi }^{\n }_{rs} (x,\mathsf{y}) -  \widetilde{\Psi }^{\n }_{rs} (w ,\mathsf{y}) |}
{| x -  w |} 
\\ \notag \le &
 \{ \supN \sups |\mmmm |  \}  
 + 
\cref{;6x} 
\sum_{\ell = 2}^{\ellell -1} \{ 
 \sups |\9 | \} 
+ 
\cref{;6z}\{ \sum_{\y _i \in \Sri } 
\frac{1}{| y _i | ^{\ellell } - \br ^{\ellell } }\} 
.\end{align}
Combining this with \eqref{:21f}, and \eqref{:63x}--\eqref{:63z}, we deduce that 
\begin{align*}
\Hrk & 
\supset \left\{ 
\bigcap _{\ell = 1}^{\ellell -1}
\mathsf{U}_{r,\ell ,k/(\ellell \cref{;6x})}\right\} 
\bigcap 
\bar{\mathsf{U}}_{r,\ellell ,k/(\ellell \cref{;6z})}
.\end{align*}
Hence, we obtain 
\begin{align}\label{:63g}&
\muN (\Hrk ^c) \le  
\left\{\sum_{\ell = 1}^{\ellell -1}  
\muN (\mathsf{U}_{r,\ell ,k/(\ellell \cref{;6x})} ^c)
\right\}
+ 
\muN (\bar{\mathsf{U}}_{r,\ellell ,k/(\ellell \cref{;6z})}^c)
.\end{align}
This together with \thetag{H.5} implies \eqref{:21g}, which completes the proof. 
\end{proof}

\section{Proof of \tref{l:22}} \label{s:7}
In this section, we complete the proof of  \tref{l:22}. 
For this we check the conditions of \thetag{H.5}. 
We begin with \eqref{:63p}, the first condition of \thetag{H.5}. 
\begin{lem} \label{l:71}
Assume $  S = \mathbb{C}$ and \thetag{H.2}. 
Then \eqref{:63p} follows from \eqref{:22d}. 
\end{lem}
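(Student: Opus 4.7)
The plan is to apply Markov's inequality to the random variable
\[
V^{\n}(\mathsf{y}) := \sum_{y_i \in \Sri} \frac{1}{|y_i|^{\ellell} - \br^{\ellell}},
\]
after splitting the sum according to the distance of the particles from the critical sphere $|y| = \br$. A direct Markov bound using the $1$-correlation function fails because the integrand $1/(|x|^{\ellell}-\br^{\ellell})$ has a non-integrable singularity at $|x|=\br$ (even in one dimension, and worse in $S = \mathbb{C}$), so $E_{\muN}[V^{\n}] = +\infty$ for every $\n$. The trick will be to handle the thin annulus near $|y|=\br$ probabilistically rather than in expectation.

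Concretely, I would fix $\epsilon > 0$ (ultimately $\epsilon = 1/k$) and write $V^{\n} = V_{\mathrm{near}}^{\n} + V_{\mathrm{mid}}^{\n} + V_{\mathrm{far}}^{\n}$, where the three pieces correspond to particles with $|y_i|$ lying in $[\br, \br+\epsilon)$, $[\br+\epsilon, 2\br]$, and $(2\br, \infty)$, respectively. For the far part I use the elementary inequality $|y|^{\ellell} - \br^{\ellell} \ge |y|^{\ellell}/2$ on $\{|y| \ge 2\br\}$ and apply Markov together with the definition of $\rNone$:
\[
\muN(V_{\mathrm{far}}^{\n} > k/3) \le \frac{6}{k} \int_{|x| > 2\br} \frac{\rNone(x)}{|x|^{\ellell}}\, dx = O(1/k)
\]
uniformly in $\n$ by \eqref{:22d}. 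For the mid part I use $|y|^{\ellell} - \br^{\ellell} \ge \ellell \br^{\ellell-1}(|y| - \br)$ together with the uniform local bound $\sup_{\n} \sup_{|x| \le 2\br} \rNone(x) < \infty$ provided by \thetag{H.2}\thetag{1} applied to $n = 1$ on the ball $\{|x| < 2\br + 1\}$. After integrating in polar coordinates this gives $E_{\muN}[V_{\mathrm{mid}}^{\n}] = O(\log(\br/\epsilon))$, hence $\muN(V_{\mathrm{mid}}^{\n} > k/3) = O(\log(1/\epsilon)/k)$. For the near part I discard the pointwise size and bound only the probability that the thin annulus contains any particle at all:
\[
\muN(V_{\mathrm{near}}^{\n} > 0) \le \int_{\br \le |x| < \br+\epsilon} \rNone(x)\, dx \le C\,\epsilon,
\]
once more using the uniform local bound on $\rNone$.

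Summing the three estimates and choosing $\epsilon = 1/k$ gives
\[
\muN(V^{\n} > k) \le \frac{C_1}{k} + \frac{C_2 \log(\br k)}{k} + \frac{C_3}{k},
\]
which tends to $0$ uniformly in $\n$ as $k \to \infty$, yielding \eqref{:63p}. The main obstacle, and the reason a one-step Markov bound fails, is precisely the singularity of $1/(|x|^{\ellell}-\br^{\ellell})$ at the inner boundary $|x|=\br$: it is resolved by the three-way split, in which the probabilistic (rather than $L^1$) control on the near region absorbs the singularity while the choice $\epsilon(k) = 1/k$ keeps the logarithmic divergence of $E_{\muN}[V_{\mathrm{mid}}^{\n}]$ from overwhelming the $1/k$ factor produced by Markov.
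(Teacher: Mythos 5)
Your proof is correct, but it takes a genuinely different route from the paper's. The paper splits $\Sri$ at the \emph{fixed} radius $b_{r+1}$ into $S_{r(r+1)}$ and $S_{(r+1)\infty}$; the tail piece over $S_{(r+1)\infty}$ is handled by Chebyshev plus \eqref{:22d} just as you handle your far region, but the near annulus $S_{r(r+1)}$ is not re-split. Instead the paper bounds the sum over $S_{r(r+1)}$ by the product
\[
\Big\{ \sup_{x_i\in S_{r(r+1)}}\frac{1}{|x_i|^{\ellell}-\br^{\ellell}} \Big\}\cdot \mathsf{x}(S_{r(r+1)}),
\]
and controls each factor by the threshold $\sqrt{k/2}$ (the sets $\mathsf{V}_{3,k}$ and $\mathsf{V}_{4,k}$). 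The complement of the sup-condition is exactly the event that some particle lies in a thin $k$-dependent annulus $U_k$ of width $O(1/\sqrt{k})$ near $\{|x|=\br\}$, whose probability is $O(1/\sqrt{k})$ by the uniform local bound on $\rho_{\n}^1$; the complement of the count-condition is handled by Chebyshev with the same local bound. You instead make the split point itself $k$-dependent at $\br + 1/k$, accept that a thin annulus of width $1/k$ may contain a particle with probability $O(1/k)$, and integrate the singularity over the mid range $[\br+1/k, 2\br]$ to get an explicit $O(\log(\br k))$ first-moment bound that Markov damps to $O(\log k/k)$. Both approaches correctly resolve the same obstruction (non-integrability of $1/(|x|^{\ellell}-\br^{\ellell})$ at $|x|=\br$, which, as you note, makes a one-step Markov bound impossible). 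The paper's sup-times-count factorization sidesteps the logarithmic integral entirely, producing a $k^{-1/2}$-type decay; your approach exhibits the logarithmic divergence explicitly and shows it is harmless against the $1/k$ from Markov. Each is a valid way to trade off the singularity against the decay rate, and both rely on the same two ingredients: \eqref{:22d} for the tail and the uniform local boundedness of $\rho_{\n}^1$ from \eqref{:21b} for the annulus near $\br$.
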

\begin{proof} 
Let $ \br $ be as in \eqref{:qg0}. We divide the set 
$ \Sri = \{ \br \le |x | < \infty \} $ in \eqref{:63z} into two parts 
$ \Rrr = \{ \br \le |x | < \brr \} $ and $ \Rrrr = \{ \brr \le |x | < \infty \} $. 
Let 
\begin{align*}& 
\mathsf{V}_{1,k}  = 
\{ \xx \in \mathsf{S} \, ;\, \{ 
\sum_{\x _i \in \Rrr  } \frac{1}{| x _i |^{\ellell }- \br ^{\ellell } } \} 
  \le \frac{k}{2}\}   
\\ \notag &
\mathsf{V}_{2,k}  = 
\{ \xx \in \mathsf{S} \, ;\, \{ 
\sum_{\x _i \in \Rrrr  } \frac{1}{| x _i |^{\ellell }- \br ^{\ellell } } \}  
  \le \frac{k}{2}\},  
  \quad \text{ where }  \xx =\sum_{i}\delta_{x _i}
. \end{align*}
Then clearly 
$ \bar{\mathsf{U}}_{r,\ellell ,k} \supset \mathsf{V}_{1,k}\cap \mathsf{V}_{2,k}$. 
To estimate $ \mathsf{V}_{1,k} $, we observe that for $ \xx =\sum_{i}\delta_{\x _i }$
\begin{align}& \notag 
\sum_{\x _i \in \Rrr }
\frac{1}{| x _i |^{\ellell }- \br ^{\ellell } } \le 
\{ 
\sup_{\x _i \in \Rrr }
\frac{1}{| x _i |^{\ellell }- \br ^{\ellell } } \}\cdot \xx (\Rrr )
.\end{align}
Here $ \xx (\Rrr )$ is the number of points $ \x _i $ in $ \Rrr $. 
Considering this, we set  
\begin{align*}&
\mathsf{V}_{3,k} =
\{ \xx \in \mathsf{S} \, ; \,  
\sup_{\x _i \in \Rrr }
\frac{1}{| x _i |^{\ellell }- \br ^{\ellell } } \le \sqrt{{\ak }/{2}}
\}, \quad 
\\ \notag &
\mathsf{V}_{4,k} = 
\{ \xx \in \mathsf{S} ;\,  \xx (\Rrr ) \le \sqrt{{\ak }/{2}} \}. 
\end{align*}
Then we have $ \mathsf{V}_{1,k} \supset \mathsf{V}_{3,k} \bigcap \mathsf{V}_{4,k}  $. 
We therefore obtain 
$\bar{\mathsf{U}}_{r,\ellell ,k} \supset \mathsf{V}_{2,k}\cap \mathsf{V}_{3,k}\cap \mathsf{V}_{4,k}$ 
by combining these two inclusions. Hence, we deduce \eqref{:63p} from 
\begin{align}\label{:71g}&
\limi{k} \limsup_{ \n \to \infty } \muN (\mathsf{V}_{l,k}^c) = 0 
\quad \text{ for all }l=2,3,4 
.\end{align}
We will check \eqref{:71g} for each $ l =2,3,4 $. 

As for \eqref{:71g} with $ l =2 $, according to the Chebyshev inequality, we have  
\begin{align}\label{:71h}
\muN (\mathsf{V}_{2,k}^c) &\le 
\frac{2}{k} E^{\muN }[
 \sum_{\x _i \in \Rrrr  } \frac{1}{| x _i |^{\ellell }- \br ^{\ellell }}  ] 
\\ \notag &
= 	\frac{2}{k} \int_{ \Rrrr } \{ \frac{1}{ | x  |^{\ellell }- 
 \br ^{\ellell }} \} \rNone (x)	 dx  
\\ \notag &  	= 
\frac{2}{k} 
\int_{ \Rrrr } \{ \frac{{| x  |^{\ellell }}}
{ | x  |^{\ellell } -  \br ^{\ellell }} 
\frac{1}{{| x  |^{\ellell }}} \}
\rNone (x)	 dx  
\\ \notag &
\le \frac{2}{k} 
\{ 
\frac{ \brr ^{\ellell }}
{ \brr ^{\ellell }-  \br ^{\ellell }}
\}
 \cdot  \int_{\Rrrr }
  \{ \frac{1 }{| x  |^{\ellell }} \}
  \rNone (x )   dx  
.\end{align}
Here we used the fact that 
$ {t ^{\ellell }}/\{t ^{\ellell }- \br ^{\ellell }\}$ 
is decreasing in $ t \in (\br ,\infty )$, 
which implies 
\begin{align*}&
\sup_{x  \in \Rrrr }
\frac
 {| x  |^{\ellell }}
 {| x  |^{\ellell }-  \br ^{\ellell }}
 \le  \frac
{ \brr ^{\ellell }}
{ \brr ^{\ellell }-  \br ^{\ellell }}
\end{align*}
By \eqref{:22d} and \eqref{:71h}, we obtain 
 \eqref{:71g} with $ l=2 $.

We next consider \eqref{:71g} with $ l =3 $. Let 
\begin{align*}&
 U_{k}  = 
\{ x \in \Rrr ; \br ^{\ellell }  \le |x | ^{\ellell } < \br ^{\ellell } + {\sqrt{2/\ak }}\} 
.\end{align*}
It is not difficult to see that $  U_{k} $ is non-increasing and 
$ \limi{k} U_{k}  = \emptyset $. We note that 
\begin{align}\label{:71j}
\mathsf{V}_{3,k}^c &= \{ \xx \in \mathsf{S} ; 
\inf_{\x _i \in \Rrr }\{ 	{| x _i |^{\ellell }- \br ^{\ellell } } \} <  \sqrt{2/\ak }  \} 
\\ \notag &  = 
\left\{ \xx \in \mathsf{S} ; \ 1 \le \xx ( U_{k}  ) \right\} 
.\end{align}
Here we use the convention such that $ \inf \emptyset = \infty $; 
that is, we interpret $ \xx \not\in \mathsf{V}_{3,k}^c  $ when $ \xx (\Rrr )=0$. 
Let $ \cref{;71k}= \sup  \{ \rNone (x ); \n \in  \mathbb{N}   , x \in \Rrr \} $. 
Then by  \eqref{:21b}, we have $ \Ct \label{;71k} < \infty $. 
From the second equality in \eqref{:71j} and the Chebyshev inequality, we obtain 
\begin{align}\label{:71k}&
\muN (\mathsf{V}_{3,k} ^c) 
 \le  E^{\muN }[\xx   ( U_{k}  )] 
 = \int_{ U_{k}  } \rNone (x )dx 
 \le   \cref{;71k}  \int_{ U_{k}  }dx  
.\end{align}
Hence, we deduce \eqref{:71g} with $ l =3 $ from \eqref{:71k} and $ \limi{k} U_{k} = \emptyset $. 

We finally consider \eqref{:71g} with $ l =4 $. 
From the Chebyshev inequality, we obtain 
\begin{align}&\notag 
\muN (\mathsf{V}_{4,k} ^c) \le  \sqrt{\frac{2}{\ak }}   E^{\muN }[\xx   (\Rrr )] = 
  \sqrt{\frac{2}{\ak }}    \int_{\Rrr } \rNone (x )dx  
\le   \sqrt{\frac{2}{\ak }}   \cref{;71k}   \int_{\Rrr }   dx  
.\end{align}
This immediately yields \eqref{:71g} with $ l =4 $. 
\end{proof}

We proceed with \eqref{:63q}, the second condition of \thetag{H.5}.

\begin{lem} \label{l:72}
Let the same assumptions as \lref{l:71} hold. 
Then  \eqref{:63q} follows from \eqref{:22e} and \eqref{:22f}. 
\end{lem}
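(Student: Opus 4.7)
The approach is to apply Chebyshev's inequality after bounding $\sup_{r<s<\infty}\sup_{\p}|\mathsf{v}_{\ell,rs}^{\p}|$ in $L^1(\muN)$ uniformly in $\n$, splitting the supremum into a finite-window piece controlled by \eqref{:22e} and a tail piece controlled by \eqref{:22f}. First, using \eqref{:63y} for $\ell\ge 2$ and \eqref{:63b} from Remark~\ref{r:63} for $\ell=1$, one has
\[
\mathsf{U}_{r,\ell,k}^c\subset\bigl\{\xx\in\SSS:\sup_{r<s<\infty}\sup_{\p\in\mathbb{N}}|\mathsf{v}_{\ell,rs}^{\p}(\xx)|>c\,k\bigr\}
\]
for a constant $c>0$ depending only on $\beta$ (the $\sup_{\p}$ is vacuous when $\ell\ge 2$). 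By Markov it therefore suffices to bound $\|\sup_{r<s<\infty}\sup_{\p}|\mathsf{v}_{\ell,rs}^{\p}|\|_{L^1(\muN)}$ by a quantity of the form $C(s_1)+\eta(s_1)$, with $\eta(s_1)\to 0$ as $s_1\to\infty$, uniformly in $\n$.

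Fix an auxiliary scale $s_1>r$. For $r<s\le s_1$ I use subadditivity, bounding $\sup_{r<s\le s_1}\sup_{\p}|\mathsf{v}_{\ell,rs}^{\p}|$ by the finite sum $\sum_{s=r+1}^{s_1}\sup_{\p}|\mathsf{v}_{\ell,rs}^{\p}|$; for $s>s_1$ I decompose $\mathsf{v}_{\ell,rs}^{\p}=\mathsf{v}_{\ell,rs_1}^{\p}+\mathsf{v}_{\ell,s_1 s}^{\p}$ and $\mathsf{v}_{\ell,s_1 s}^{\p}=\mathsf{v}_{\ell,s_1\infty}^{\p}-\mathsf{v}_{\ell,s\infty}^{\p}$, giving
\[
\sup_{r<s<\infty}\sup_{\p}|\mathsf{v}_{\ell,rs}^{\p}|\le\sum_{s=r+1}^{s_1}\sup_{\p}|\mathsf{v}_{\ell,rs}^{\p}|+\sup_{\p}|\mathsf{v}_{\ell,s_1\infty}^{\p}|+\sup_{s>s_1}\sup_{\p}|\mathsf{v}_{\ell,s\infty}^{\p}|.
\]
The finite sum has uniformly bounded $L^1(\muN)$-norm by \eqref{:22e}, and the middle term has $L^1$-norm at most $\eta(s_1)\to 0$ as $s_1\to\infty$ by \eqref{:22f}.

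The main obstacle is the last piece $\sup_{s>s_1}\sup_{\p}|\mathsf{v}_{\ell,s\infty}^{\p}|$, since \eqref{:22f} only controls $\sup_{\p}|\mathsf{v}_{\ell,s\infty}^{\p}|$ pointwise in $s$, not its supremum over $s>s_1$ inside the $L^1$-norm. I would handle it via the bound $|\mathsf{v}_{\ell,s\infty}^{\p}|\le|\mathsf{v}_{\ell,s_1\infty}^{\p}|+|\mathsf{v}_{\ell,s_1 s}^{\p}|$ for $s>s_1$, dominating $\sup_{s>s_1}|\mathsf{v}_{\ell,s_1 s}^{\p}|$ by the envelope $|\beta|\sum_{x_i\in S_{s_1\infty}}|x_i|^{-\ell}$ when $\ell\ge 2$, and combining this envelope with the uniform boundedness of $\{\mathsf{m}_s^{\p}\}$ from \eqref{:21h} when $\ell=1$. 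The $L^1(\muN)$-smallness of the envelope as $s_1\to\infty$ would be extracted from the tightness-type consequences of \eqref{:22f} together with the $\muN$-a.s.\ finite particle count $\xx(S)\le\nN$ in \thetag{H.2}\thetag{2}.

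Assembling the above gives $\muN(\mathsf{U}_{r,\ell,k}^c)\le C(s_1)/k+\eta(s_1)$ uniformly in $\n$; taking $\limsup_{\n\to\infty}$, then $k\to\infty$, removes the $C(s_1)/k$ term, and finally letting $s_1\to\infty$ sends $\eta(s_1)\to 0$, establishing \eqref{:63q}.
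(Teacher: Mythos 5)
Your reduction to bounding $\mathsf{U}_{r,\ell,k}^c$ via Chebyshev and splitting the $\sup_s$ is the right instinct, and the finite-window piece is fine. But the handling of the tail is a genuine gap, and the envelope argument you sketch cannot work as stated. The quantity $|\beta|\sum_{x_i\in \Ssi}|x_i|^{-\ell}$ (all moduli, no cancellation) is not what \eqref{:22f} controls: \eqref{:22f} bounds $|\mathsf{v}_{\ell,s\infty}^\p|$, which sums the \emph{complex} terms $x_i^{-\ell}$ (and, for $\ell=1$, includes the compensator $\bar{\mathsf{m}}_s^{\p}-\bar{\mathsf{m}}_\infty^{\p}$), so it is small precisely because of phase cancellation and compensation. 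In the Airy-type setting with $\ell=1$ the envelope $\sum_{x_i\in\Ssi}|x_i|^{-1}$ is generically divergent, and the a.s.\ bound $\xx(S)\le\nN$ does not save you because $\nN$ grows with $\n$. So there is no way to extract the $L^1$-smallness of your envelope from \eqref{:22f} and \thetag{H.2}\thetag{2}, and the obstacle you correctly flag remains open in your argument.

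The paper closes this by two ideas you did not use. First, the definition of the quasi-Gibbs property leaves the increasing sequence $\{\br\}$ at our disposal, so one may \emph{re-choose} $\{\br\}$ using \eqref{:22f} so that
$\supN\|\supP|\vellbri^{\p}|\|_{\LmNone}\le c\,3^{-r}$ for all $r$ and all $1\le\ell<\ellell$ (this is \eqref{:72c}). Second, rather than trying to bound $\sup_{s>r}$ \emph{inside} the $L^1$-norm, write $\mathsf{v}_{\ell,\br\bs}=\vellbri-\vellbsi$, split $\muN(\mathsf{U}_{r,\ell,k}^c)\le\muN(\supP|\vellbri^\p|>k/2)+\muN(\sups\supP|\vellbsi^\p|>k/2)$, and then apply a \emph{union bound over $s$} to the second term followed by Chebyshev term by term:
\begin{align*}
\muN(\sups\supP|\vellbsi^\p|>k/2)
\le\sum_{s>r}\muN(\supP|\vellbsi^\p|>k/2)
\le\frac{2}{k}\sum_{s>r}\|\supP|\vellbsi^\p|\|_{\LmNone}.
\end{align*}
The geometric decay from the re-chosen $\{\br\}$ makes the series converge, giving $\supN\muN(\mathsf{U}_{r,\ell,k}^c)\le C\,3^{-r}/k$, which yields \eqref{:63q} as $k\to\infty$. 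This sidesteps the $L^1$-of-sup problem completely; you should restructure along these lines rather than trying to dominate the sup by a positive envelope.
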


\begin{proof}
By \eqref{:22f}, we can and do choose $ \{\br \}$ and 
$ \Ct \label{;61} > 0 $ in such a way that 
\begin{align} \label{:72c}&
\supN \| \supP |\vellbri ^{\mathsf{p}} | \|_{\LmNone } \le \cref{;61}3^{- r }
\quad \text{ for all } r \in  \mathbb{N} ,\ 1\le \ell < \ellell 
.\end{align}
We note that 
$ \mathsf{v}_{\ell ,\br \bs } (\xx ) = \vellbri (\xx )- \vellbsi (\xx )$. 
Then by \eqref{:63y}, we see that 
\begin{align} \label{:72d}
\muN (\{ \mathsf{U}_{r,\ell ,k} \}^c) = & 
	\muN ( \sups \supP | \vellbri ^{\mathsf{p}} - \vellbsi ^{\mathsf{p}}|> k )
\\ \notag \le & 
\muN (  \supP | \vellbri ^{\mathsf{p}}|> \frac{k}{2}) + 
	\muN ( \sups \supP | \vellbsi ^{\mathsf{p}}|> \frac{k}{2})
\\ \notag \le & 
\muN (  \supP | \vellbri ^{\mathsf{p}}|> \frac{k}{2}) +
	\sum_{s = r+1}^{\infty} \muN (  \supP |\vellbsi ^{\mathsf{p}}|> \frac{k}{2})
\\ \notag \le &
 \frac{2}{k}\cdot \{ \sum_{s = r}^{\infty} \| 
 \supP |\vellbsi ^{\mathsf{p}} | \|_{\LmNone }\}
.\end{align}
Here we used the Chebyshev inequality in the last line. 
By \eqref{:72c} and \eqref{:72d}, we have 
\begin{align*}&
\supN \muN (\{\mathsf{U}_{r,\ell ,k}\}^c) 
\le 
\frac{2}{k}\cdot \frac{\cref{;61} 3^{-r}}{1-3^{-1}}
.\end{align*}
Hence, $ \limi{k}\supN \muN (\{\mathsf{U}_{r,\ell ,k}\}^c) =0$, 
which implies \eqref{:63q}. 
\end{proof}

\noindent 
{\em Proof of \tref{l:22}. } 
From \lref{l:71} and \lref{l:71}, we deduce that 
the assumption \thetag{H.5} in \tref{l:63} holds. 
Hence from \tref{l:63}, we obtain \thetag{H.3}. 
Therefore \tref{l:22} follows from \tref{l:21}.   
\qed





\small{

}

\end{document}